\newcommand{\im}{\operatorname{Im}}
\newcommand{\X}{\operatorname{X}}
\newcommand{\B}{\operatorname{\mathbb{E}}}
\newcommand{\id}{\operatorname{id}}
\newcommand{\sides}{\operatorname{sides}}
\newcommand{\Span}{\operatorname{span}}
\newcommand{\type}{\operatorname{type}}
\newcommand{\bipartite}{\operatorname{bipartite}}
\newcommand{\supp}{\operatorname{supp}}
\begin{document}
\newtheorem{theorem}{Theorem}[section]
\newtheorem{lemma}[theorem]{Lemma}
\newtheorem{claim}[theorem]{Claim}
\newtheorem{proposition}[theorem]{Proposition}
\newtheorem{corollary}[theorem]{Corollary}
\theoremstyle{definition}
\newtheorem{definition}[theorem]{Definition}
\newtheorem{observation}[theorem]{Observation}
\newtheorem{example}[theorem]{Example}
\newtheorem{remark}[theorem]{Remark}

\title[Banach \.Zuk's criterion and random groups]{Banach \.Zuk's criterion for partite complexes with application to random groups}
\author{Izhar Oppenheim}
\newcommand{\Addresses}{{
  \bigskip
  \footnotesize

  IZHAR OPPENHEIM, \textsc{Department of Mathematics, Ben-Gurion University of the Negev, Be'er Sheva 84105, Israel}\par\nopagebreak
  \textit{E-mail address:} \texttt{izharo@bgu.ac.il}
}}

\maketitle
\begin{abstract}
We prove a Banach version of \.Zuk's criterion for groups acting on partite simplicial complexes. Using this new criterion we derive a new fixed point theorem for random groups in the Gromov density model with respect to several classes of Banach spaces ($L^p$ spaces, Hilbertian spaces, uniformly curved spaces).  In particular, we show that for every $p$, a group in the Gromov density model has asymptotically almost surely property $(F L^p)$ and give a sharp lower bound for the growth of the conformal dimension of the boundary of such group as a function of the parameters of the density model.
\end{abstract} 

\section{Introduction}

Fixed point properties of groups is a general theme is the study of groups.  For instance, for a discrete group, both amenability and Kazhdan's property (T) are equivalent to fixed point properties. This paper will focus on the study of fixed point properties of a group when acting on (a class of) Banach spaces. Explicitly, the following definition was given in Bader at el.  \cite{BFGM} as a generalization of property $(FH)$: Given a Banach space $\B$ and a group $G$, we say that $G$ has property $(F \B)$ if every continuous affine isometric action of $G$ on $\B$ has a fixed point. More generally, given a class of Banach spaces $\mathcal{E}$, we say that $G$ has property $(F \mathcal{E})$ if it has $(F \B)$ for every $\B \in \mathcal{E}$.  In this notation, property $(FH)$ is property $(F \mathcal{E}_{\text{Hilbert}})$, where $\mathcal{E}_{\text{Hilbert}}$ is the class of all Hilbert spaces.  Bader at el.  \cite{BFGM} also studied property $(F L^p)$ which will be considered in this article: Using the notation above,  property $(F L^p)$ is property $(F \mathcal{E}_{L^p})$ where $\mathcal{E}_{L^p}$ is the class of all $L^p$ spaces.

This paper was motivated by the questions regarding Banach fixed point properties for random groups.  Banach fixed properties for random groups was studied in \cite{DrutuM, LaatSalle, OppGarBan} - in all these works, the results regarding random groups in the triangular model were rather strong, e.g., all these works show that for every $2 \leq p < \infty$, a random group in the triangular model has property $(FL^p)$ asymptotically almost surely.  In contrast,  the results for random groups in the density model were much less satisfactory - \cite{LaatSalle, OppGarBan} did not address this model at all and in \cite{DrutuM} the results in this model were rather weak (see exact results below).  It was conjectured by Dru\c{t}u and Mackay \cite{DrutuM} that property $(FL^p)$ for every $2 \leq p < \infty$ should hold also in the density model.  The problem with generalizing the approach of the author in \cite{OppGarBan} to the density model was the lack of a  \.Zuk style criterion in the Banach setting for groups acting on partite complexes (see more on this below).  

The main results of this paper are: First, establishing a Banach \.Zuk criterion for groups acting on partite complexes (Theorem \ref{Zuk type thm intro}) and second, applying this criterion for random groups in the density model (Theorem \ref{f.p. density model - intro}). As a consequence, we also derive a sharp bound on the growth of the conformal dimension of the boundary of a random group in the density model (Theorem \ref{conformal dim growth thm}).

\subsection{Banach \.Zuk's criterion for groups acting on partite simplicial complexes}

In  \cite{Gar}, Garland showed how to prove vanishing of real cohomology for a group acting on an affine building via studying the local geometry of the building.  This approach was later generalized by various authors to vanishing of cohomology with Hilbert coefficients for groups acting on simplicial complexes - see \cite{Zuk, BS,DJ1, DJ2, OppWeighted}. We recall that vanishing of the first cohomology with Hilbert coefficients is equivalent to property (FH). Later, this approach was also applied in the Banach setting in various levels of effectiveness/success - see  \cite{Bourd, OppFP, Nowak, OppRobust,  OppVanBan,DrutuM, LaatSalle, OppGarBan}.  

The idea behind this approach is the following local to global argument: Given a group $G$ acting geometrically on a simplicial complex $X$, one can deduce fixed point properties of $G$ by geometric conditions on the links on $X$.  In dimension $2$, this approach is also known as \.Zuk's criterion.   Recently, there were also computer assisted proofs for property (T) and property (FH) that refined this idea and gave geometric proofs studying the geometry of $2$-balls around vertices (while \.Zuk's criterion studies the geometry of $1$ balls) - see \cite{AutFn1, AutFn2, AutFn3}.

In order to explicitly state \.Zuk's criterion and our generalization of it, we will need some terminology of spectral graph theory.  Given a finite graph $(V,E)$, denote $m(v)$ to be the valency of $v \in V$. We also define $\ell^2 (V,m)$ as to be the space of functions $\phi: V \rightarrow \mathbb{C}$ with an inner-product
$$\langle \phi, \psi \rangle = \sum_{\lbrace v \rbrace \in V} = m(v) \phi (v) \overline{\psi (v)}.$$

The \textit{simple random walk} on $(V,E)$ as above is the operator $A: \ell^2 (V,m) \rightarrow \ell^2 (V,m)$ defined as  
$$(A \phi) (v) = \sum_{u \in V, \lbrace u, v \rbrace \in E} \frac{m (\lbrace u, v \rbrace)}{m(v)} \phi (u).$$
With the inner-product above, this is a self-adjoint operator and its spectrum is contained in $[-1,1]$.  We recall that the space of constant functions is an eigenspace of $A$ with eigenvalue $1$ and, if $(V,E)$ is connected,  then all other the other eigenvalues of $A$ are strictly less than $1$.

The Hilbert (classical) version of \.Zuk's criterion goes as follows:
\begin{theorem}[\cite{BS, Zuk} for $n =2$, \cite{OppWeighted} for all $n \geq 2$]
Let $G$ be a locally compact, unimodular group and $X$ be a pure $n$-dimensional simplicial complex such that $X$ is gallery connected and all the $1$-dimensional links of $X$ are connected.  Assume that $G$ is acting on $X$ by simplicial automorphisms and the the action is proper and cocompact.  If for every $1$-dimensional link of $X$, the second largest eigenvalue of the simple random walk on the link is $< \frac{1}{n+1}$, then $G$ has property $(FH)$ (and property (T)).
\end{theorem}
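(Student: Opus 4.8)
The plan is to run the classical Garland--\.Zuk local-to-global argument (\cite{Gar,Zuk,BS,OppWeighted}), which is the Hilbertian template for the Banach generalisation developed below. First I would reduce to a cohomological statement. Since $X$ is pure and gallery connected it is connected, so $G$ --- acting properly and cocompactly on a connected complex --- is compactly generated, hence $\sigma$-compact; by the Delorme--Guichardet theorem, for such $G$ property $(T)$ and property $(FH)$ coincide, and $(FH)$ is equivalent to $H^1(G,\pi)=0$ for every orthogonal representation $\pi$ on a Hilbert space $\mathcal H$. So I fix an affine isometric action of $G$ on $\mathcal H$ with linear part $\pi$ and cocycle $b$. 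Properness makes every vertex stabiliser $G_v$ compact, so its affine action on $\mathcal H$ has a fixed point; choosing one such point per vertex orbit and propagating by equivariance yields $f\colon X(0)\to\mathcal H$ with $f(gv)=\pi(g)f(v)+b(g)$, and its coboundary $\phi(u,v)=f(v)-f(u)$ is a $G$-equivariant simplicial $1$-cocycle (the affine term cancels in the difference). The key observation is that if $\phi$ is the coboundary of some genuinely linearly $G$-equivariant $0$-cochain $g$, then $\delta(f-g)=0$ forces $f-g$ to be a constant vector $\xi$ on the connected $X$, and unwinding the two equivariances gives $b(\cdot)=(\id-\pi(\cdot))\xi$, i.e. $b$ is a coboundary; so it suffices to show $\phi\in\im\delta^0$ in the appropriate $L^2$-complex.

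To set up that complex I would fix $G$-invariant weights $m$ on the simplices satisfying the standard balancing condition, and form the Hilbert spaces $C^k$ of square-summable $G$-equivariant $\mathcal H$-valued $k$-cochains (the orbit of $\sigma$ weighted by $m(\sigma)/\mu(G_\sigma)$); cocompactness makes each $C^k$ a finite direct sum of copies of subspaces of $\mathcal H$, and unimodularity makes the coboundary maps $\delta^k$ have honest bounded adjoints $(\delta^k)^*$. With $\Delta_k^+=(\delta^k)^*\delta^k$, $\Delta_k^-=\delta^{k-1}(\delta^{k-1})^*$ and $\Delta_k=\Delta_k^++\Delta_k^-$, the Hodge decomposition for this finite complex reduces the whole problem to a single estimate: a strictly positive lower bound for $\Delta_1$ on the orthogonal complement of $\im\delta^0$ --- this simultaneously yields closedness of $\im\delta^0$ and vanishing of the harmonic $1$-cochains, hence $\phi\in\im\delta^0$.

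The crux --- where essentially all the work lies, and which I expect to be the main obstacle --- is Garland's local-to-global estimate. Localising a cochain $\phi$ at an $(n-2)$-face $\tau$ to the function $\phi_\tau$ on the graph link $X_\tau$, one rewrites $\|\phi\|^2$ and $\langle\Delta_1^{\pm}\phi,\phi\rangle$ as sums over $\tau\in X(n-2)$ of the quadratic forms of the simple random walk $A_{X_\tau}$ (equivalently the $0$-Laplacian $\id-A_{X_\tau}$) on $X_\tau$, up to controlled diagonal error terms, with combinatorial constants producing exactly the threshold $\tfrac1{n+1}$. Feeding in the hypothesis $\lambda_2(X_\tau)<\tfrac1{n+1}$ --- equivalently $\id-A_{X_\tau}\ge\tfrac{n}{n+1}$ on the orthogonal complement of the constants, the connectedness of the $1$-dimensional links being exactly what guarantees that this complement is the whole non-harmonic part --- and then inducting on codimension so as to propagate a spectral gap from the graph links of codimension-two faces up through the links of every lower-codimension face to $X$ itself, one obtains $\varepsilon>0$ with $\langle\Delta_1\phi,\phi\rangle\ge\varepsilon\|\phi\|^2$ for all $\phi\perp\im\delta^0$. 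The two ingredients here --- the exact combinatorial localisation identity and the codimension induction --- are precisely what must be re-engineered over Banach coefficients (and for partite complexes) in the body of the paper; over Hilbert coefficients both are classical.

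Finally, applying this estimate to the component of $\phi=\delta f$ orthogonal to $\im\delta^0$ forces it to vanish, so $\phi\in\im\delta^0$; by the first paragraph this exhibits $b$ as a coboundary, the affine action has a fixed point, and $G$ has property $(FH)$, hence property $(T)$.
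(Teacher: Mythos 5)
The paper does not prove this theorem; it is quoted as background and attributed to \cite{BS,Zuk} (for $n=2$) and \cite{OppWeighted} (general $n$), so there is no ``paper's proof'' to compare against. Your sketch accurately reproduces the standard Garland--\.Zuk route used in those references: the Delorme--Guichardet reduction to $H^1(G,\pi)=0$, the construction of an equivariant $0$-cochain via fixed points of the compact vertex stabilisers, the weighted $L^2$-complex of equivariant cochains and its Hodge decomposition, the Garland localisation of $\langle\Delta_1\phi,\phi\rangle$ to the $1$-dimensional links, and the codimension descent that pushes the spectral gap from codimension-two links through all intermediate links to $X$. You deliberately leave the central localisation identity and the codimension induction as black boxes, which is reasonable here --- these are classical over Hilbert coefficients, and (as you observe) the paper's actual contribution is re-engineering precisely these two ingredients over Banach coefficients and for partite complexes, which is done in \S\ref{Zuk criterion sec} by a rather different mechanism (angles between type-selective averaging projections and Kazhdan projections, rather than the Hodge/Laplacian estimate you sketch).
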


We note that in the Hilbert case, the criteria stated above need only a bound on the second largest eigenvalue of the links. However, when generalizing this criterion to the Banach setting, one usually needs to bound the norm of the random walk operator in order to get a workable version - see \cite{LaatSalle, OppGarBan}.  Explicitly,  for a finite graph $(V,E)$ and a Banach space $\B$, we define $\ell^2 (V, m ; \B)$ to be the space of functions $\phi: V \rightarrow \B$ with the norm 
$$\Vert \phi \Vert^2 = \sum_{v \in V} m(v) \vert \phi (v) \vert_{\B}^2.$$
Define $A \otimes \id_{\B} : \ell^2 (V, m ; \B) \rightarrow \ell^2 (V, m ; \B)$ as above and $M \otimes \id_{\B} : \ell^2 (V, m ; \B) \rightarrow \ell^2 (V, m ; \B)$ as 
$$M \phi \equiv \frac{1}{\sum_{v \in V} m(v)} \phi (v).$$
Denote $\lambda_{(V,E)}^{\B} = \Vert (A (I-M)) \otimes \id_{\B} \Vert_{B (\ell^2 (V, m ; \B))}$.  Note that in the Hilbert setting (where $\B = \mathcal{H}$), bounding $\lambda_{(V,E)}^{\mathcal{H}}$ is equivalent to bounding the non-trivial spectrum of the operator from above and below.   In \cite{OppGarBan}, the author showed the following:
\begin{theorem}\cite[Theorem 4.4]{OppGarBan}
Let $G$ be a unimodular, locally compact group that acts properly and cocompactly on a locally finite, pure $2$-dimensional simply connected simplicial complex $X$ with connected $1$-dimensional links.   For any reflexive Banach space $\B$, if 
$$\max_{v \in X(0)} \lambda_{X_v}^{\B} < \frac{1}{2}$$
then $G$ has property $(F \B)$.
\end{theorem}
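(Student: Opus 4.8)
The plan is to run Garland's local-to-global scheme for $X$, replacing each appeal to orthogonality, adjoints or the spectral theorem — none of which is available over a general reflexive $\B$ — by operator-norm estimates for the link random walks together with the mean ergodic theorem. The first step is a reduction to cohomology on $X$: an affine isometric action of $G$ on $\B$ is described by a pair $(\pi,b)$ with $\pi$ an isometric representation and $b\in Z^1(G,\pi)$ a continuous cocycle, and it has a fixed point exactly when $b\in B^1(G,\pi)$, so $(F\B)$ is equivalent to $H^1(G,\pi)=0$ for every isometric representation $\pi$ of $G$ on $\B$. Since $G$ acts properly and cocompactly on the locally finite $X$, every cell stabilizer is compact, averaging over compact groups kills positive-degree cohomology, and a Hochschild--Serre-type transfer for the action identifies $H^1(G,\pi)$ with the first cohomology of the complex $C^0\xrightarrow{d^0}C^1\xrightarrow{d^1}C^2$ of $G$-equivariant simplicial $\B$-valued cochains on $X$; here simple connectedness of $X$ is exactly what forces $H^1(X;\B)=0$ and hence makes this transfer an isomorphism in degree $1$, while cocompactness makes each $C^i$ a finite direct sum, over orbits of $i$-simplices, of stabilizer-invariant subspaces of $\B$. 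The goal becomes $\Ker d^1=\im d^0$.

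Next I would localize. Give $C^i$ the weighted $\ell^2$-norm coming from the weights $m$ (an $i$-simplex carrying the number of triangles that contain it). For a $1$-cochain $f$ and a vertex $v$, the map $u\mapsto f(\{v,u\})$ is an element $f_v\in\ell^2(X_v(0),m;\B)$, and the restriction of $d^1f$ to the triangles through $v$ is precisely the link-coboundary of $f_v$ in the graph $X_v$, which is connected, so that $M\otimes\id_{\B}$ is exactly the projection onto the space of $A_{X_v}$-invariant cochains. Garland's bookkeeping identity then writes the global norms of $f$ and $d^1f$ as weighted sums over $v\in X(0)$ of the local quantities for $f_v$ on the links, each edge contributing to the two links of its endpoints; over a Hilbert space this is Garland's formula expressing the upper Laplacian on $1$-cochains as an average of link Laplacians, and over $\B$ one retains the norm inequalities it yields.

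The heart of the proof is producing a primitive for a cocycle. Given $f\in Z^1=\Ker d^1$, set $c_f(v)=(M\otimes\id_{\B})f_v\in\B$, the weighted mean of the radial values of $f$ at $v$; since $M\otimes\id_{\B}$ commutes with the $\operatorname{Stab}(v)$-action, $c_f$ is a genuine equivariant $0$-cochain. This is where reflexivity is used: the mean ergodic theorem guarantees that the averaging operators involved are the bounded projections onto the constants that the argument requires. Put $Tf:=f-d^0c_f\in Z^1$. Using the cocycle condition $d^1f=0$ to rewrite the horizontal values $f(\{u,w\})$ inside each link $X_v$ as differences $f_v(w)-f_v(u)$ of the radial ones, the localization bounds $\Vert Tf\Vert$ by a constant multiple of the weighted $\ell^2$-sum over $v$ of $\Vert(A_{X_v}(I-M_{X_v}))\otimes\id_{\B}\,(f_v)\Vert$, hence by a constant multiple of $\big(\max_{v\in X(0)}\lambda_{X_v}^{\B}\big)\Vert f\Vert$, the $2$-dimensional bookkeeping producing the constant $2$. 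So the hypothesis $\max_v\lambda_{X_v}^{\B}<\tfrac12$ makes $T$ a strict contraction of $Z^1$; then $I-T$ is invertible on $Z^1$, while $(I-T)f=d^0c_f\in B^1$ for every $f\in Z^1$, whence $Z^1=(I-T)(Z^1)\subseteq B^1$. Therefore $H^1(C^\bullet(X;\B)^G)=0$ and $G$ has property $(F\B)$.

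The step I expect to be the main obstacle is this last one. In a Hilbert space one diagonalizes each $A_{X_v}$ and reads off at once that a harmonic cocycle is locally constant, hence constant; over $\B$ one must instead build the primitive explicitly by the averaging device above, and must verify — using only operator norms and the mean ergodic decomposition $\ell^2(X_v(0),m;\B)=\im(M\otimes\id_{\B})\oplus\overline{\im\big((I-M)\otimes\id_{\B}\big)}$ — that the two-sided bound $\lambda_{X_v}^{\B}<\tfrac12$, weaker than the optimal Hilbert threshold, is exactly what pushes the contraction constant below $1$. Getting the bookkeeping constants right, so that an edge lying in two vertex links contributes precisely the factor $2$ and nothing worse, is where the real care is needed.
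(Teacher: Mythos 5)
The paper does not prove this statement; it cites it from \cite{OppGarBan} and, in \cref{Zuk criterion sec}, proves a cousin of it for \emph{partite} complexes by a method that is quite different from yours. There one works in $C(X(n),\pi)$ (equivariant \emph{top}-degree cochains), defines averaging projections $P_\nu^\pi$, bounds their pairwise angles by link-operator norms, and invokes the angle criterion (Theorem~\ref{angle criterion thm}) to produce a Kazhdan projection, from which property $(F\mathcal{E})$ follows via Proposition~\ref{robust implies fp prop}; the introduction makes clear that \cite{OppRobust,OppGarBan} run the same kind of argument. Your proposal instead works in $C^1$, reduces to $Z^1=B^1$, and tries to prove $Z^1=B^1$ by showing that $Tf:=f-d^0c_f$, with $c_f(v)=(M\otimes\id_\B)f_v$, is a strict contraction of $Z^1$ with constant $2\max_v\lambda^\B_{X_v}$. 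That is a genuinely different route, and it is at the contraction step that your argument breaks.

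Concretely, for an edge $\{v,w\}$ you have
$$
(Tf)(v,w)=f(v,w)+c_f(v)-c_f(w),
$$
and $c_f(w)$ is a \emph{global} average over \emph{all} edges at $w$. What the link operator at $v$ actually controls, after using the cocycle identity $f(v,u)=f(v,w)+f(w,u)$ on triangles $\{u,v,w\}$, is
$$
\bigl(A_{X_v}(I-M_{X_v})f_v\bigr)(w)=f(v,w)+c_f^{\,v}(w)-c_f(v),\qquad c_f^{\,v}(w)=\frac{1}{m(\{v,w\})}\sum_{u:\{u,v,w\}\in X(2)}f(w,u),
$$
where $c_f^{\,v}(w)$ is a \emph{partial} average at $w$ restricted to common neighbours of $v$ and $w$. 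The cocycle condition rewrites horizontal values inside $X_v$ as differences of radial ones, but it cannot turn $c_f(w)$, which sums over the entire star of $w$, into a quantity visible from $X_v$. So the asserted localization $\Vert Tf\Vert^2\lesssim\sum_v\Vert(A_{X_v}(I-M_{X_v}))\otimes\id_\B\,(f_v)\Vert^2$ simply does not follow, and the key estimate $\Vert Tf\Vert\le 2(\max_v\lambda^\B_{X_v})\Vert f\Vert$ is unsupported. There is also an orientation/sign issue you do not pin down: with $c_f(v)$ the radial average of $f(v,\cdot)$ (rather than $-\delta^0f(v)$), one checks on the tetrahedron that $Tf=\tfrac{7}{3}f$ for every $f\in B^1$, so this $T$ \emph{expands}; with the other sign $T=I-\Delta^-$, but then a Garland-type identity for $\Delta^-$ is needed and is not supplied. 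Finally, reflexivity does not enter where you say it does: the averaging operator $M\otimes\id_\B$ on a finite link is a bounded projection onto constants on \emph{any} Banach space, with no ergodic theorem required; the hypothesis is used elsewhere (in the Kazhdan-projection mechanism), so as it stands your sketch does not account for the reflexivity assumption in the statement.
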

 
The main issue with using the operator norm of the random walk instead to the second largest eigenvalue is that in some interesting examples the simplicial complex $X$ is partite.  An $n$-dimensional simplicial complex is called \textit{partite} (or colorable) if its vertices can be colored in $n+1$ different colors such that each $n$-simplex has a vertex of every color.  In particular, a graph $(V,E)$ is called \textit{bipartite} if $V$ can be partitioned into two disjoint sets $V = S_1 \cup S_2$ called \textit{sides} such that for each $\lbrace u,v \rbrace \in E$, $\vert \lbrace u,v \rbrace \cap S_1 \vert =  \vert \lbrace u,v \rbrace \cap S_2 \vert =1$, i.e., each edge has exactly one vertex in each side.  We note that for a partite $n$-dimensional complex,  each one dimensional link is a bipartite graph.  We also note that the random walk on bipartite graphs always has the eigenvalue $-1$ corresponding to the function $ \mathbbm{1}_{S_1} - \mathbbm{1}_{S_2}$ (where $\mathbbm{1}_{S_i}$ is the indicator function on $S_i$) and thus for every (non zero dimensional) Banach space $\B$ we always have $\lambda_{(V,E)}^\B \geq 1$.  

Consider the classical case of $\B = \mathbb{C}$. For a bipartite graph $(V,E)$,  when $A$ acts on $\ell^2 (V, m)$,  it is standard to think of the the trivial spectrum of $A$ as $\pm 1$ and the space of trivial eigenfunctions is the space spanned by the constant function and $\mathbbm{1}_{S_1} - \mathbbm{1}_{S_2}$ or equivalently the space of trivial eigenfunctions is $\Span \lbrace \mathbbm{1}_{S_1}, \mathbbm{1}_{S_2} \rbrace$.  The projection on the space of trivial eigenfunctions is can be described explicitly as follows: We define the following averaging operators
$M_{1}, M_{2} : \ell^2 (V, m) \rightarrow \mathbb{C}$: 
$$M_{i} \phi = \frac{1}{m (S_{i})} \sum_{u \in S_{i}} \phi (u),$$
where $m (S_i) = \sum_{v \in S_i} m(v)$.  We also define $M_{\sides} : \ell^2 (V, m) \rightarrow \ell^2 (V, m)$ by 
$$M_{\sides} \phi (u) = 
\begin{cases}
M_{1} \phi & u \in S_{1} \\
M_{2} \phi & u \in S_{2} 
\end{cases}.$$ 
With this notation, the non-trivial spectrum of $A$ is bounded in the interval $[- \Vert A(I-M_{\sides}) \Vert, \Vert A(I-M_{\sides}) \Vert]$.  This discussion gives rise to the following definition: Let $\B$ be a Banach space and $(V,E)$ be a bipartite graph. Define 
$$\lambda_{(V,E),\bipartite}^\B = \Vert (A (I-M_\sides)) \otimes \id_{\B} \Vert_{B(\ell^2 (V,m ; \B))}.$$
For a partite $n$-dimensional $X$ and an $(n-2)$-simplex $\tau$, we denote $\lambda_{\tau,   \bipartite}^\B = \lambda_{X_\tau,   \bipartite}^\B$ where $X_\tau$ is the link of $\tau$ (which is a bipartite graph).  We these definitions, we prove the following Theorem: 
\begin{theorem}[\.Zuk type criterion for Banach fixed point property - see Theorem \ref{Zuk type thm}  for a more detailed version]
\label{Zuk type thm intro}
Let $G$ be a locally compact, unimodular group and $X$ be a pure $n$-dimensional, partite simplicial complex such that $X$ is gallery connected and all the $1$-dimensional links of $X$ are connected.  Assume that $G$ is acting on $X$ by simplicial automorphisms and the the action is proper and cocompact.   Let $\mathcal{E}_X$ be the class of Banach spaces such that for every $\B \in \mathcal{E}_X$ it holds that
$$\max_{\tau \in X (n-2)} \lambda_{\tau,   \bipartite}^\B  < \frac{1}{8n-3}.$$
Assume that $\mathbb{C} \in  \mathcal{E}_X$, then $G$ property $(F \mathcal{E}_X)$. 
\end{theorem}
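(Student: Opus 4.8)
The plan is to carry out the Garland--\.Zuk local-to-global scheme in a Banach-linear form, adapted to the fact that the codimension-two links are \emph{bipartite} graphs. Fix $\B \in \mathcal{E}_X$ and a continuous affine isometric action of $G$ on $\B$; it is given by an isometric linear representation $\pi$ of $G$ together with a continuous $1$-cocycle $b \in Z^1(G,\pi)$, and it has a fixed point if and only if $b$ is a coboundary, i.e.\ $[b] = 0$ in $H^1(G,\pi)$. Since $X$ is gallery connected it is in particular connected, and the $G$-equivariant cohomology spectral sequence then yields an injection of $H^1(G,\pi)$ into the first cohomology of the $G$-equivariant cochain complex $(C^\bullet(X;\B)^G, \delta^\bullet)$, built from the weighted $\ell^2$-spaces of $\B$-valued cochains on $X$ (the analogues of the spaces $\ell^2(V,m;\B)$ of the statement). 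Hence it suffices to prove that this first equivariant cohomology vanishes.

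To attack that, I would use a \emph{Banach--Garland inequality} comparing the global coboundary map $\delta^0$ (equivalently a discrete Laplacian built from it) to the local random-walk operators on the links. In the Hilbertian case one has, for each $k$, an exact self-adjoint identity writing the global Laplacian on $k$-cochains as an average over $(k-1)$-simplices $\tau$ of the local Laplacians $I - A_{X_\tau}$ on the links $X_\tau$, plus a positive multiple of the identity; iterating this from $k=n$ down to $k=1$ converts a uniform spectral gap on the $(n-2)$-links into a spectral gap for $\delta^0$. Connectedness of all $1$-dimensional links --- which propagates to gallery-connectedness of all intermediate links --- is exactly what makes this cascade legitimate. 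In a general Banach space there is no inner product, so each such identity must be replaced by two-sided \emph{norm} estimates for the restriction-to-link, localization, and averaging operators between the relevant $\ell^2(\,\cdot\,;\B)$-spaces; each replacement costs a bounded multiplicative constant, and tracking these through the roughly $n$ steps of the cascade is what degrades the classical threshold $\frac{1}{n+1}$ to $\frac{1}{8n-3}$.

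The partite hypothesis enters crucially in this step. Since every $1$-dimensional link is bipartite, $A_{X_\tau}$ always carries the eigenvalue $-1$ with eigenvector $\mathbbm{1}_{S_1} - \mathbbm{1}_{S_2}$, so on $\ell^2(X_\tau, m; \B)$ one must quotient out not merely the constants but the full two-dimensional ``sides'' subspace; this is why the hypothesis is phrased with $\lambda^\B_{\tau,\bipartite}$ and the operator $M_\sides$ rather than $M$. One must then check that these $M_\sides$-decompositions at consecutive dimensions are compatible along the colouring of $X$, so that the localized estimates actually glue into a global one --- the analogue, for partite complexes, of the way the constant functions fit together across dimensions in the classical argument. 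A separate reason that $\mathbb{C} \in \mathcal{E}_X$ must be \emph{assumed} rather than derived is that for a general Banach space one cannot reduce $\B$-valued spectral estimates to scalar ones --- the norm of $T \otimes \id_\B$ can strictly exceed $\Vert T\Vert_{B(\ell^2)}$ --- so the parts of the argument that produce purely combinatorial (untensored) operators, notably the cohomological comparison above and the treatment of harmonic $1$-cochains, must be fed the scalar spectral gap directly.

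Assembling these pieces, the Banach--Garland inequality shows that $\delta^0$ has closed image and that its kernel reduces to the space of $\pi$-invariant vectors; consequently the first equivariant cohomology of $X$ vanishes, hence so does $H^1(G,\pi)$, and the affine action has a fixed point. I expect the genuinely hard part to be the Banach--Garland comparison itself: choosing, on each $C^k(X;\B)^G$, the right substitute for the Laplacian whose norm is actually controllable by the link operators --- in a Banach space one cannot pass freely between $\delta\delta^*$ and $\delta^*\delta$ and must argue directly with the combinatorial up/down structure --- keeping the accumulated multiplicative loss linear in $n$, and reconciling the bipartite ``sides'' decompositions across dimensions so that the local estimates glue.
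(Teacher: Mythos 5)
Your proposal follows the classical Garland route: pass to the equivariant cochain complex, prove a Banach analogue of the Garland spectral-cascade inequality, and deduce that $H^1(G,\pi)=0$ for every isometric $\pi$ on $\B$. The paper takes a genuinely different route. It never touches the cochain complex below the top dimension and never attempts a descending cascade of Laplacian identities. Instead it works exclusively on the space $C(X(n),\pi)$ of equivariant $\B$-valued functions on $n$-simplices. For each $(n-1)$-element type $\nu\subseteq\{0,\dots,n\}$ it defines a projection $P^\pi_\nu$ (averaging over $\sim_\nu$-equivalence classes of $n$-simplices), shows via the localization Lemma~\ref{angle bound by bipartite link lemma} that the cosine of the angle between any pair $P^\pi_\nu,P^\pi_{\nu'}$ in the sense of Definition~\ref{Angle between proj def} is controlled by $\lambda^\B_{\tau,\bipartite}$ over $(n-2)$-links of type $\nu\cap\nu'$, and then invokes the angle criterion Theorem~\ref{angle criterion thm} to obtain exponential norm-convergence of the iterates of $T_\pi=\frac{1}{n+1}\sum_\nu P^\pi_\nu$ to a projection onto the space of constant equivariant maps. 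From this it constructs, via the maps $R_1,R_2$ of Lemma~\ref{f_k lemma}, a sequence $f_k\in C_c(G)$ with $\pi(f_k)=R_2T_\pi^kR_1$, so that the group has robust Banach property (T) with respect to $\mathcal{E}_{X,\varepsilon}$, and finally passes to the fixed point property through Proposition~\ref{robust implies fp prop}. In particular the constant $\tfrac{1}{8n-3}$ is not produced by ``tracking multiplicative loss through $\approx n$ cascade steps''; it is the hypothesis of the angle criterion Theorem~\ref{angle criterion thm} and enters in a single application.

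Several concrete gaps in your plan. First, the Banach--Garland cascade is not a known statement that merely needs ``two-sided norm estimates'' inserted: in the Hilbertian argument the self-adjoint identity relating the global Laplacian to link Laplacians rests on the Hodge decomposition and on interchanging $\delta^*\delta$ with $\delta\delta^*$, neither of which is available in a Banach space. You flag this, but you do not supply a replacement, and filling it is exactly the open problem that motivated the paper's switch to the angle-criterion machinery; the author explicitly notes in the introduction that the lack of a partite Banach \.Zuk criterion was the obstruction. Second, your account of why $\mathbb{C}\in\mathcal{E}_X$ must be assumed is incorrect relative to the paper's mechanism: the hypothesis is used only so that, by Lemma~\ref{L2 norm stability} (closure of $\mathcal{E}_X$ under $\ell^2$-sums), one has $\B\oplus_{\ell^2}\mathbb{C}\in\mathcal{E}_X$ whenever $\B\in\mathcal{E}_X$, which is precisely the hypothesis of Proposition~\ref{robust implies fp prop} needed to pass from robust Banach property (T) to the fixed-point property; it has nothing to do with ``feeding a scalar spectral gap to untensored operators''. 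Third, the compatibility problem you raise for the $M_{\sides}$-decompositions across dimensions is indeed the point where a cascade would break; the paper dissolves it entirely by never descending in dimension, working instead with the single operator $A(I-M_{\sides})$ on each $(n-2)$-link and identifying its $\B$-valued operator norm with a cosine of an angle between projections (Proposition~\ref{spec gap equal cos prop}), which are then fed into the one-shot Theorem~\ref{angle criterion thm}. Finally, even granting a hypothetical Banach--Garland cascade, your route would only give the fixed-point property, whereas the paper's argument establishes the strictly stronger robust Banach property (T), which is needed downstream for the applications.
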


The idea behind the proof of this Theorem is the following: In \cite{OppRobust}, the author proved a version of this Theorem under the assumption that the fundamental domain of the action is a single simplex. The main argument there was bounding the angle between the projections defined by the subgroups stabilizing the $(n-1)$-faces of the fundamental domain. In this paper, the idea is to use this angle criterion, but apply it on the projections defined by the coloring of the simplicial complex. As in \cite{OppRobust}, this leads to a proof of a strengthened version of Banach property (T) (i.e., robust property (T)) and this property in turn leads to Banach fixed point properties.    

\subsection{Fixed point properties of a random group in the Gromov model}

A random group is a group chosen randomly according to some model and one is interested in the asymptotic properties of such randomly chosen group.  The most famous model is the Gromov density model: 
\begin{definition}[Gromov density model]
Let $k \in \mathbb{N}, k \geq 2$ and $0 \leq d \leq 1$ be constants and $l \in \mathbb{N}$ be a parameter.  A random group is the Gromov density model $\mathcal{D} (k,l,d)$ is a group $\Gamma = \langle \mathcal{A}  \vert \mathcal{R} \rangle$ where $\vert \mathcal{A}  \vert =k$ and $\mathcal{R}$ is a set of relators of length $l$ (in $\mathcal{A} \cup \mathcal{A} ^{-1}$) randomly chosen from the set 
$$\lbrace \mathcal{R} \text{ is a set of cyclically reduced relators of length } l :  \vert \mathcal{R} \vert = \lfloor (2k-1)^{dl} \rfloor \rbrace$$ with uniform probability. We denote a random group in this model by $\Gamma \in \mathcal{D} (k,l,d)$.

For a group property $P$, we say that $P$ holds asymptotically almost surely (a.a.s.) in $\mathcal{D} (k,l,d)$ if 
$$\lim_{l \rightarrow \infty} \mathbb{P} (\Gamma \in \mathcal{D} (k,l,d) \text{ has property } P) = 1.$$
\end{definition}

It was proven \cite{Zuk2, KK, DrutuM, Ash2}, that for every $d > \frac{1}{3}$ property (T) (and equivalently, property (FH)) holds  a.a.s.  in $\mathcal{D} (k,l,d)$  (in most cases, it is assumed that $l$ is divisible by $3$, but recently Ashcroft \cite{Ash2} showed how to remove this assumption).  Prior to our work, generalizing this result to $L^p$ spaces was met with some difficulty: Indeed,  prior to this work, the state-of-the-art result was by Dru\c{t}u and Mackay \cite{DrutuM} who proved the following: Given $2 \leq p < \infty$, if $k \geq 10 \cdot 2^p$, then for every $d > \frac{1}{3}$ it holds a.a.s. that $\mathcal{D} (k,l,d)$ has property $(FL^{p'})$ for any $2 \leq p' \leq p$. As noted by Dru\c{t}u and Mackay \cite{DrutuM} it is natural to expect that for any $k \geq 2$ and any $d > \frac{1}{3}$ it holds a.a.s. that $\mathcal{D} (k,l,d)$ has property $(FL^{p})$ for any $2 \leq p < \infty$, but they could not achieve this result. 

Below, we use our Theorem \ref{Zuk type thm intro} to vastly improve on the known results for Banach fixed point properties for random group in the Gromov density model.  Explicitly, we consider the class of uniformly curved Banach spaces (see Definition \ref{uc def} below) and prove that for every $k \geq 2$ and every $d > \frac{1}{3}$ it holds a.a.s. that $\mathcal{D} (k,l,d)$ has property $(F\B)$ for every uniformly curved Banach space. The class of uniformly curved Banach spaces contains all $L^p$ spaces and thus we prove the fixed point $L^p$ property that was conjectured by Dru\c{t}u and Mackay. More explicitly, we prove the following:

\begin{theorem}[Banach fixed point properties for the density model - see Theorem \ref{f.p. density model}  for a more detailed version]
\label{f.p. density model - intro}
Let $\frac{1}{3} <d < \frac{1}{2}$, $k \geq 2$ be constants.  For $l$ divisible by $3$,  it holds a.a.s. that  $\mathcal{D} (k,l,d)$ has property $(F\B)$ for any uniformly curved space $\B$.  In particular, for every $2 \leq p < \infty$  it holds a.a.s. that $\mathcal{D} (k,l,d)$ has property $(F L^p)$.  Explicitly,  there are universal constants $C' , C''$ independent of $k,l,d$ such that for any
$$2 \leq p \leq  C ' (d-\frac{1}{3}) (\log (2k-1)) l - C'',$$
$\mathcal{D} (k,l,d)$ has property $(F L^p)$ a.a.s.  (given that $l$ is divisible by $3$).
\end{theorem}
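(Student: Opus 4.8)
The plan is to combine the Banach \.Zuk criterion of Theorem~\ref{Zuk type thm intro} with the standard geometric model for random groups in the density regime. The first step is to recall from the work of \.Zuk, Kotowski--Kotowski, and Dru\c{t}u--Mackay that for $l$ divisible by $3$ one replaces $\mathcal{D}(k,l,d)$ by the closely related triangular-type presentation associated to a random set of relators of length $3$ over the alphabet $\mathcal{A}_{l/3}$ of all reduced words of length $l/3$; a.a.s.\ the resulting group is isomorphic to the original random group, and it acts properly and cocompactly on a pure $2$-dimensional simplicial complex $X$ (the Cayley complex of the triangular presentation) which is \emph{partite} — the vertices are colored by $\mathbb{Z}/3$ according to which cyclic "slot" of the length-$3$ relators they occupy. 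Thus I am exactly in the situation of Theorem~\ref{Zuk type thm intro} with $n=2$: $X$ is gallery connected, simply connected, and its $1$-dimensional links are bipartite graphs. The point of working with the partite complex (rather than the usual non-partite links) is precisely that it is what makes the bipartite spectral quantity $\lambda_{\tau,\bipartite}^{\B}$ the relevant one.

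The second and main step is the spectral estimate: I must show that a.a.s.\ every link $X_v$ of $X$ is a bipartite graph whose bipartite random-walk norm satisfies $\lambda_{X_v,\bipartite}^{\B} < \tfrac{1}{8n-3} = \tfrac{1}{13}$ for every uniformly curved $\B$ in the relevant range of $p$. The link of a vertex of $X$ is (a.a.s.) a random bipartite graph: on the Hilbert side, the Kotowski--Kotowski / Dru\c{t}u--Mackay analysis shows that for $d>\tfrac13$ the nontrivial spectral gap of the link tends to be as large as one wants — the nontrivial spectrum is $o(1)$ as $l\to\infty$ — because the expected degree grows like $(2k-1)^{(d-1/3)l}$ and the link behaves like a sparse-but-expanding random bipartite graph (essentially a random bipartite biregular graph, to which the Friedman/Ramanujan-type bounds, or the second-moment arguments used in the cited references, apply). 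Concretely: a.a.s.\ $\lambda_{X_v,\bipartite}^{\mathbb{C}} \le C (2k-1)^{-\frac{1}{2}(d-1/3)l}$ for a universal constant $C$. The third step is to upgrade this Hilbert-space bound to a Banach-space bound for uniformly curved $\B$. This is where the quantitative definition of uniform curvature does the work: by Definition~\ref{uc def}, a uniformly curved space $\B$ has a modulus $\alpha_{\B}(t)\to 0$ as $t\to 0$ controlling how much the norm of $T\otimes\id_{\B}$ can exceed $\|T\|$ for a contraction $T$ on $\ell^2$; applying this with $T = A(I-M_{\sides})$ and using the Hilbert bound $\|T\|\le C(2k-1)^{-\frac12(d-1/3)l}$ gives
$$
\lambda_{X_v,\bipartite}^{\B} \;\le\; \alpha_{\B}\!\left(C(2k-1)^{-\frac12(d-1/3)l}\right),
$$
and since $L^p$ has $\alpha_{L^p}(t)\lesssim t^{c/p}$ for a universal $c$ (the uniform-curvature modulus of $L^p$ is polynomial with exponent $\asymp 1/p$), this is $< \tfrac{1}{13}$ as soon as $C(2k-1)^{-\frac12(d-1/3)l}$ is smaller than a fixed power (depending on $p$) of a constant — equivalently, as soon as $p \le C'(d-\tfrac13)(\log(2k-1))\,l - C''$ for universal $C',C''$. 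Taking a union bound over the finitely many $G$-orbits of vertices (there are $O(1)$ of them, independent of $l$) preserves the "a.a.s." conclusion.

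The final step is bookkeeping: feed the above into Theorem~\ref{Zuk type thm intro}. We have verified, for $l$ divisible by $3$ and $d\in(\tfrac13,\tfrac12)$, that a.a.s.\ $X$ satisfies all the hypotheses and that $\max_{\tau\in X(0)}\lambda_{\tau,\bipartite}^{\B} < \tfrac{1}{13}$ simultaneously for $\mathbb{C}$ and for every uniformly curved $\B$ in the stated $p$-range; hence $\mathbb{C}\in\mathcal{E}_X$ and all such $\B$ lie in $\mathcal{E}_X$, so $G = \mathcal{D}(k,l,d)$ has property $(F\mathcal{E}_X)$, which in particular gives $(F\B)$ for every such $\B$ and, specializing, $(FL^p)$ for $2\le p\le C'(d-\tfrac13)(\log(2k-1))l - C''$. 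Since for \emph{fixed} $p$ this range eventually contains $p$ once $l$ is large, we also get $(FL^p)$ a.a.s.\ for every fixed $2\le p<\infty$, and likewise $(F\B)$ for every fixed uniformly curved $\B$ (whose modulus $\alpha_{\B}$ is a fixed function tending to $0$).

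I expect the main obstacle to be the second step: establishing, with explicit constants, the Hilbert-space spectral bound $\lambda_{X_v,\bipartite}^{\mathbb{C}} \le C(2k-1)^{-\frac12(d-1/3)l}$ for the random bipartite link — that is, importing or re-deriving the probabilistic spectral-gap estimate in the \emph{bipartite} normalization and with a clean enough dependence on $l$ to survive the union bound and the subsequent composition with $\alpha_{L^p}$. Verifying that the triangular-presentation reduction (Step~1) genuinely produces a partite complex with the coloring compatible with the $G$-action, and tracking the $n=2$ specialization of the constant $\tfrac{1}{8n-3}$, are routine but must be done carefully.
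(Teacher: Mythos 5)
Your high-level scheme matches the paper's: reduce $\mathcal{D}(k,l,d)$ to a positive-triangular model whose Cayley complex is a partite $2$-complex, bound the bipartite random-walk norm of the links on the Hilbert side, upgrade to uniformly curved $\B$ via the modulus $\omega$, and feed into Theorem~\ref{Zuk type thm intro} with $n=2$, $\tfrac{1}{8n-3}=\tfrac{1}{13}$. The exponent you predict, $\lambda_{X_v,\bipartite}^{\mathbb{C}}\lesssim (2k-1)^{-\frac12(d-\frac13)l}$, is indeed what the paper obtains after substituting $m=\tfrac12(2k-1)^{l/3}$ and $\alpha=3(1-d)$ into Theorem~\ref{link of Cayley complex thm}. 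However, Step~1 contains a genuine error: the group presented over the alphabet of length-$l/3$ words is \emph{not} isomorphic to the random group in $\mathcal{D}(k,l,d)$. The correct relationship, which the paper assembles in Theorem~\ref{reduction thm} out of Corollary~\ref{from density to binomial coro}, Proposition~\ref{B' prop}, and Lemma~\ref{KK f.i. lemma} (Kotowski--Kotowski), is a chain: the $\mathcal{M}^+$-group maps under $\varphi$ onto a \emph{finite-index subgroup} of the $\mathcal{B}'$-group, the $\mathcal{B}$-group is a \emph{quotient} of the $\mathcal{B}'$-group, and the $\mathcal{D}$-probability is controlled by the $\mathcal{B}$-probability for monotone increasing properties. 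Transferring $(F\mathcal{E})$ across this chain requires Proposition~\ref{f.p preserve under quotient prop} (quotient stability) and Proposition~\ref{f.p. is equivalent to f.i. prop} (finite-index heredity for super-reflexive spaces), which your ``isomorphism'' shortcut skips at precisely the place they are needed.

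The second substantive gap is Step~2, which you already flag as the main obstacle: the quantitative bipartite spectral bound is not a black-box citation but real work in the paper. The links of the $\mathcal{M}^+$-Cayley complex decompose into three bipartite random graphs $L_1,L_2,L_3$ with multi-edges, and the paper's Theorem~\ref{link of Cayley complex thm} assembles the bound from Ash's spectral estimate for bipartite Erd\H{o}s--R\'enyi graphs (Theorem~\ref{second ev of bipart ER thm}), a degree-concentration estimate (Lemma~\ref{degree dist lemma}), and two purely combinatorial lemmas (Lemma~\ref{several graphs lemma} for unioning spectrally-gapped graphs with comparable degrees, Lemma~\ref{graph deleting lemma} for stripping multi-edges). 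Your sketch anticipates the \emph{shape} of the final bound but not the decomposition that yields it, so it would not stand on its own without this analysis.
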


As a corollary, we give a lower bound on the conformal dimension of the boundary of a random groups in the density model.  Namely, by a Theorem of Bourdon  \cite{Bourd2}, if for a given $2 \leq p$, a hyperbolic group $\Gamma$ has property $(F L^p)$, then the conformal dimension of $\partial_\infty \Gamma$ is $\geq p$. This readily gives the following:
\begin{corollary}
\label{conformal dim growth corollary}
Let $\frac{1}{3} <d < \frac{1}{2}$, $k \geq 2$ be constants.  For $l$ divisible by $3$ let $\Gamma$ be a random group in the model $\mathcal{D} (k,l,d)$.  Then there are universal constants $C', C''$ independent of $k,l,d$ such that it holds a.a.s. that 
$$C ' (d-\frac{1}{3}) (\log (2k-1)) l - C'' \leq {\rm Confdim} (\partial_\infty \Gamma) .$$
\end{corollary}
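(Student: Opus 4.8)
The plan is to obtain the statement as an essentially immediate consequence of Theorem~\ref{f.p. density model - intro}, combined with two standard external inputs: Gromov's theorem that random groups below density $\tfrac12$ are hyperbolic, and Bourdon's lower bound on conformal dimension quoted just above the corollary.

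\emph{Step 1 (Hyperbolicity).} First I would recall the classical fact (Gromov; see also Ollivier's survey on random groups) that for $0 < d < \tfrac12$ a random group $\Gamma \in \mathcal{D}(k,l,d)$ is a.a.s.\ infinite and nonelementary word-hyperbolic. In particular $\partial_\infty \Gamma$ is then a nonempty perfect compact metric space and ${\rm Confdim}(\partial_\infty \Gamma)$ is well defined.

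\emph{Step 2 (Fixed point range).} Let $C', C''$ be the universal constants furnished by Theorem~\ref{f.p. density model - intro} and set $p_l = C'(d-\tfrac13)(\log(2k-1))\, l - C''$. Since the conclusion is asymptotic in $l$, we may restrict attention to $l$ large enough that $p_l \geq 2$. By Theorem~\ref{f.p. density model - intro}, for such $l$ it holds a.a.s.\ that $\Gamma$ has property $(FL^p)$ for every real $p$ with $2 \leq p \leq p_l$. Intersecting this a.a.s.\ event with the a.a.s.\ event of Step~1 (a finite intersection of a.a.s.\ events is a.a.s.), we get that a.a.s.\ $\Gamma$ is a nonelementary hyperbolic group with property $(FL^p)$ for all $2 \leq p \leq p_l$.

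\emph{Step 3 (Bourdon's bound).} Finally I would apply the Theorem of Bourdon~\cite{Bourd2} stated above: a nonelementary hyperbolic group with property $(FL^p)$ has ${\rm Confdim}(\partial_\infty \Gamma) \geq p$. Applying this with $p = p_l$ (equivalently, taking the supremum over the admissible values of $p$) yields, a.a.s.,
\[
{\rm Confdim}(\partial_\infty \Gamma) \;\geq\; p_l \;=\; C'(d-\tfrac13)(\log(2k-1))\, l - C'',
\]
which is the claim. There is essentially no obstacle here, since all the substance is contained in Theorem~\ref{f.p. density model - intro}; the only points needing a word of care are the bookkeeping ones: that two a.a.s.\ statements may be intersected, that the small-$l$ regime $p_l < 2$ is irrelevant because a.a.s.\ is a statement about $l \to \infty$, and that Bourdon's inequality is legitimately invoked for the real parameter $p = p_l$ rather than only for integer $p$, which is fine as $L^p$ spaces exist for all $p \geq 1$.
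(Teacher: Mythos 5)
Your proof is correct and follows exactly the route the paper intends: the corollary is stated immediately after Bourdon's theorem and Theorem~\ref{f.p. density model - intro}, and the paper's ``this readily gives'' is precisely your Steps 2--3 (with the a.a.s.\ hyperbolicity for $d<\tfrac12$ used implicitly). Your Step 1 and the remarks about intersecting a.a.s.\ events and the irrelevance of small $l$ are the right bookkeeping and match the paper's intent.
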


An upper bound for the conformal dimension was given by Mackay:
\begin{proposition} \cite[Proposition 1.7]{Mackay}
Let $\frac{1}{3} <d < \frac{1}{2}$, $k \geq 2$ be constants and let $\Gamma$ be a random group in the model $\mathcal{D} (k,l,d)$.  Then it holds a.a.s. that 
$${\rm Confdim} (\partial_\infty \Gamma) \leq \frac{16}{\log (2)} \frac{1}{1-2d} (\log (2k-1)) l..$$
\end{proposition}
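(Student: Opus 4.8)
The plan is to bound the conformal dimension from above by the Hausdorff dimension of a carefully chosen metric in the conformal gauge of $\partial_\infty \Gamma$, namely a visual metric with the largest admissible visual parameter. Recall that for any metric $\rho$ quasisymmetrically equivalent to a visual metric on $\partial_\infty \Gamma$ one has ${\rm Confdim}(\partial_\infty \Gamma) \le \dim_H (\partial_\infty \Gamma, \rho)$, and that by Coornaert's theorem a visual metric $d_\epsilon$ with parameter $\epsilon$ on the boundary of a non-elementary hyperbolic group makes $(\partial_\infty \Gamma, d_\epsilon)$ Ahlfors $Q$-regular with $Q = h(\Gamma)/\epsilon$, where $h(\Gamma)$ is the exponential growth rate (volume entropy) of $\Gamma$ with respect to its generating set. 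Since the general theory of hyperbolic groups guarantees that visual metrics $d_\epsilon$ exist and lie in the conformal gauge for every $\epsilon \le \epsilon_0$, where $\epsilon_0$ is comparable to $1/\delta(\Gamma)$ (the hyperbolicity constant), the proof reduces to two effective estimates --- an upper bound on $h(\Gamma)$ and an upper bound on $\delta(\Gamma)$ --- after which $ {\rm Confdim}(\partial_\infty \Gamma) \le h(\Gamma)/\epsilon_0 \le C\, h(\Gamma)\, \delta(\Gamma)$ for a universal $C$.

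The entropy bound is immediate: $\Gamma$ is a quotient of the free group $F_k$, so with respect to the images of a free basis, balls in $\Gamma$ are no larger than balls in $F_k$; hence $h(\Gamma) \le \log(2k-1)$.

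The bound on $\delta(\Gamma)$ is where the density model enters. By Ollivier's sharp isoperimetric inequality for random groups at density $d < \frac{1}{2}$, a.a.s. every reduced van Kampen diagram $D$ over the random presentation satisfies $|\partial D| \ge (1 - 2d - o(1))\, l\, |D|$. Thus $\Gamma$ a.a.s. satisfies a linear isoperimetric inequality whose Dehn function has constant comparable to $\frac{1}{(1-2d)\, l}$, and the standard effective implication ``linear Dehn function $\Rightarrow$ hyperbolic'', applied to a presentation with relators of length $l$, yields $\delta(\Gamma) = O\!\left( \frac{l}{1-2d} \right)$, hence $\epsilon_0 \gtrsim \frac{1-2d}{l}$. (This is where the hypothesis $d < \frac{1}{2}$ is used; the divisibility of $l$ by $3$ is needed only to invoke the genericity results --- hyperbolicity, non-elementarity, Ollivier's inequality --- in their classical form, and the lower bound $d > \frac{1}{3}$ plays no role here, being inherited from the context of Mackay's theorem.)

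Combining the two estimates gives ${\rm Confdim}(\partial_\infty \Gamma) \le h(\Gamma)/\epsilon_0 \le C' \, \frac{\log(2k-1)\, l}{1-2d}$ a.a.s. for a universal $C'$. The only remaining task is to pin down the precise numerical constant $\frac{16}{\log 2}$: this requires tracking the normalisation of the visual metric (its base, which is responsible for the factor $\frac{1}{\log 2}$), the explicit constant in the passage from $\delta$ to an admissible visual parameter $\epsilon_0$, the explicit constant relating $\delta$ to the isoperimetric constant for a presentation with relators of length $l$, and the absorption of the $o(1)$ terms in Ollivier's inequality into the slack in $d < \frac{1}{2}$. I expect this constant-chasing to be the main obstacle, though it is entirely routine; the conceptual content is contained in the chain ${\rm Confdim} \le \dim_H(\,\cdot\,, d_{\epsilon_0}) = h(\Gamma)/\epsilon_0 \le C\, h(\Gamma)\, \delta(\Gamma)$ together with the two effective inputs above.
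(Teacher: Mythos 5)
The statement you are asked to prove is not proved in the paper at all: it is imported verbatim as \cite[Proposition 1.7]{Mackay}, so there is no internal proof to compare your attempt against. What can be compared is your sketch versus the strategy in Mackay's cited work, and there your outline is the right one. The chain ${\rm Confdim}(\partial_\infty\Gamma)\le \dim_H(\partial_\infty\Gamma,d_{\epsilon_0}) = h(\Gamma)/\epsilon_0$, using Coornaert's Ahlfors-regularity theorem for the Patterson--Sullivan measure in a visual metric, followed by the elementary entropy bound $h(\Gamma)\le\log(2k-1)$ for a $k$-generator group and the effective hyperbolicity bound $\delta(\Gamma)=O\bigl(l/(1-2d)\bigr)$ coming from Ollivier's sharp isoperimetric inequality at density $d<\tfrac12$, is precisely the mechanism behind Mackay's upper bound, and the $\tfrac{1}{\log 2}$ does indeed arise from the largest admissible visual parameter $\epsilon_0\sim\log(\cdot)/\delta$. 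Your observation that $d>\tfrac13$ plays no role in the upper bound and that the divisibility of $l$ by $3$ is only used to inherit genericity (hyperbolicity, non-elementarity) is also correct.

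Two minor caveats. First, the step ``linear Dehn function $\Rightarrow$ explicit $\delta$'' is genuinely where the hard work lies: Ollivier's inequality gives $|\partial D|\ge(1-2d-o(1))\,l\,|D|$ for reduced diagrams, and converting this into a bound on the thinness of geodesic triangles requires the quantitative form of the linear-isoperimetry-implies-hyperbolicity theorem keeping track of the relator length $l$, which is what produces the numerical constant $16$; your phrasing ``entirely routine'' understates how much of Mackay's argument lives in that conversion. Second, to apply Coornaert's theorem you should record that $\Gamma$ is a.a.s.\ non-elementary hyperbolic at density $d<\tfrac12$ (and infinite), which you implicitly use but do not state; this is part of the same Ollivier genericity package and is unproblematic, but it belongs in the hypotheses of the argument.
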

Combining Corollary \ref{conformal dim growth corollary} with Mackay's upper bound,  one can see that our lower bound for the conformal dimension is in fact sharp:

\begin{theorem}
\label{conformal dim growth thm}
Let $\frac{1}{3} <d < \frac{1}{2}$, $k \geq 2$ be constants.  For $l$ divisible by $3$, let $\Gamma$ be a random group in the model $\mathcal{D} (k,l,d)$.  Then there are universal constants $C', C''$ independent of $k,l,d$ such that it holds a.a.s. that 
$$C ' (d-\frac{1}{3})  - \frac{C''}{(\log (2k-1)) l} \leq \frac{{\rm Confdim} (\partial_\infty \Gamma)}{(\log (2k-1)) l}  \leq \frac{16}{\log (2)} \frac{1}{1-2d} .$$
\end{theorem}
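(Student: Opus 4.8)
The plan is to deduce the statement directly by combining two results already in hand: the lower bound of Corollary~\ref{conformal dim growth corollary} and Mackay's upper bound recorded in the Proposition immediately above. Both hold asymptotically almost surely in $\mathcal{D}(k,l,d)$ under the present hypotheses ($\frac13<d<\frac12$, $k\ge 2$, $l$ divisible by $3$), so no further model-theoretic input is needed; the role of this theorem is merely to package the two one-sided estimates into a single two-sided one after normalization.

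First I would invoke Corollary~\ref{conformal dim growth corollary} to obtain universal constants $C',C''$ --- independent of $k,l,d$, a property inherited from Theorem~\ref{f.p. density model - intro} --- for which, a.a.s.,
$$C'\Bigl(d-\tfrac13\Bigr)(\log(2k-1))\,l - C'' \;\le\; {\rm Confdim}(\partial_\infty\Gamma),$$
and Mackay's Proposition to obtain, a.a.s.,
$${\rm Confdim}(\partial_\infty\Gamma) \;\le\; \frac{16}{\log 2}\cdot\frac{1}{1-2d}\,(\log(2k-1))\,l.$$
Since a finite intersection of events each holding a.a.s.\ again holds a.a.s., these two inequalities are valid simultaneously a.a.s.

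Next I would divide the resulting chain of inequalities by $(\log(2k-1))\,l$, which is strictly positive because $k\ge 2$ forces $\log(2k-1)\ge\log 3>0$ and $l$ is a positive integer (and $1-2d>0$ since $d<\tfrac12$, so the right-hand side is finite). This immediately yields
$$C'\Bigl(d-\tfrac13\Bigr) - \frac{C''}{(\log(2k-1))\,l} \;\le\; \frac{{\rm Confdim}(\partial_\infty\Gamma)}{(\log(2k-1))\,l} \;\le\; \frac{16}{\log 2}\cdot\frac{1}{1-2d},$$
which is exactly the assertion of the theorem.

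The only points needing a word of care are bookkeeping ones: that the constants $C',C''$ appearing here are literally those furnished by Corollary~\ref{conformal dim growth corollary} (so that ``universal'' and ``independent of $k,l,d$'' are justified), and that Mackay's Proposition applies to the same random group $\Gamma$ --- which it does, since its hypotheses coincide with ours apart from not even requiring $3\mid l$. I do not expect a genuine obstacle in carrying this out: the entire substance of the theorem sits upstream, namely in the fixed point Theorem~\ref{f.p. density model - intro} (established via the Banach \.Zuk criterion, Theorem~\ref{Zuk type thm intro}), in Bourdon's theorem converting property $(FL^p)$ into a lower bound on the conformal dimension of $\partial_\infty\Gamma$, and in Mackay's independent upper bound.
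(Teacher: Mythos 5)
Your proposal is correct and matches the paper's own (implicit) argument: the theorem is stated right after the sentence ``Combining Corollary \ref{conformal dim growth corollary} with Mackay's upper bound, one can see that our lower bound for the conformal dimension is in fact sharp,'' and that is precisely the combination you carry out, followed by dividing through by the positive quantity $(\log(2k-1))\,l$. No further commentary is needed.
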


\begin{remark}
After the completion of this manuscript, we were informed about a forthcoming work by Jordan Frost attaining a similar lower bound on the growth of the conformal dimension for all $d < \frac{1}{2}$ (and not just $\frac{1}{3} <d < \frac{1}{2}$ as in our work). We note that Frost methods are completely different from ours and he does not attain results regarding Banach fixed point properties. 
\end{remark}

Our proof of Theorem \ref{f.p. density model - intro} follows the scheme of proof \cite[Theorem B]{KK}: first, we reduce the problem to another model of random groups in which the Cayley complex of a random group is a partite simplicial complex and then we apply our version of \.Zuk's criterion (this is not completely straightforward and we heavily use the ideas of \cite{ALS} in the analysis). 

\subsection{Organization} 
The paper is organized as follows. In Section \ref{Prelim. sec}, we cover preliminary material. In Section \ref{Zuk criterion sec}, we prove  Banach \.Zuk's criterion for groups acting on partite complexes,  i.e., we prove an extended version of Theorem \ref{Zuk type thm intro}. In Section \ref{random groups sec}, we use our criterion to prove Banach fixed point properties for random groups in the density model, i.e., we prove an extended version of Theorem \ref{f.p. density model - intro}.

\section{Preliminaries}
\label{Prelim. sec}

\subsection{Robust Banach property (T) and the fixed point property}

In \cite{OppRobust}, the author introduced the notion of robust Banach property (T) as a strengthened version of Banach property (T) (as defined is \cite{BFGM}) that is weaker than the notion of strong Banach property (T) defined by V. Lafforgue \cite{Laff}  

Let $G$ be a locally compact group. Let $\mathcal{F}$ be a family of linear representations on Banach spaces, $\pi : G \rightarrow B (\B)$, that are continuous with respect to the strong operator topology.   Denote $C_c (G)$ to be the compactly supported continuous functions $f: G \rightarrow \mathbb{C}$.  For every $\pi \in \mathcal{F}$, define $\pi (f) \in B(\B)$ via the Bochner integral 
$$\pi (f).x = \int_{G} f(g) \pi (g).x dg$$ 
 (recall that $f$ is compactly supported and thus this integral converges).  Define the norm $\Vert . \Vert_{\mathcal{F}}$ on $C_c (G)$ as $\Vert f \Vert_{\mathcal{F}} =  \sup_{\pi \in \mathcal{F}} \Vert \pi (f) \Vert$.

If $\mathcal{F}$ is closed under complex conjugation (i.e., $\pi \in \mathcal{F} \Rightarrow \overline{\pi} \in \mathcal{F}$) and under duality (i.e., $\pi \in \mathcal{F} \Rightarrow \pi^* \in \mathcal{F}$), then $C_{\mathcal{F}} (G)$ is a Banach algebra with an involution 
$$f^* (g) = \overline{f(g^{-1})}, \forall g \in G.$$

\begin{definition}[Robust Banach property (T)]
\label{robust property T definition - compact generation}
Let $G$ be a compactly generated group and let $K$ be some symmetric compact set that generates $G$. For a class of Banach spaces $\mathcal{E}$ and a constant $\beta \geq 1$, denote $\mathcal{F} (\mathcal{E}, K,\beta)$ to be the class of all the continuous representations $\pi$ of $G$ on some $\B \in \mathcal{E}$ such that $\sup_{g \in K} \Vert \pi (g) \Vert \leq \beta$. 

We say that $G$ has robust Banach property (T) with respect to a class of Banach spaces $\mathcal{E}$, if there exists $\beta >1$ and a sequence of real functions $f_k \in C_c (G)$ such that for every $k$, $\int f_k=1$ and such that the sequence $(f_k)$ converges in $C_{\mathcal{F} (\mathcal{E},K,\beta)}$ to $p$ and $\forall (\pi, \B) \in \mathcal{F} (\mathcal{E},K,\beta)$, $\pi (p)$ is a projection on $\B^{\pi (G)}$.

We will call the sequence $f_k$ above a Kazhdan projection with respect to $\mathcal{F} (\mathcal{E},K,\beta)$.
\end{definition}

\begin{remark}
The above Definition assumes compact generation. A more general definition can be found in \cite{OppRobust}. Also, the definition in \cite{OppRobust} assumes that the functions $f_k$ are symmetric and thus (as noted in \cite{SalleLocal}) $p$ is central. Since the main focus of this paper is fixed point properties (see below), centrality of $p$ is not needed and thus omitted from the definition.
\end{remark}

Robust Banach property (T) is connected to the fixed point property defined as follows:
\begin{definition}
For a Banach space $\B$,  we say that $G$ has property $(F \B)$ if every affine isometric action of $G$ on $\B$ has a fixed point.  For a class of Banach spaces $\mathcal{E}$, we say that $G$ has property $(F \mathcal{E})$ if for every $\B \in \mathcal{E}$,  $G$ has property $(F \B)$. In particular for $1 \leq p \leq \infty$, a group $G$ is said property $(F L^p)$ if it has property $(F \B)$ for every $\B$ that is an $L^p$-space.
\end{definition}

\begin{proposition}\cite[Proposition 5.9]{OppRobust}
\label{robust implies fp prop}
Let $\mathcal{E}$ be a class of Banach spaces such that for every $\B \in \mathcal{E}$ it holds that $\B \oplus_{\ell^2} \mathbb{C} \in \mathcal{E}$. If $G$ has robust Banach property (T) with respect $\mathcal{E}$, then $G$ has property $(F \mathcal{E})$. 
\end{proposition}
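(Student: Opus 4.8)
The plan is to derive the fixed point property directly from a Kazhdan projection, in such a way that a genuine fixed point is produced (so that no reflexivity or barycenter argument is needed). Let $\alpha$ be an affine isometric action of $G$ on some $\B \in \mathcal{E}$, and write $\alpha(g)x = \pi(g)x + b(g)$, where $\pi$ is the linear part --- a strongly continuous isometric representation, so that $\sup_{g \in G}\Vert \pi(g) \Vert = 1$ --- and $b \colon G \to \B$ is the associated continuous $1$-cocycle. Let $K$ be the symmetric compact generating set and let $\beta > 1$ and $(f_k) \subset C_c(G)$ (real, with $\int f_k = 1$, converging to $p$ in $C_{\mathcal{F}(\mathcal{E},K,\beta)}$ and with $\rho(p)$ a projection onto $V^{\rho(G)}$ for every $(\rho,V) \in \mathcal{F}(\mathcal{E},K,\beta)$) be the data furnished by robust Banach property (T). The obstacle is that $\pi(p)$ ``does not see'' the cocycle $b$, so the affine part must first be promoted to a linear representation on a larger space.

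First I would record that $C := \sup_{g \in K}\Vert b(g) \Vert_\B < \infty$, since $b$ is continuous and $K$ is compact. Then, for a scaling parameter $s > 0$ to be chosen, I define a representation $\tilde\pi_s$ of $G$ on $\tilde\B := \B \oplus_{\ell^2} \mathbb{C}$ by
$$\tilde\pi_s(g)(x,t) = \bigl(\pi(g)x + s\,t\,b(g),\ t\bigr).$$
A short computation using the cocycle identity $b(gh) = b(g) + \pi(g)b(h)$ shows that $\tilde\pi_s$ is a strongly continuous homomorphism with $\tilde\pi_s(g)^{-1} = \tilde\pi_s(g^{-1})$. The key estimate is that, using $\Vert \pi(g)x \Vert = \Vert x \Vert$ together with $2ab \le \epsilon a^2 + \epsilon^{-1} b^2$ for the choice $\epsilon = \beta^2 - 1$, one gets $\Vert \tilde\pi_s(g)(x,t) \Vert^2 \le \beta^2 \Vert (x,t) \Vert^2$ for every $g \in K$ as soon as $s \le (\beta^2 - 1)/(\beta C)$; since $K$ is symmetric the same bound holds for the inverses. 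Fixing such an $s$, we conclude $(\tilde\pi_s, \tilde\B) \in \mathcal{F}(\mathcal{E}, K, \beta)$ --- this is the single point where the hypothesis $\B \oplus_{\ell^2} \mathbb{C} \in \mathcal{E}$ is used.

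Next I would feed this representation to the Kazhdan projection. Since $\Vert \tilde\pi_s(f_k) - \tilde\pi_s(p) \Vert \le \Vert f_k - p \Vert_{\mathcal{F}(\mathcal{E},K,\beta)} \to 0$ and $\tilde\pi_s(p)$ is the projection onto $\tilde\B^{\tilde\pi_s(G)}$, evaluating at the vector $(0,1)$ and using $\int f_k = 1$ gives
$$\tilde\pi_s(f_k)(0,1) = \Bigl( s\!\int_G f_k(g)\, b(g)\,dg,\ 1 \Bigr) \xrightarrow[k \to \infty]{} \tilde\pi_s(p)(0,1) =: (s\,v_\infty,\ 1)$$
for some $v_\infty \in \B$ (the Bochner integrals converge because each $f_k$ is compactly supported, and the last coordinate of the limit is $1$). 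The limit vector $(s\,v_\infty, 1)$ is $\tilde\pi_s(G)$-invariant, and unwinding the definition of $\tilde\pi_s$ yields $\tilde\pi_s(h)(s\,v_\infty, 1) = (s\,\alpha(h)v_\infty,\ 1)$; comparing with $(s\,v_\infty,1)$ forces $\alpha(h)v_\infty = v_\infty$ for every $h \in G$. Hence $v_\infty$ is a fixed point of $\alpha$, and since $\B \in \mathcal{E}$ was arbitrary, $G$ has property $(F\mathcal{E})$.

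I expect the main difficulty to be precisely the construction in the second step: the ``obvious'' linearization $\tilde\pi_1$ of an affine action on $\B \oplus_{\ell^2}\mathbb{C}$ has norm on $K$ that grows with the cocycle, so it need not lie in $\mathcal{F}(\mathcal{E}, K, \beta)$ for the $\beta$ produced by robust (T); inserting the scale $s$ (which merely conjugates $\tilde\pi_1$) is the device that pushes the representation inside the prescribed bound while keeping the distinguished vector $(0,1)$ at the right ``height'' for its $\tilde\pi_s(G)$-invariant image to descend to an honest fixed point of $\alpha$. The remaining ingredients --- strong continuity of $\tilde\pi_s$, the cocycle bookkeeping, and convergence of the Bochner integrals --- are routine.
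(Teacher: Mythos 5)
Your proof is correct, and it takes essentially the same route as the cited argument in \cite{OppRobust}: linearize the affine action on $\B\oplus_{\ell^2}\mathbb{C}$, rescale the cocycle part by a parameter $s$ small enough that $\sup_{g\in K}\Vert\tilde\pi_s(g)\Vert\leq\beta$, which places $\tilde\pi_s$ in $\mathcal{F}(\mathcal{E},K,\beta)$, and then apply the Kazhdan projection to the vector $(0,1)$ to extract a $G$-fixed point of the original affine action. The norm computation giving the threshold $s\leq(\beta^2-1)/(\beta C)$, the preservation of the last coordinate under $\tilde\pi_s(f_k)$, and the unwinding step $\tilde\pi_s(h)(sv_\infty,1)=(s\,\alpha(h)v_\infty,1)$ are all correct.
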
 

It is worth noting that under some extra assumptions, then opposite direction of this Proposition is also true (see \cite[Corollary 5.6]{SalleLocal}), but we will make no use of this fact.  

Last, we state some hereditary properties of property $(F \mathcal{E})$. 
\begin{proposition}
\label{f.p preserve under quotient prop}
If $G$ has property $(F \mathcal{E})$ and $G'$ is a quotient of $G$, then $G'$ has property $(F \mathcal{E})$.
\end{proposition}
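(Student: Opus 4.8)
The plan is the standard pullback argument. Let $q : G \twoheadrightarrow G'$ denote the quotient homomorphism; since $G'$ carries the quotient topology, $q$ is continuous (and surjective). Fix $\B \in \mathcal{E}$ and let $\alpha$ be a continuous affine isometric action of $G'$ on $\B$. First I would form the composed action $\alpha \circ q : G \to \operatorname{Isom}(\B)$. This is again affine isometric (composition of a homomorphism with an affine isometric action), and it is continuous because $\alpha$ is continuous and $q$ is continuous, so $g \mapsto (\alpha \circ q)(g).v$ is continuous for each $v \in \B$. Thus $\alpha \circ q$ is a legitimate continuous affine isometric action of $G$ on a space in $\mathcal{E}$.

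Next, invoke property $(F\mathcal{E})$ for $G$: there exists $v \in \B$ with $\alpha(q(g)).v = v$ for all $g \in G$. Since $q$ is surjective, $\{q(g) : g \in G\} = G'$, so $\alpha(h).v = v$ for every $h \in G'$, i.e.\ $v$ is a fixed point of $\alpha$. As $\B \in \mathcal{E}$ and the action $\alpha$ were arbitrary, $G'$ has property $(F\mathcal{E})$.

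I do not expect a genuine obstacle here: the statement is a soft functoriality property of $(F\mathcal{E})$ under surjections, and the only point meriting a remark is the continuity of $\alpha \circ q$ in the locally compact setting, which is immediate from continuity of the quotient map. (If one prefers to phrase things for discrete groups, continuity is vacuous and the argument is purely algebraic.)
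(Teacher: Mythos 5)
Your argument is exactly the paper's one-line proof, spelled out in full: pull back the action along the quotient map, apply $(F\mathcal{E})$ for $G$, and use surjectivity to see the fixed point works for $G'$. Correct and the same approach.
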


\begin{proof}
Every isometric action of $G'$ on a Banach space $\B$ induces an isometric action of $G$ on $\B$.
\end{proof}

Also,  under suitable assumptions, a group $G$ has property $(F \mathcal{E})$ if and only if a finite index subgroup $H < G$ has property $(F \mathcal{E})$:
\begin{proposition}\cite[Section 8]{BFGM}
\label{f.p. is equivalent to f.i. prop}
Let $G$ be as above and $H <G$ be a closed finite index subgroup of $G$.  For every super-reflexive space, if $H$ has property $(F \B)$, then $G$ has property $(F \B)$. Conversely,  let $\mathcal{E}$ be a class of Banach spaces such that every $\B \in \mathcal{E}$ is super-reflexive and such that $\mathcal{E}$ is closed under $\ell^2$ sums. Then if $G$ has property $(F \mathcal{E})$ it follows that $H$ has property $(F \mathcal{E})$.
\end{proposition}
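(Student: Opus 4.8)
The plan is to establish the two implications separately, in each case by bookkeeping over the coset space $G/H$; recall that a closed finite-index subgroup of a topological group is automatically open, so $G/H$ is finite and discrete.

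\emph{If $H$ has $(F\B)$ then $G$ has $(F\B)$.} Let $\B$ be super-reflexive, assume $H$ has property $(F\B)$, and let $\alpha$ be a continuous affine isometric action of $G$ on $\B$. Restricting $\alpha$ to $H$ and invoking $(F\B)$ for $H$, the set $\B^{H}$ of $H$-fixed vectors is non-empty; fix $x \in \B^{H}$. The crucial observation is that $\alpha(g)x$ depends only on the coset $gH$: if $g' = gh$ with $h \in H$ then $\alpha(g')x = \alpha(g)\alpha(h)x = \alpha(g)x$. Hence the orbit $Gx$ has at most $[G:H]$ elements, and $y := \tfrac{1}{[G:H]}\sum_{gH \in G/H}\alpha(g)x$ is a well-defined affine combination (independent of the choice of representatives). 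Since an affine map preserves affine combinations and left translation by $g_{0}$ permutes $G/H$, we get $\alpha(g_{0})y = y$ for every $g_{0} \in G$, so $y$ is a global fixed point.

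\emph{If $G$ has $(F\mathcal{E})$ then $H$ has $(F\mathcal{E})$.} Let $\B \in \mathcal{E}$ and let $\alpha$ be a continuous affine isometric action of $H$ on $\B$, with linear part $\pi$ (an isometric representation of $H$) and cocycle $b$, so $\alpha(h)v = \pi(h)v + b(h)$. I would $\ell^{2}$-induce $\alpha$ to $G$: fix coset representatives $g_{1} = e, g_{2}, \dots, g_{n}$ of $G/H$ and set $\widetilde{\B} = \B \oplus_{\ell^{2}} \cdots \oplus_{\ell^{2}} \B$ ($n$ summands), which lies in $\mathcal{E}$ since $\mathcal{E}$ is closed under $\ell^{2}$ sums. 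For $g \in G$ write $g g_{i} = g_{\sigma_{g}(i)} h_{g,i}$ with $\sigma_{g} \in \Sym(n)$ and $h_{g,i} \in H$; let $\widetilde{\pi}(g)$ permute the coordinates by $\sigma_{g}$ and apply $\pi(h_{g,i})$ in the $i$-th slot, and define $\widetilde{b}(g) \in \widetilde{\B}$ analogously from $b$. One checks that $\widetilde{\pi}$ is a strongly continuous isometric representation of $G$ — isometry from $\pi$ being isometric and permutations preserving the $\ell^{2}$-norm, continuity from $\sigma_{g} = \id$ and $h_{g,i} \to e$ as $g \to e$ (possible since $H$ is open) — and that $\widetilde{b}$ is a $1$-cocycle for $\widetilde{\pi}$, so $\widetilde{\alpha}(g)w = \widetilde{\pi}(g)w + \widetilde{b}(g)$ is a continuous affine isometric action of $G$ on $\widetilde{\B}$. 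By $(F\mathcal{E})$ for $G$ it has a fixed vector $w = (v_{1}, \dots, v_{n})$; reading off the first coordinate of $\widetilde{\alpha}(h)w = w$ for $h \in H$ (where $\sigma_{h}(1) = 1$ and $h_{h,1} = h$) gives $\alpha(h)v_{1} = v_{1}$, so $v_{1}$ witnesses $(F\B)$ for $H$.

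I expect the main obstacle to be the bookkeeping for the induced pair $(\widetilde{\pi}, \widetilde{b})$: choosing conventions so that the homomorphism identity $\widetilde{\pi}(g_{1}g_{2}) = \widetilde{\pi}(g_{1})\widetilde{\pi}(g_{2})$ and the cocycle identity $\widetilde{b}(g_{1}g_{2}) = \widetilde{b}(g_{1}) + \widetilde{\pi}(g_{1})\widetilde{b}(g_{2})$ both hold, and checking strong continuity of $\widetilde{\alpha}$. Neither elementary argument actually invokes reflexivity; super-reflexivity appears in the statement because this is the natural setting for the cohomological treatment of \cite{BFGM}, and one should simply verify that the direct approach above loses nothing. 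One should also read ``closed under $\ell^{2}$ sums'' as including finite sums, so that $\widetilde{\B} \in \mathcal{E}$; for a class such as all $L^{p}$ spaces, which is not closed under $\ell^{2}$ sums, one would instead use $\ell^{p}$-induction, permutations preserving the $\ell^{p}$-norm as well.
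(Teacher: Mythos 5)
Your proof is correct. Since the paper does not supply its own argument (it cites \cite[Section 8]{BFGM}), the relevant comparison is with the cohomological framework there, and yours is a genuinely more elementary route: a barycenter argument for $H\Rightarrow G$ and direct $\ell^2$-induction for $G\Rightarrow H$, in place of identifying $(F\B)$ with vanishing of first cohomology (which is where super-reflexivity enters in \cite{BFGM}). Both halves of your argument check out: the barycenter $y=\tfrac{1}{[G:H]}\sum_{gH}\alpha(g)x$ is well defined because $\alpha(g)x$ is constant on cosets, is a finite affine combination and hence is preserved by each affine isometry $\alpha(g_0)$, and the summands get permuted, so $\alpha(g_0)y=y$; no reflexivity is used. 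For the converse, your conventions $gg_i=g_{\sigma_g(i)}h_{g,i}$ give $\sigma_{g_1g_2}=\sigma_{g_1}\sigma_{g_2}$ and $h_{g_1g_2,i}=h_{g_1,\sigma_{g_2}(i)}h_{g_2,i}$, from which the homomorphism identity for $\widetilde\pi$ and the cocycle identity for $\widetilde b$ follow by direct computation; permutation preserves the $\ell^2$-norm, so $\widetilde\pi$ is isometric; and because $H$ is open (a closed finite-index subgroup is open) the map $g\mapsto(\sigma_g,(h_{g,i})_i)$ is locally constant in the first factor and continuous in the second, giving continuity of $\widetilde\pi$ and $\widetilde b$ at $e$, which suffices. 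Reading off the first coordinate of the $G$-fixed vector then yields the $H$-fixed point. Your closing observation is worth keeping explicit: super-reflexivity plays no role in this finite-index case, and only the closure of $\mathcal E$ under finite $\ell^2$-sums is used.
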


\subsection{Vector valued $\ell^2$ spaces}
\label{Vector valued subsec}

Given a finite set $V$, a function  $m: V \rightarrow \mathbb{R}_+$ and a Banach space $\B$, we define the \textit{vector valued space} $\ell^2 (V, m ; \B)$ to be the space of functions $\phi : V \rightarrow \B$, with the norm 
$$\Vert \phi \Vert_{\ell^2 (V,m ; \B)} = \left( \sum_{v \in V} m(v) \vert \phi (v) \vert^2 \right)^{\frac{1}{2}},$$
where $\vert . \vert$ is the norm of $\B$. We denote $\ell^2 (V,m) = \ell^2 (V,m ; \mathbb{C})$ and recall that $\ell^2 (V,m)$ is also a Hilbert space with the inner-product
$$\langle \phi, \psi \rangle = \sum_{v  \in V} m(v) \phi (v) \overline{\psi (v)}.$$

Let $T: \ell^2 (V,m) \rightarrow \ell^2 (V,m)$ be a linear operator and $(T_{v,u})_{u,v \in V} \in M_{\vert V \vert} (\mathbb{C})$ be the matrix such that for every $\phi \in \ell^2 (V,m)$ it holds that 
$$(T \phi) (v) = \sum_{u \in V} T_{v,u} \phi (u).$$
Define $T \otimes \id_{\B} : \ell^2 (V,m ; \B) \rightarrow \ell^2 (V,m ; \B)$ by the formula:
$$((T \otimes \id_{\B}) \phi) (v) = \sum_{u \in V} T_{v,u} \phi (u),$$
for every $\phi \in \ell^2 (V,m ; \B)$. We denote $\Vert T \otimes \id_{\B} \Vert_{B( \ell^2 (V,m ; \B))}$ to be the operator norm of $T \otimes \id_{\B}$.

The norm $\Vert T \otimes \id_\B \Vert_{B( \ell^2 (V,m ; \B))}$ is preserved under some operations on $\B$ - this is summed up in the following Lemma:
\begin{lemma}
\label{L2 norm stability}
Let $V$ be a finite set, $T$ a bounded operator on $\ell^2 (V, m)$ and $C >0$ constant. Let $\mathcal{E} = \mathcal{E} (C)$ be the class of Banach spaces defined as:
$$\mathcal{E} = \lbrace \B : \Vert T \otimes \id_\B \Vert_{B(\ell^2 (V, m ; \B))} \leq C \rbrace.$$
Then this class is closed under quotients, subspaces, $\ell^2$-sums and ultraproducts of Banach spaces, i.e., preforming any of these operations on Banach spaces in $\mathcal{E}$ yield a Banach space in $\mathcal{E}$. 
\end{lemma}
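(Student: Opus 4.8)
The plan is to handle all four operations at once by exploiting the fact that the assignment $\B \mapsto \ell^2(V,m;\B)$ is functorial for each of them. The point is that $\ell^2(V,m;\B)$ is just the weighted $\ell^2$-direct sum $\bigoplus_{v\in V}^{\ell^2}\B$ of $|V|$ copies of $\B$, each copy carrying weight $m(v)$, and that $T\otimes\id_\B$ acts through the \emph{fixed} scalar matrix $(T_{v,u})_{u,v\in V}$, independently of $\B$. So in each case I would first identify $\ell^2(V,m;\,\cdot\,)$ applied to the transformed space with the corresponding transform of $\ell^2(V,m;\B)$, check that $T\otimes\id$ is carried to the expected operator, and then read off the norm bound.

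First I would do subspaces and quotients. If $Y\subseteq\B$ is closed, then $\ell^2(V,m;Y)$ is the isometrically embedded subspace of $Y$-valued functions, and since the matrix formula sends $Y$-valued functions to $Y$-valued functions, $T\otimes\id_Y$ is just the restriction of $T\otimes\id_\B$ to an invariant subspace; hence $\Vert T\otimes\id_Y\Vert \leq \Vert T\otimes\id_\B\Vert \leq C$. For a quotient $W=\B/Z$, I would use the elementary fact that a finite $\ell^2$-sum of quotient maps is again a quotient map, so that the coordinatewise map $Q\colon\ell^2(V,m;\B)\to\ell^2(V,m;W)$ is a metric surjection; linearity of the quotient map gives $Q\circ(T\otimes\id_\B)=(T\otimes\id_W)\circ Q$. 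Then, lifting $\psi\in\ell^2(V,m;W)$ to $\phi$ with $\Vert\phi\Vert\leq(1+\varepsilon)\Vert\psi\Vert$ and using $(T\otimes\id_W)\psi=Q((T\otimes\id_\B)\phi)$, one gets $\Vert(T\otimes\id_W)\psi\Vert\leq C(1+\varepsilon)\Vert\psi\Vert$, and $\varepsilon\to0$ gives $\Vert T\otimes\id_W\Vert\leq C$.

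Next the $\ell^2$-sums and ultraproducts. If $\B=\bigoplus_j^{\ell^2}\B_j$, rearranging the nonnegative double sum defining the norm yields a natural isometry $\ell^2(V,m;\B)\cong\bigoplus_j^{\ell^2}\ell^2(V,m;\B_j)$ under which $T\otimes\id_\B$ becomes the diagonal operator $\bigoplus_j(T\otimes\id_{\B_j})$; since the norm of a diagonal operator on an $\ell^2$-sum equals the supremum of the component norms, $\Vert T\otimes\id_\B\Vert=\sup_j\Vert T\otimes\id_{\B_j}\Vert\leq C$. For an ultraproduct $\B=\prod_i^{\mathcal{U}}\B_i$ I would establish, as a separate sub-step, the isometric isomorphism $\ell^2(V,m;\prod_i^{\mathcal{U}}\B_i)\cong\prod_i^{\mathcal{U}}\ell^2(V,m;\B_i)$: a function on $V$ valued in the ultraproduct is, coordinate by coordinate on the finite set $V$, represented by $\mathcal{U}$-bounded families in the $\B_i$ which assemble (because $V$ is finite) into a single $\mathcal{U}$-bounded family $(\Phi_i)$ in $\ell^2(V,m;\B_i)$, with $\Vert\Phi_i\Vert\to\Vert\phi\Vert$ along $\mathcal{U}$; independence of the chosen representatives and surjectivity again use the finiteness of $V$ together with $m>0$. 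Under this identification $T\otimes\id_\B$ corresponds to the ultraproduct operator $\prod_i^{\mathcal{U}}(T\otimes\id_{\B_i})$, whose norm is at most $\lim_{\mathcal{U}}\Vert T\otimes\id_{\B_i}\Vert\leq C$ by multiplicativity of ultralimits of bounded nets.

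The genuinely routine part is all the functoriality bookkeeping; the one step where I expect to have to be careful is the ultraproduct identification $\ell^2(V,m;\prod^{\mathcal{U}}\B_i)\cong\prod^{\mathcal{U}}\ell^2(V,m;\B_i)$, since this is exactly where the hypothesis that $V$ is finite enters (for infinite $V$ these two spaces genuinely differ). I would therefore prove that isomorphism explicitly — spelling out well-definedness of the assembling map on ultraproduct representatives and its surjectivity — and only then deduce the norm estimate; everything else (restriction to invariant subspaces, the lifting argument for quotients, the diagonal-operator norm formula) is standard and I would keep it brief.
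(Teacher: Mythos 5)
Your proof is correct. The paper does not give its own argument: it cites a lemma of de la Salle for closure under quotients, subspaces and ultraproducts, and declares the $\ell^2$-sum case straightforward; your write-up supplies exactly the functorial identifications such a citation relies on (invariant subspaces for subspaces, coordinatewise metric surjections for quotients, the diagonal operator on an $\ell^2$-sum, and the isomorphism $\ell^2(V,m;\prod^{\mathcal{U}}\B_i)\cong\prod^{\mathcal{U}}\ell^2(V,m;\B_i)$ for ultraproducts), and you correctly identify finiteness of $V$ as the one place where the ultraproduct identification genuinely needs justification.
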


\begin{proof}
The fact that $\mathcal{E}$ is closed under quotients, subspaces and ultraproducts of Banach spaces was shown in \cite[Lemma 3.1]{Salle}. The fact that $\mathcal{E}$ is closed under $\ell^2$-sums is straight-forward and left for the reader.
\end{proof}

\subsection{Uniformly curved Banach spaces}

Uniformly curved Banach spaces were introduced by Pisier in \cite{Pisier2}:

\begin{definition}[Fully contractive operator]
An operator $T: \ell^2 (V,m) \rightarrow \ell^2 (V,m)$ is called fully contractive if for every Banach space $\B$ it holds that $\Vert T \otimes \id_{\B} \Vert_{B( \ell^2 (V,m ; \B))} \leq 1$. 
\end{definition}

\begin{definition}[Uniformly curved space]
\label{uc def}
Let $\B$ be a Banach space. The space $\B$ is called uniformly curved if for every $0 < \varepsilon \leq 1$ there is $\delta >0$ such that for every space $\ell^2 (V,m)$ and every fully contractive linear operator $T:\ell^2 (V,m) \rightarrow \ell^2 (V,m)$, if $\Vert T \Vert_{B(\ell^2 (V,m))} \leq \delta$, then $\Vert T \otimes \id_{\B} \Vert_{B(\ell^2 (V,m ; \B))} \leq \varepsilon$. 
\end{definition}


Given a monotone increasing function $\omega: (0,1] \rightarrow  (0,1]$ such that 
$$\lim_{t \rightarrow 0^{+}} \omega (t) = 0,$$
we denote 
$\mathcal{E}^{\text{u-curved}}_\omega$ to be the class of all uniformly curved Banach spaces $\B$ such that for every space $\ell^2 (V,m)$ and every fully contractive linear operator $T:\ell^2 (V,m) \rightarrow \ell^2 (V,m)$, if $\Vert T \Vert_{B(\ell^2 (V,m))} \leq \delta$, then $\Vert T \otimes \id_{\B} \Vert_{B(\ell^2 (V,m ; \B))} \leq \omega (\delta)$. 

Applying Lemma \ref{L2 norm stability} on $\mathcal{E}^{\text{u-curved}}_\omega$ defined above yields the following Corollary:
\begin{corollary}
For any monotone increasing function $\omega: (0,1] \rightarrow  (0,1]$ such that $\lim_{t \rightarrow 0^{+}} \omega (t) = 0$, the class $\mathcal{E}^{\text{u-curved}}_\omega$ defined above is closed under quotients, subspaces, $\ell^2$-sums and ultraproducts of Banach spaces.
\end{corollary}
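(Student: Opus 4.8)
The plan is to exhibit $\mathcal{E}^{\text{u-curved}}_\omega$ as an intersection of classes of the form treated in Lemma \ref{L2 norm stability}, and then to use the purely formal fact that each of the four closure properties from that lemma is inherited by arbitrary intersections of classes of Banach spaces.

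First I would note that, because $\lim_{t \to 0^{+}} \omega(t) = 0$, the clause ``$\B$ is uniformly curved'' in the definition of $\mathcal{E}^{\text{u-curved}}_\omega$ is automatically implied by the accompanying quantitative estimate: given $0 < \varepsilon \le 1$, choose $\delta > 0$ with $\omega(\delta) \le \varepsilon$; then any $\B$ satisfying the estimate has $\Vert T \otimes \id_{\B} \Vert_{B(\ell^2(V,m;\B))} \le \omega(\delta) \le \varepsilon$ whenever $\Vert T \Vert_{B(\ell^2(V,m))} \le \delta$. Hence
$$\mathcal{E}^{\text{u-curved}}_\omega = \bigcap\ \bigl\{\, \B : \Vert T \otimes \id_{\B} \Vert_{B(\ell^2(V,m;\B))} \le \omega(\delta) \,\bigr\},$$
where the intersection runs over all triples $\bigl((V,m), T, \delta\bigr)$ consisting of a finite set $V$ with a weight $m : V \to \mathbb{R}_+$, a fully contractive operator $T$ on $\ell^2(V,m)$, and a real number $\delta \in (0,1]$ with $\Vert T \Vert_{B(\ell^2(V,m))} \le \delta$. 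For each fixed such triple, the set on the right-hand side is precisely a class of the form $\mathcal{E}(C)$ considered in Lemma \ref{L2 norm stability}, with the finite set $V$, the weight $m$, the operator $T$, and the constant $C = \omega(\delta) > 0$.

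By Lemma \ref{L2 norm stability}, every one of these classes is closed under quotients, subspaces, $\ell^2$-sums and ultraproducts of Banach spaces. It remains only to observe that if a collection of classes of Banach spaces is such that each member of the collection is closed under one of these operations, then their intersection is closed under that operation as well: any Banach space produced by the operation from inputs lying in the intersection has all of its inputs in each member class, hence the output lies in each member class, hence in the intersection. Applying this to the four operations gives the corollary. I do not anticipate a genuine difficulty here; the only step that is not completely routine is the first reduction, namely checking that the qualitative ``uniformly curved'' hypothesis is subsumed by the modulus bound so that the intersection description above is valid, and this follows at once from $\lim_{t \to 0^{+}} \omega(t) = 0$.
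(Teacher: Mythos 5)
Your proof is correct and is essentially the argument the paper intends by its one-line remark that the corollary follows from ``applying Lemma~\ref{L2 norm stability}'': you write $\mathcal{E}^{\text{u-curved}}_\omega$ as an intersection of classes $\mathcal{E}(C)$ from that lemma, note that intersections inherit closure under each of the four operations, and correctly observe that the assumption $\lim_{t\to 0^+}\omega(t)=0$ makes the qualitative ``uniformly curved'' clause redundant so that the intersection description is exact.
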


\begin{proposition}
\label{L contractive prop}
Let $T :\ell^2 (V,m) \rightarrow \ell^2 (V,m)$ be a linear operator and $L\geq 1$, $0< \delta \leq 1$ be constants such that:
\begin{enumerate}
\item It holds that $\Vert T \Vert_{B(\ell^2 (V,m))} \leq \delta$.
\item For every Banach space $\B$, $\Vert T \otimes \id_{\B} \Vert_{B(\ell^2 (V,m ; \B))} \leq L$.
\end{enumerate}
Then for every monotone increasing function $\omega: (0,1] \rightarrow  (0,1]$ such that $\lim_{t \rightarrow 0^{+}} \omega (t) = 0$ and every $\B \in \mathcal{E}^{\text{u-curved}}_\omega$, $\Vert T \otimes \id_{\B} \Vert_{B(\ell^2 (V,m ; \B))} \leq L \omega (\delta)$.
\end{proposition}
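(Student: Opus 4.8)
The plan is to normalize the operator $T$ so that Definition \ref{uc def} applies directly, and then track how the normalization constant propagates through both the Hilbert-space bound and the vector-valued bound. Set $S = \frac{1}{L} T$. By hypothesis (2), for every Banach space $\B$ we have $\Vert S \otimes \id_{\B}\Vert_{B(\ell^2(V,m;\B))} = \frac{1}{L}\Vert T\otimes\id_{\B}\Vert_{B(\ell^2(V,m;\B))} \leq 1$, so $S$ is fully contractive in the sense of the definition preceding \ref{uc def}. Also, since $L \geq 1$ and $\Vert T\Vert_{B(\ell^2(V,m))} \leq \delta$, we get $\Vert S\Vert_{B(\ell^2(V,m))} = \frac{1}{L}\Vert T\Vert_{B(\ell^2(V,m))} \leq \delta$.

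Now fix $\B \in \mathcal{E}^{\text{u-curved}}_\omega$. By the defining property of the class $\mathcal{E}^{\text{u-curved}}_\omega$ applied to the fully contractive operator $S$ with $\Vert S\Vert_{B(\ell^2(V,m))} \leq \delta$, we obtain
$$\Vert S \otimes \id_{\B}\Vert_{B(\ell^2(V,m;\B))} \leq \omega(\delta).$$
Multiplying back by $L$ and using $S \otimes \id_{\B} = \frac{1}{L}(T \otimes \id_{\B})$ gives $\Vert T \otimes \id_{\B}\Vert_{B(\ell^2(V,m;\B))} \leq L\,\omega(\delta)$, which is the claim.

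There is really no serious obstacle here; the only point requiring a little care is that the definition of $\mathcal{E}^{\text{u-curved}}_\omega$ in the excerpt is stated somewhat telegraphically — it should be read as quantifying over all fully contractive $T$ with $\Vert T\Vert \leq \delta$ for the \emph{specific} $\delta$ attached to the class (equivalently, the modulus $\omega$ controls the vector-valued norm uniformly in terms of the scalar norm $\delta$). Under that reading the argument above is complete. If instead one wants $\omega$ to be a genuine modulus valid for all small scalar norms, one applies the same rescaling with $\delta$ replaced by $\Vert T\Vert_{B(\ell^2(V,m))}/L$ and invokes monotonicity of $\omega$ to bound $\omega\!\left(\Vert T\Vert_{B(\ell^2(V,m))}/L\right) \leq \omega(\delta)$; the conclusion $\Vert T\otimes\id_\B\Vert \leq L\,\omega(\delta)$ is unchanged. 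Either way, the mechanism is just: divide by $L$, apply uniform curvature, multiply back by $L$.
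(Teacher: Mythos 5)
Your proof is correct and takes essentially the same approach as the paper: normalize $T$ by $L$ to make it fully contractive, invoke the defining property of $\mathcal{E}^{\text{u-curved}}_\omega$, and multiply back. The paper's version uses the tighter bound $\Vert \tfrac{1}{L}T\Vert \leq \delta/L$ and then monotonicity of $\omega$ to pass from $\omega(\delta/L)$ to $\omega(\delta)$, which is exactly the alternative reading you describe at the end.
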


\begin{proof}
We note that $\frac{1}{L} T$ is a fully contractive operator such that 
$$\Vert \frac{1}{L} T \Vert_{B(\ell^2 (V,m))} \leq \frac{\delta}{L}.$$ 
Thus, by the definition of $\mathcal{E}^{\text{u-curved}}_\omega$ it follows for every $\B \in \mathcal{E}^{\text{u-curved}}_\omega$ that 
$$\Vert (\frac{1}{L}) T \otimes \id_{\B} \Vert_{B(\ell^2 (V,m ; \B))} \leq \omega (\frac{\delta}{L}),$$
and thus 
$$\Vert T \otimes \id_{\B} \Vert_{B(\ell^2 (V,m ; \B))} \leq L \omega (\frac{\delta}{L}) \leq L \omega (\delta),$$
where the last inequality is due to the fact that $L \geq 1$ and $\omega$ is monotone increasing.
\end{proof}

\subsection{Strictly $\theta$-Hilbertian spaces}
\label{subsect Strictly theta-Hilbertian spaces}

Here we will describe a special class of uniformly curved Banach spaces that contains all (commutative and non-commutative) $L^p$ spaces. 

Two Banach spaces $\B_0, \B_1$ form a \textit{compatible pair} $(\B_0,\B_1)$ if they are continuously linear embedded in the same topological vector space. The idea of complex interpolation is that given a compatible pair $(\B_0,\B_1)$ and a constant $0 \leq \theta \leq 1$, there is a method to produce a new Banach space $[\B_0, \B_1]_\theta$ as a ``convex combination'' of $\B_0$ and $\B_1$. We will not review this method here, and the interested reader can find more information on interpolation in \cite{InterpolationSpaces}.

This brings us to consider the following definition due to Pisier \cite{Pisier}: a Banach space $\B$ is called \textit{strictly $\theta$-Hilbertian} for $0 < \theta \leq 1$, if there is a compatible pair $(\B_0,\B_1)$ with $\B_1$ a Hilbert space such that $\B = [\B_0, \B_1]_\theta$. Examples of strictly $\theta$-Hilbertian spaces are $L^p$ space and non-commutative $L^p$ spaces (see \cite{PX} for definitions and properties of non-commutative $L^p$ spaces), where in these cases $\theta = \frac{2}{p}$ if $2 \leq  p  < \infty $ and $\theta = 2 - \frac{2}{p}$ if $1 < p \leq 2$. 

For our use, it will be important to bound the norm of an operator of the form $T \otimes \id_{\B}$ given that $\B$ is an interpolation space.

\begin{lemma} \cite[Lemma 3.1]{Salle}
\label{interpolation fact}
Let $(\B_0,\B_1)$ be a compatible pair , $V$ be a finite set, $m: V \rightarrow \mathbb{R}_+$ be a function and $T \in B(\ell^2 (V,m))$ be an operator. Then for every $0 \leq \theta \leq 1$,
$$\Vert T \otimes \id_{[\B_0, \B_1]_\theta} \Vert_{B(\ell^2 (V,m; [\B_0, \B_1]_\theta))} \leq \Vert T \otimes \id_{\B_0} \Vert_{B(\ell^2 (V,m ; \B_0))}^{1-\theta} \Vert T \otimes \id_{\B_1} \Vert_{B(\ell^2 (V,m; \B_1))}^{\theta},$$
where $[\B_0, \B_1]_\theta$ is the interpolation of $\B_0$ and $\B_1$.
\end{lemma}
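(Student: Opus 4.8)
The plan is to deduce the inequality from the classical complex interpolation theorem for operators, applied to $T\otimes\id$ acting on a compatible pair of vector-valued $\ell^2$-spaces; this is in essence the argument of \cite[Lemma 3.1]{Salle}.

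First I would note that since $(\B_0,\B_1)$ is a compatible pair, both spaces embed continuously and linearly in a common topological vector space $\mathcal{X}$, so $\ell^2(V,m;\B_0)$ and $\ell^2(V,m;\B_1)$ both embed continuously and linearly in the space of $\mathcal{X}$-valued functions on the finite set $V$; hence $\big(\ell^2(V,m;\B_0),\ell^2(V,m;\B_1)\big)$ is itself a compatible pair. Next, the operator $T\otimes\id$ is given on each of $\ell^2(V,m;\B_0)$, $\ell^2(V,m;\B_1)$ and $\ell^2(V,m;[\B_0,\B_1]_\theta)$ by the \emph{same} scalar matrix $(T_{v,u})_{u,v\in V}$, so on the intersection $\ell^2(V,m;\B_0)\cap\ell^2(V,m;\B_1)$ it is a single well-defined operator landing in the corresponding intersection, i.e. it is a morphism of the above compatible pair, with norm exactly $\Vert T\otimes\id_{\B_0}\Vert$ on the first space and $\Vert T\otimes\id_{\B_1}\Vert$ on the second (by the very definition of these quantities).

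The heart of the argument is the isometric identification
$$\big[\ell^2(V,m;\B_0),\ell^2(V,m;\B_1)\big]_\theta \;=\; \ell^2\big(V,m;[\B_0,\B_1]_\theta\big),$$
i.e. that complex interpolation commutes with the functor $\ell^2(V,m;-)$. I would obtain this either by quoting the standard theory of complex interpolation of $L^p$-spaces of Banach-space-valued functions (see \cite{InterpolationSpaces}), specialized to the finite measure space $(V,m)$ and $p=2$, or --- since $\ell^2(V,m;\B)$ is simply $\B^{V}$ renormed by a fixed positive-definite weighting that does not depend on $\B$ --- by checking directly that the Calder\'on spaces of bounded analytic functions on the strip attached to the two sides coincide. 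Granting this, Calder\'on's interpolation theorem for operators, applied to the morphism $T\otimes\id$, shows that its norm on $\big[\ell^2(V,m;\B_0),\ell^2(V,m;\B_1)\big]_\theta$ is at most $\Vert T\otimes\id_{\B_0}\Vert^{1-\theta}\,\Vert T\otimes\id_{\B_1}\Vert^{\theta}$; combining this with the identification gives exactly the claimed bound on $\Vert T\otimes\id_{[\B_0,\B_1]_\theta}\Vert_{B(\ell^2(V,m;[\B_0,\B_1]_\theta))}$.

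The one point that requires genuine care --- the main obstacle --- is the isometric identification in the previous paragraph: it is essential that it be an \emph{isometry}, not merely an isomorphism, since any multiplicative constant would survive into the final estimate. This is precisely where finiteness of $V$ and positivity of $m$ are used; for an infinite index set the same identity holds but rests on more delicate $L^2$-valued interpolation results. Everything else --- the compatible-pair bookkeeping and the invocation of the operator interpolation theorem --- is routine.
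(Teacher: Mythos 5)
The paper does not give a proof of this lemma; it simply cites it as \cite[Lemma 3.1]{Salle}. Your argument --- identifying $\big[\ell^2(V,m;\B_0),\ell^2(V,m;\B_1)\big]_\theta$ isometrically with $\ell^2\big(V,m;[\B_0,\B_1]_\theta\big)$ and then applying Calder\'on's operator interpolation theorem to the morphism $T\otimes\id$ of the compatible pair --- is the standard proof of this fact and is correct, and it is in substance the argument of the cited reference. You rightly flag the isometric (not merely bounded-isomorphic) nature of the identification as the point that carries the weight; for a finite index set this is immediate from the vector-valued $L^p$ interpolation theorem (or by direct comparison of the Calder\'on spaces, as you note), so the proof is complete.
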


This Lemma has the following Corollary that shows that strictly $\theta$-Hilbertian spaces are uniformly curved (see also \cite[Lemma 3.1]{Salle}):

\begin{corollary}
\label{norm bound on theta-Hil coro}
Let $\B$ be a strictly $\theta$-Hilbertian space with $0 < \theta \leq 1$, $V$ be a finite set, $m: V \rightarrow \mathbb{R}_+$ be a function and $0 < \delta < 1$ be a constant. Assume that $T \in B(\ell^2 (V,m))$ is a fully contractive operator such that $\Vert T \Vert_{B(\ell^2 (V, m))} \leq \delta$. Then $\Vert T \otimes \id_{\B} \Vert_{B(\ell^2 (V, m ; \B))} \leq \delta^{\theta}$.

In other words, if $\B$ is strictly $\theta$-Hilbertian space with $0 < \theta \leq 1$, then for $\omega (t) = t^{\theta}$, we have that $\B \in \mathcal{E}^{\text{u-curved}}_\omega$.
\end{corollary}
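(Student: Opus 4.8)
The plan is to derive this directly from the complex interpolation inequality of Lemma \ref{interpolation fact}, the definition of a strictly $\theta$-Hilbertian space, and the elementary fact that tensoring a Hilbert-space operator with a Hilbert space does not change its norm. Since $\B$ is strictly $\theta$-Hilbertian, by definition there is a compatible pair $(\B_0,\B_1)$ with $\B_1$ a Hilbert space such that $\B = [\B_0,\B_1]_\theta$. I would then apply Lemma \ref{interpolation fact} to the operator $T$ and this pair, obtaining
$$\Vert T \otimes \id_{\B} \Vert_{B(\ell^2 (V,m; \B))} \leq \Vert T \otimes \id_{\B_0} \Vert_{B(\ell^2 (V,m ; \B_0))}^{1-\theta} \, \Vert T \otimes \id_{\B_1} \Vert_{B(\ell^2 (V,m; \B_1))}^{\theta}.$$

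Next I would bound the two factors separately. For the first factor, since $T$ is fully contractive, by definition $\Vert T \otimes \id_{\B_0} \Vert_{B(\ell^2 (V,m ; \B_0))} \leq 1$, so $\Vert T \otimes \id_{\B_0} \Vert^{1-\theta} \leq 1$. For the second factor, $\B_1$ is a Hilbert space, and for a Hilbert space $\mathcal{H}$ one has the canonical isometric identification $\ell^2(V,m;\mathcal{H}) \cong \ell^2(V,m) \otimes_2 \mathcal{H}$ under which $T \otimes \id_{\mathcal{H}}$ acts as the Hilbert-space tensor product of $T$ with the identity; hence $\Vert T \otimes \id_{\B_1} \Vert_{B(\ell^2(V,m;\B_1))} = \Vert T \Vert_{B(\ell^2(V,m))} \leq \delta$. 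Combining the two bounds gives $\Vert T \otimes \id_{\B} \Vert_{B(\ell^2 (V,m; \B))} \leq 1^{1-\theta}\cdot\delta^{\theta} = \delta^{\theta}$, which is the first assertion.

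For the ``in other words'' part, set $\omega(t) = t^{\theta}$; this is monotone increasing on $(0,1]$ with values in $(0,1]$, and since $\theta > 0$ we have $\lim_{t\to 0^+}\omega(t) = 0$, so $\omega$ is an admissible modulus. The first part says precisely that every fully contractive $T$ on any $\ell^2(V,m)$ with $\Vert T\Vert \leq \delta$ satisfies $\Vert T \otimes \id_\B\Vert \leq \delta^{\theta} = \omega(\delta)$, which is exactly the defining condition for $\B \in \mathcal{E}^{\text{u-curved}}_\omega$; in particular $\B$ is uniformly curved.

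I do not expect a genuine obstacle here: the only non-formal inputs are Lemma \ref{interpolation fact} (already available) and the identification $\ell^2(V,m;\mathcal{H}) \cong \ell^2(V,m)\otimes_2\mathcal{H}$ with preservation of operator norms, which is standard. The one point worth stating carefully is that this identification is isometric for the weighted $\ell^2$ norm used throughout (the weights $m(v)$ cause no difficulty, since they only rescale the orthonormal basis), so that $\Vert T\otimes\id_{\B_1}\Vert$ really equals $\Vert T\Vert_{B(\ell^2(V,m))}$ rather than merely being bounded by it.
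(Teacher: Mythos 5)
Your proof is correct and follows essentially the same route as the paper's: apply Lemma \ref{interpolation fact} to the pair $(\B_0,\B_1)$ witnessing that $\B$ is strictly $\theta$-Hilbertian, bound the $\B_0$ factor by $1$ via full contractivity, and bound the $\B_1$ factor by $\delta$ using that tensoring with a Hilbert space preserves the operator norm. The paper states the Hilbert-space bound without the explicit tensor-product justification you supply, but the argument is the same.
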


\begin{proof}
For every Hilbert space $\B_1$ we have that $\Vert T \otimes \id_{\B_1} \Vert_{B(\ell^2 (V, m ; \B_1))} \leq \delta$ and thus the assertion stated above follows from Lemma \ref{interpolation fact}.
\end{proof}

Combining Lemma \ref{L2 norm stability} with the above Corollary yields: 

\begin{corollary}
\label{theta-hil is in uni-curv cor}
For a constant $0 < \theta_0 \leq 1$, denote $\mathcal{E}_{\theta_0}$ to be the smallest class of Banach spaces that contains all strictly $\theta$-Hilbertian Banach spaces for all $\theta_0 \leq \theta \leq 1$ and is closed under subspaces, quotients, $\ell^2$-sums and ultraproducts of Banach spaces. Then for every  $0 < \theta_0 \leq 1$, we have that $\mathcal{E}_{\theta_0} \subseteq \mathcal{E}^{\text{u-curved}}_{\omega (t) = t^{\theta_0}}$. 
\end{corollary}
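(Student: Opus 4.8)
The plan is to show that the generating set of $\mathcal{E}_{\theta_0}$ (namely, all strictly $\theta$-Hilbertian spaces with $\theta_0 \le \theta \le 1$) is already contained in $\mathcal{E}^{\text{u-curved}}_{\omega(t) = t^{\theta_0}}$, and then to invoke Lemma \ref{L2 norm stability} to conclude that the smallest class closed under subspaces, quotients, $\ell^2$-sums and ultraproducts containing these spaces stays inside $\mathcal{E}^{\text{u-curved}}_{\omega(t) = t^{\theta_0}}$. For the first part, fix a strictly $\theta$-Hilbertian space $\B$ with $\theta_0 \le \theta \le 1$. By Corollary \ref{norm bound on theta-Hil coro}, $\B \in \mathcal{E}^{\text{u-curved}}_{\omega(t) = t^{\theta}}$; that is, for every finite $\ell^2(V,m)$ and every fully contractive $T$ with $\Vert T \Vert_{B(\ell^2(V,m))} \le \delta$ we have $\Vert T \otimes \id_\B \Vert_{B(\ell^2(V,m;\B))} \le \delta^{\theta}$. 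Since $0 < \delta \le 1$ and $\theta \ge \theta_0$, monotonicity of $t \mapsto \delta^{t}$ (decreasing in the exponent for $\delta \le 1$) gives $\delta^{\theta} \le \delta^{\theta_0} = \omega(\delta)$, so $\B \in \mathcal{E}^{\text{u-curved}}_{\omega(t) = t^{\theta_0}}$ as well.

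Next I would check that $\mathcal{E}^{\text{u-curved}}_{\omega(t)=t^{\theta_0}}$ is itself closed under subspaces, quotients, $\ell^2$-sums and ultraproducts: this is exactly the content of the Corollary following Definition \ref{uc def}, applied with $\omega(t) = t^{\theta_0}$ (note $t \mapsto t^{\theta_0}$ is monotone increasing on $(0,1]$ and tends to $0$ as $t \to 0^+$, so it is an admissible choice of $\omega$). Therefore $\mathcal{E}^{\text{u-curved}}_{\omega(t)=t^{\theta_0}}$ is a class closed under all four operations that contains every strictly $\theta$-Hilbertian space for $\theta_0 \le \theta \le 1$. Since $\mathcal{E}_{\theta_0}$ is by definition the \emph{smallest} such class, we get $\mathcal{E}_{\theta_0} \subseteq \mathcal{E}^{\text{u-curved}}_{\omega(t)=t^{\theta_0}}$, which is the assertion.

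There is essentially no obstacle here — the argument is a formal consequence of Corollary \ref{norm bound on theta-Hil coro}, the stability corollary for $\mathcal{E}^{\text{u-curved}}_\omega$, and a minimality argument. The only point that requires a moment's care is the direction of the inequality $\delta^{\theta} \le \delta^{\theta_0}$: one must remember that since $\delta \le 1$, raising to a \emph{larger} exponent makes the number \emph{smaller}, so a space that is strictly $\theta$-Hilbertian for larger $\theta$ is "better behaved" (smaller modulus) and is in particular captured by the modulus $\omega(t) = t^{\theta_0}$ coming from the smallest allowed $\theta$.
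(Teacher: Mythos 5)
Your proof is correct and follows exactly the route the paper intends: apply Corollary \ref{norm bound on theta-Hil coro} (with the monotonicity observation $\delta^{\theta}\le\delta^{\theta_0}$ for $\delta\le 1$, $\theta\ge\theta_0$) to place the generators of $\mathcal{E}_{\theta_0}$ inside $\mathcal{E}^{\text{u-curved}}_{\omega(t)=t^{\theta_0}}$, then invoke the closure of $\mathcal{E}^{\text{u-curved}}_\omega$ under the four operations (the corollary following Lemma \ref{L2 norm stability}) together with minimality of $\mathcal{E}_{\theta_0}$. The paper states the result as an immediate combination of those two ingredients; you have simply written out the same argument.
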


\begin{remark}
A deep result of Pisier shows that the converse of the above Corollary is ``almost true'' if one considers arcwise $\theta_0$-Hilbertian spaces (see definition in \cite[Section 6]{Pisier2}). Namely, by \cite[Corollary 6.7]{Pisier2}, for every $\theta_0 < \theta \leq 1$ it holds that every Banach space in $\mathcal{E}^{\text{u-curved}}_{\omega (t) = t^{\theta}}$ is a subquotient of an arcwise $\theta_0$-Hilbertian space. We will not define arcwise $\theta_0$-Hilbertian spaces here and we will make no use of this fact.
\end{remark}

\subsection{Angle between projections}

The notion of an angle between projection was defined by the author in \cite{OppRobust} and further developed in \cite{OppVanBan} and\cite{OppAngle}. Below, we give the definitions and results on this subject that are needed for this paper. 

\begin{definition}
\label{Angle between proj def}
Let $\B$ be a Banach space and let $P_1, P_2$ be projections in $B (\B)$.  Assume that there is a projection $P_{1,2}$ on $\im (P_{1}) \cap \im (P_{2})$, such that $P_{1,2} P_{1} = P_{1,2}, P_{1,2} P_{2} = P_{1,2}$. Define the cosine of the angle between $P_1, P_2$ (with respect to $P_{1,2}$) as 
$$\cos_{P_{1,2}} (\angle (P_1,P_2)) = \max \lbrace \Vert P_1 P_2 - P_{1,2} \Vert,  \Vert P_2 P_1 - P_{1,2} \Vert \rbrace.$$
\end{definition}

We note that the angle between $P_1, P_2$ depends on the choice of $P_{1,2}$ and different choices yield different angles (see \cite[Example 2]{BKRS}).  However, to avoid cumbersome notation,  when $P_{1,2}$ is obvious from the context, we will denote $\cos (\angle (P_1,P_2))$ instead of $\cos_{P_{1,2}} (\angle (P_1,P_2))$.

\begin{theorem}\cite[Theorem 3.12]{OppRobust}
\label{angle criterion thm}
Let $\B$ be a Banach space and let $P_0,...,P_n$ be projections in $B(\B)$ ($n \geq 1$). Assume that for every $0 \leq j_1 < j_2 \leq n$, there is a projection $P_{j_1,j_2}$ on $\im (P_{j_1}) \cap \im (P_{j_2})$, such that $P_{j_1,j_2} P_{j_1} = P_{j_1,j_2}, P_{j_1,j_2} P_{j_2} = P_{j_1,j_2}$. Denote $T = \sum_k \frac{1}{n+1} P_k$. Assume further that there are constants 
$$\gamma < \frac{1}{8n-3} \text{ and } \beta < 1+ \frac{1-(8n-3)\gamma}{n-1 + (3n-1)\gamma},$$
such that 
$$\max_{0 \leq j \leq n} \Vert P_j \Vert \leq \beta$$
and
$$\max_{0 \leq j_1 < j_2 < \leq n} \cos  (\angle (P_{j_1},P_{j_2})) \leq \gamma.$$
Then there is a projection $T^\infty$ on $\bigcap_{j=0}^n \im (P_j)$ and constants $0\leq r (\gamma, \beta )  <1, C (\gamma, \beta )  >0$, such that for every $i$, 
$$\Vert T^i - T^\infty \Vert \leq Cr^i$$
and in particular, $T^i$ converges to $T^{\infty}$ in the operator norm. 
\end{theorem}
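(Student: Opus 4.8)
The plan is to show that the powers $T^i$ converge in the operator norm at a geometric rate; the limit $T^\infty$ is then automatically a projection commuting with $T$ and fixed by $T$, and a final argument identifies its range with $W:=\bigcap_{j=0}^n\im(P_j)$.

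I would first record the elementary consequences of the hypotheses. Since $\im(P_{j_1,j_2})\subseteq\im(P_{j_i})$ and $P_{j_i}$ is the identity on its range, $P_{j_i}P_{j_1,j_2}=P_{j_1,j_2}$; combined with the assumed $P_{j_1,j_2}P_{j_i}=P_{j_1,j_2}$ this says $P_{j_1,j_2}$ is absorbed by $P_{j_1}$ and $P_{j_2}$ from either side. Hence $\Vert P_{j_1,j_2}\Vert\le\Vert P_{j_1}P_{j_2}\Vert+\gamma\le\beta^2+\gamma$, so every operator in sight is norm-bounded by an explicit function of $\beta,\gamma,n$. These absorption relations, together with $\Vert P_{j_1}P_{j_2}-P_{j_1,j_2}\Vert\le\gamma$, are the only geometric inputs.

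It is instructive to treat $n=1$ first. One checks that $R:=\frac12\big((P_0-P_{0,1})+(P_1-P_{0,1})\big)$ satisfies $R\,P_{0,1}=P_{0,1}R=0$ and $R^2=\frac12R+E$ with $\Vert E\Vert\le\frac{\gamma}{2}$, so $T=P_{0,1}+R$ and $T^i=P_{0,1}+R^i$; the recursion $\Vert R^{i+1}\Vert\le\frac12\Vert R^i\Vert+\frac{\gamma}{2}\Vert R^{i-1}\Vert$ has dominant root $<1$ and gives $\Vert R^i\Vert\le Cr^i$, hence $T^\infty=P_{0,1}$. For general $n$ the obstruction is that the $\binom{n+1}{2}$ pairwise meet projections $P_{j,k}$ do not assemble into a single projection onto $W$, so no such clean splitting of $T$ is available. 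One still has the identity $T^2=\frac{1}{n+1}T+\frac{1}{(n+1)^2}\sum_{j\ne k}P_{j,k}+S$ with $\Vert S\Vert\le\frac{n}{n+1}\gamma$, but the term $\sum_{j\ne k}P_{j,k}$ is no longer small. The key is that, on multiplying repeatedly by $T$ and invoking the absorption relations, the various pairwise meets get forced together and in the limit collapse onto a common projection onto $W$. Concretely, I would bound $\Vert T^{i+1}-T^i\Vert$ by splitting off, at each stage, the part already behaving like a projection onto $W$ and a remainder, and show the remainder obeys a two-term linear recursion whose coefficients are explicit functions of $n,\gamma,\beta$ (coming from the number of cross terms, the $\gamma$-errors, and the $\beta$-growth of norms of iterated products). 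This is where the quantitative hypotheses enter: the thresholds $\gamma<\frac{1}{8n-3}$ and $\beta<1+\frac{1-(8n-3)\gamma}{n-1+(3n-1)\gamma}$ are exactly what is needed for the dominant root of that recursion to be $<1$. One then obtains $\Vert T^{i+1}-T^i\Vert\le Cr^i$, hence $M:=\sup_i\Vert T^i\Vert<\infty$ and $T^\infty:=\lim_iT^i\in B(\B)$ with $\Vert T^i-T^\infty\Vert\le\frac{C}{1-r}r^i$.

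It remains to clean up. Using $M<\infty$ for joint continuity of multiplication on bounded sets, $T^\infty T^\infty=\lim_{i,j}T^{i+j}=T^\infty$ and $TT^\infty=\lim_iT^{i+1}=T^\infty=T^\infty T$, so $T^\infty$ is a projection fixed by $T$; thus $\im(T^\infty)\subseteq\Ker(I-T)$, and since $Tx=x$ forces $T^ix=x$ and hence $x\in\im(T^\infty)$, in fact $\im(T^\infty)=\Ker(I-T)$. The inclusion $W\subseteq\Ker(I-T)$ is immediate; for the reverse one uses the hypotheses once more — applying $P_k$ to $Tx=x$ gives $nP_kx=\sum_{j\ne k}P_kP_jx$, and substituting $P_kP_j=P_{k,j}+(P_kP_j-P_{k,j})$ and using the absorption identities together with the $\gamma$-smallness of the errors yields, under the same thresholds, an inequality forcing $P_kx=x$ for every $k$, i.e. $\Ker(I-T)\subseteq W$. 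I expect the main obstacle to be the quantitative bookkeeping in the induction of the previous paragraph: because $\beta$ may exceed $1$, norms of long products of the $P_j$ can grow, and one must show the $\gamma$-smallness of the angles dominates this growth uniformly in $i$; arranging the split into ``$W$-part'' and ``remainder'' so that all the resulting constants fall below the stated thresholds is the delicate point, and it is this optimization that produces the numbers $8n-3$, $3n-1$, $n-1$.
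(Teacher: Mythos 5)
This statement is cited from \cite{OppRobust}[Theorem 3.12] and is not proved in the present paper, so there is no in-text proof to compare against; I can only assess the proposal on its own. Your outline has the right shape --- geometric convergence of $T^i$ followed by identification of the limit with the joint fixed subspace --- and the $n=1$ computation is sound: with $R=\frac{1}{2}\big((P_0-P_{0,1})+(P_1-P_{0,1})\big)$ one has $T=P_{0,1}+R$, $RP_{0,1}=P_{0,1}R=0$, $R^2=\frac{1}{2}R+E$ with $\Vert E\Vert\le\gamma/2$, and the recursion $\Vert R^{i+1}\Vert\le\frac{1}{2}\Vert R^i\Vert+\frac{\gamma}{2}\Vert R^{i-1}\Vert$ indeed decays geometrically.

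The genuine gap is the one you flag yourself: for general $n$ you state only that ``the remainder obeys a two-term linear recursion whose coefficients are explicit functions of $n,\gamma,\beta$'' and that ``the various pairwise meets get forced together and in the limit collapse onto a common projection onto $W$,'' but you never write down the splitting, the remainder, or the recursion. The difficulty is structural, not bookkeeping: for $n=1$ you already have a projection $P_{0,1}$ onto $W=\im(P_0)\cap\im(P_1)$ in hand, so $T-P_{0,1}$ is a concrete operator to iterate; for $n\ge2$ the hypotheses provide only the $\binom{n+1}{2}$ pairwise projections $P_{j,k}$, which need not commute and need not share a common range, so there is no candidate projection onto $W=\bigcap_j\im(P_j)$ to subtract off before convergence is established. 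Your claim that the thresholds $\gamma<\frac{1}{8n-3}$ and $\beta<1+\frac{1-(8n-3)\gamma}{n-1+(3n-1)\gamma}$ are ``exactly what is needed for the dominant root of that recursion to be $<1$'' is asserted rather than demonstrated --- and your own worked $n=1$ recursion converges for all $\gamma<1$, which bears no relation to the stated $\gamma<\frac{1}{5}$, so the sketch is visibly not yet tracking the mechanism behind those constants. The closing step $\Ker(I-T)\subseteq W$ is likewise left open: from $Tx=x$ you correctly get $nP_kx=\sum_{j\ne k}P_kP_jx$, and since $(I-P_k)P_{k,j}=0$ one can rewrite this as $(n+1)(I-P_k)x=\sum_{j\ne k}(I-P_k)(P_j-P_{k,j})x$, but turning that into $(I-P_k)x=0$ requires precisely the quantitative estimate in which $\gamma$-smallness beats the $\beta$-growth, which is not supplied. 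As written this is a plausible plan, not a proof.
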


\begin{remark}
The conditions for the convergence of $T^i$ stated above are not optimal and can be somewhat improved using \cite{OppAngle}[Theorem 2.2]. 
\end{remark}






\subsection{Random walks on bipartite finite graphs}
\label{Random walks on bipartite finite graphs subsec}

Here we collect some facts regrading $\lambda_{(V,E), \bipartite}^{\B}$ defined in the introduction.  To ease the reading, we will repeat the definition and also repeat several definitions and facts that were already mentioned in the introduction. 

Given a finite graph $(V,E)$, a weight function on $(V,E)$ is a function $m: E \rightarrow \mathbb{R}_+$ and $(V,E)$ with a weight function is called a weighted graph. Given a weighted graph as above, we define for every $v \in V$, $m(v) = \sum_{e \in E, v \in e} m(e)$ and for a non-empty subset $U \subseteq V$, define $m(U) = \sum_{v \in U} m(v)$.   

The graph $(V,E)$ is called \textit{bipartite} if $V$ can be partitioned into two disjoint sets $V = S_1 \cup S_2$ called \textit{sides} such that for each $\lbrace u,v \rbrace \in E$, $\vert \lbrace u,v \rbrace \cap S_1 \vert =  \vert \lbrace u,v \rbrace \cap S_2 \vert =1$, i.e., each edge has exactly one vertex in each side.  We note that it follows that $m (S_1) = m(S_2) = \frac{1}{2} m(V)$.

We also define $\ell^2 (V,m)$ as in \cref{Vector valued subsec} above, i.e., $\ell^2 (V,m)$ is the space of functions $\phi: V \rightarrow \mathbb{C}$ with an inner-product
$$\langle \phi, \psi \rangle = \sum_{\lbrace v \rbrace \in V} = m(v) \phi (v) \overline{\psi (v)}.$$

The \textit{random walk} on $(V,E)$ as above is the operator $A: \ell^2 (V,m) \rightarrow \ell^2 (V,m)$ defined as  
$$(A \phi) (v) = \sum_{u \in V, \lbrace u, v \rbrace \in E} \frac{m (\lbrace u, v \rbrace)}{m(v)} \phi (u).$$
In the case where $m$ is constant $1$ on all the edges, then for every vertex $v$, $m(v)$ is the valency of $v$ and $A$ is called the \textit{simple random walk} on $(V,E)$. 

We state without proof a few basic facts regarding the random walk operator:
\begin{enumerate}
\item With the inner-product defined above, $A$ is a self-adjoint operator and the eigenvalues of $A$ lie in the interval $[-1,1]$.
\item The space of constant functions is an eigenspace of $A$ with eigenvalue $1$ and if $(V,E)$ is connected all other the other eigenfunctions of $A$ have eigenvalues strictly less than $1$.
\item The graph $(V,E)$ is bipartite if and only if $-1$ is an eigenvalue of $A$.
\end{enumerate}

Assuming that $(V,E)$ is bipartite with sides $S_1, S_2$,  we note that the spectrum of $A$ is symmetric: Explicitly, if $\lambda$ is an eigenvalue of $A$ with an eigenfunction $\phi$, then 
$$\phi ' (u) = 
\begin{cases}
\phi (u) & u \in S_1 \\
- \phi (u) & u \in S_2
\end{cases}$$
is an eigenfunction of $A$ with an eigenvalue $- \lambda$.  In particular $\phi = \mathbbm{1}_{S_1} - \mathbbm{1}_{S_2}$ is an eigenfunction with the eigenvalue $-1$.

We define the following averaging operators:
$M_{1}, M_{2} : \ell^2 (V, m) \rightarrow \mathbb{C}$: 
$$M_{i} \phi = \frac{1}{m (S_{i})} \sum_{u \in S_{i}} \phi (u).$$
We also define $M_{\sides} : \ell^2 (V, m) \rightarrow \ell^2 (V, m)$ by 
$$M_{\sides} \phi (u) = 
\begin{cases}
M_{1} \phi & u \in S_{1} \\
M_{2} \phi & u \in S_{2} 
\end{cases}.$$

We note that $M_{\sides}$ is the orthogonal projection  on the space of functions $\Span \lbrace \mathbbm{1}_V,   \mathbbm{1}_{S_i} - \mathbbm{1}_{S_j} \rbrace$.

We recall the following definition of spectral expansion:
\begin{definition}
Let $(V,E)$ be a finite connected graph with a weight function $m$ and $0 \leq \lambda <1$ be a constant. The graph $(V,E)$ is called a one-sided $\lambda$-spectral expander if the spectrum of $A$ is contained in $[-1, \lambda] \cup \lbrace 1 \rbrace$. 
\end{definition}
As noted above,  for a bipartite graph the spectrum of $A$ is symmetric with an eigenvalue $-1$ with an eigenfunction $\mathbbm{1}_{S_1} - \mathbbm{1}_{S_2} $.  Thus, for a connected bipartite graph $(V,E)$ it holds that the graph is a one-sided $\lambda$-spectral expander if and only if $\Vert A (I- M_{\sides}) \Vert \leq \lambda$. 

Given a Banach space $\B$, we consider the operator $(A (I-M_\sides)) \otimes \id_{\B} : \ell^2 (V,m ; \B) \rightarrow \ell^2 (V,m ; \B)$ and denote $\lambda_{(V,E),\bipartite}^\B = \Vert (A (I-M_\sides)) \otimes \id_{\B} \Vert_{B(\ell^2 (V,m ; \B))}$.

\begin{claim}
For every graph $(V,E)$ and every Banach space $\B$, $\lambda_{(V,E),\bipartite}^\B \leq 2$.
\end{claim}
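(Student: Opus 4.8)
The plan is to bound the operator $A(I - M_{\sides})$ as a composition of two pieces whose norms are easy to control after tensoring with $\id_\B$. First I would observe that the random walk operator $A$ itself is \emph{fully contractive}: the matrix of $A$ has non-negative entries, each row of $A$ sums to $1$, and each column sums to $1$ when weighted by $m(v)$ (because $m$ is symmetric on edges), so $A$ is a (weighted) doubly-stochastic operator. Such operators are convex combinations of the operators induced by permutations of $V$ that preserve $m$ — or, more directly, one checks that $\Vert A \otimes \id_\B \Vert_{B(\ell^2(V,m;\B))} \le 1$ for every $\B$ by a triangle-inequality/Cauchy–Schwarz computation on $\sum_v m(v) \vert \sum_u \tfrac{m(\{u,v\})}{m(v)} \phi(u) \vert_\B^2$, using $\sum_u \tfrac{m(\{u,v\})}{m(v)} = 1$ and $\sum_v m(\{u,v\}) = m(u)$. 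Hence $\Vert A \otimes \id_\B \Vert \le 1$.

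Next I would handle $M_{\sides}$. Writing $M_{\sides} = M + (M_{\sides} - M)$, where $M\phi \equiv \tfrac{1}{m(V)} \sum_v m(v)\phi(v)$ is the full average, one sees that $M$ is fully contractive (it is the composition of a norm-one evaluation $\ell^2(V,m;\B) \to \B$ against the normalized constant and the norm-one inclusion back), so $\Vert M \otimes \id_\B \Vert \le 1$. Similarly $M_{\sides}$ is itself fully contractive: on each side $S_i$ it replaces $\phi$ by its $m$-average over $S_i$, and the same convexity/Cauchy–Schwarz estimate (now with $m(S_1) = m(S_2) = \tfrac12 m(V)$) gives $\Vert M_{\sides} \otimes \id_\B \Vert \le 1$. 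Therefore $\Vert (I - M_{\sides}) \otimes \id_\B \Vert \le \Vert \id \Vert + \Vert M_{\sides} \otimes \id_\B \Vert \le 2$.

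Combining, $\lambda_{(V,E),\bipartite}^\B = \Vert (A(I-M_{\sides})) \otimes \id_\B \Vert \le \Vert A \otimes \id_\B \Vert \cdot \Vert (I - M_{\sides}) \otimes \id_\B \Vert \le 1 \cdot 2 = 2$, which is the claim. (If a sharper constant were wanted one would instead expand $A(I - M_{\sides}) = A - AM_{\sides}$ and note $AM_{\sides} = M_{\sides}$ since side-averages are $\pm1$-eigenfunctions, giving $A(I - M_{\sides}) = A - M_{\sides}$ and the same bound $\le 2$; but for the stated inequality the crude estimate suffices.)

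The only mild obstacle is verifying full contractivity of $A$ and of $M_{\sides}$ in the \emph{vector-valued} setting rather than just on $\ell^2(V,m)$: one cannot invoke self-adjointness or spectral arguments, so the bound must come from the explicit averaging form of the operators together with Jensen/Cauchy–Schwarz applied coordinatewise in $\B$. This is routine but is where the argument actually lives; everything else is the triangle inequality and submultiplicativity of the operator norm.
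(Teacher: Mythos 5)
Your main argument is correct and is essentially the paper's proof: both reduce to showing, via the convexity/Jensen estimate on $\sum_v m(v)\,\vert\cdot\vert_\B^2$, that $A\otimes\id_\B$ and $M_{\sides}\otimes\id_\B$ are contractions, and then combine via triangle inequality and submultiplicativity to get the bound $2$ (the paper writes $A(I-M_\sides)=A-AM_\sides$ and applies the triangle inequality there, whereas you apply submultiplicativity first and then the triangle inequality on $I-M_\sides$; these are the same computation). One small correction to your closing parenthetical: $AM_{\sides}\neq M_{\sides}$ in a bipartite graph, since $A\mathbbm{1}_{S_1}=\mathbbm{1}_{S_2}$ and $A\mathbbm{1}_{S_2}=\mathbbm{1}_{S_1}$, so $AM_{\sides}$ is $M_{\sides}$ with the two side-averages swapped; this does not affect your main estimate, which does not rely on that identity.
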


\begin{proof}
By triangle inequality and linearity,
\begin{dmath*}
\Vert (A (I-M_\sides)) \otimes \id_{\B} \Vert_{B(\ell^2 (V,m ; \B))} \leq \\
\Vert A \otimes \id_{\B} \Vert_{B(\ell^2 (V,m ; \B))} + \Vert A \otimes \id_{\B}  \Vert_{B(\ell^2 (V,m ; \B))} \Vert M_\sides \otimes \id_{\B} \Vert_{B(\ell^2 (V,m ; \B))},
\end{dmath*}
and therefore in order to prove the claim, it is enough to show that 
$$\Vert A \otimes \id_{\B} \Vert_{B(\ell^2 (V,m ; \B))} \leq 1 \text{ and } \Vert M_\sides \otimes \id_{\B} \Vert_{B(\ell^2 (V,m ; \B))} \leq 1.$$
Indeed, by the convexity of the function $\vert . \vert^2$, for every $\phi \in \ell^2 (V,m ; \B)$, 
\begin{dmath*}
\Vert (A \otimes \id_{\B}) \phi \Vert^2 = \\
\sum_{v \in V} m(v) \left\vert \sum_{u \in V, \lbrace u, v \rbrace \in E} \frac{m (\lbrace u, v \rbrace)}{m(v)} \phi (u)\right\vert^2 \leq \\
\sum_{v \in V} m(v) \sum_{u \in V, \lbrace u, v \rbrace \in E} \frac{m (\lbrace u, v \rbrace)}{m(v)} \vert \phi (u) \vert^2 = \\
\sum_{u \in V} \vert \phi (u) \vert^2 \sum_{v \in V, \lbrace u, v \rbrace \in E} m (\lbrace u, v \rbrace) = \\
\sum_{u \in V} m(u) \vert \phi (u) \vert^2  = \Vert \phi \Vert^2,
\end{dmath*}
and
\begin{dmath*}
\Vert (M_\sides \otimes \id_{\B}) \phi \Vert^2 = \\
\sum_{v \in S_1} m(v) \left\vert \frac{1}{m (S_1)} \sum_{u \in S_i} m(u) \phi (u) \right\vert^2 +  \sum_{v \in S_2} m(v) \left\vert \frac{1}{m (S_2)} \sum_{u \in S_j} m(u) \phi (u) \right\vert^2\leq \\
\sum_{v \in S_1} m(v)  \frac{1}{m (S_1)} \sum_{u \in S_i} m(u) \vert \phi (u) \vert^2 +  \sum_{v \in S_2} m(v) \frac{1}{m (S_2)} \sum_{u \in S_2} m(u) \vert  \phi (u) \vert^2 = \\
\sum_{u \in S_1} m(u) \vert \phi (u) \vert^2 + \sum_{u \in S_2} m(u) \vert \phi (u) \vert^2 = \Vert \phi \Vert^2.
\end{dmath*}
\end{proof}

Combining this Claim with Lemma \ref{L contractive prop} and Corollary \ref{norm bound on theta-Hil coro} yields:
\begin{corollary}
\label{norm bound on rw on uc coro - bipartite}
Let $(V,E)$ be a connected finite graph with a weight function $m$ and  $0 < \lambda <1$ be a constant such that $(V,E)$ is a one-sided $\lambda$-spectral expander. For every monotone increasing function  $\omega : (0,1] \rightarrow (0,1]$ such that $\lim_{t \rightarrow 0^{+}} \omega (t) = 0$ and every $\B \in \mathcal{E}^{\text{u-curved}}_\omega$, we have that 
$$\lambda_{(V,E),\bipartite}^\B \leq 2 \omega (\lambda).$$

In particular, for every $0 < \theta \leq 1$ and every strictly $\theta$-Hilbertian space $\B$, we have that 
$$\lambda_{(V,E),\bipartite}^\B \leq 2 \lambda^\theta.$$
\end{corollary}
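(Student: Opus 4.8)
The plan is to apply Proposition~\ref{L contractive prop} to the operator $T = A(I - M_\sides)$ acting on $\ell^2(V,m)$, with the constants $L = 2$ and $\delta = \lambda$. First I would check the two hypotheses of that proposition. For hypothesis~(1): since $(V,E)$ is a connected bipartite graph which is a one-sided $\lambda$-spectral expander, the characterization recorded just before the statement (via the symmetry of the spectrum of $A$ and the identification of $M_\sides$ with the orthogonal projection onto $\Span\lbrace \mathbbm{1}_V, \mathbbm{1}_{S_1} - \mathbbm{1}_{S_2}\rbrace$) gives $\Vert A(I - M_\sides)\Vert_{B(\ell^2(V,m))} \leq \lambda$, and $0 < \lambda < 1$ so $\delta = \lambda$ is admissible. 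For hypothesis~(2): the proof of the preceding Claim shows that $\Vert A \otimes \id_\B\Vert_{B(\ell^2(V,m;\B))} \leq 1$ and $\Vert M_\sides \otimes \id_\B\Vert_{B(\ell^2(V,m;\B))} \leq 1$ for every Banach space $\B$, so by the triangle inequality $\Vert T \otimes \id_\B\Vert = \Vert (A - A M_\sides)\otimes \id_\B\Vert \leq 2$; thus $L = 2 \geq 1$ works.

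With these choices in place, Proposition~\ref{L contractive prop} gives, for any monotone increasing $\omega \colon (0,1] \to (0,1]$ with $\lim_{t \to 0^+}\omega(t) = 0$ and any $\B \in \mathcal{E}^{\text{u-curved}}_\omega$, that
$$\lambda_{(V,E),\bipartite}^\B = \Vert T \otimes \id_\B\Vert_{B(\ell^2(V,m;\B))} \leq L\,\omega(\delta) = 2\,\omega(\lambda),$$
which is the first assertion.

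For the ``in particular'' part, I would invoke Corollary~\ref{norm bound on theta-Hil coro}: every strictly $\theta$-Hilbertian space $\B$ (with $0 < \theta \leq 1$) belongs to $\mathcal{E}^{\text{u-curved}}_\omega$ for $\omega(t) = t^\theta$, and this $\omega$ is monotone increasing on $(0,1]$ with $\lim_{t \to 0^+} t^\theta = 0$. Applying the first part with this $\omega$ then yields $\lambda_{(V,E),\bipartite}^\B \leq 2\lambda^\theta$.

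I do not expect a genuine obstacle: the argument is just an assembly of the preceding Claim, Proposition~\ref{L contractive prop} and Corollary~\ref{norm bound on theta-Hil coro}. The one point worth flagging is that $T = A(I - M_\sides)$ is \emph{not} itself fully contractive -- only $\tfrac{1}{2}T$ is, because the triangle inequality costs a factor of $2$ -- which is precisely why the constant $L = 2$ (not $1$) is forced and why the factor $2$ survives into the final bounds. One could trim this to $2^{1-\theta}$ in the strictly $\theta$-Hilbertian case by applying Corollary~\ref{norm bound on theta-Hil coro} directly to the fully contractive operator $\tfrac{1}{2}T$, but this sharpening is not needed.
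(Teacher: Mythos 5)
Your proposal is correct and follows exactly the route the paper indicates: the paper's (unstated) proof is precisely the combination of the preceding Claim (giving $L=2$), Proposition~\ref{L contractive prop} (with $\delta=\lambda$), and Corollary~\ref{norm bound on theta-Hil coro} for the ``in particular'' part. Your verification of both hypotheses is sound, and the side remark about trimming the constant to $2^{1-\theta}$ by applying Corollary~\ref{norm bound on theta-Hil coro} directly to $\tfrac{1}{2}T$ is a valid (if unneeded) sharpening.
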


The expansion constant $\lambda_{(V,E),\bipartite}^\B$ can also be described as a cosine between projections as in Definition \ref{Angle between proj def}. In order to do this, we define $\ell^2 (E ; \B)$ to be the space of functions $\Phi : E \rightarrow \B$ with the norm
$$\Vert \Phi \Vert^2 = \sum_{\lbrace u, v \rbrace \in E} \vert \Phi (\lbrace u, v \rbrace ) \vert^2.$$
For a bipartite graph $(V,E)$ with sides $S_1, S_2$ we define the following projections on $\ell^2 (E ; \B)$: For $i=1,2$, and $\lbrace v_1, v_2 \rbrace \in E$ with $v_i \in S_i$ 
$$P_i \Phi (\lbrace v_1,v_2 \rbrace) = \frac{1}{m (v_i)} \sum_{\lbrace v_i, u \rbrace \in E} \Phi (  \lbrace v_i, u \rbrace ), \forall \Phi \in \ell^2 (E ; \B).$$
We also define 
$$P_{1,2} \Phi \equiv \frac{1}{\vert E \vert} \sum_{\lbrace u, v \rbrace \in E} \Phi (\lbrace u, v \rbrace), \forall \Phi \in \ell^2 (E ; \B).$$
Since $P_{1,2}$ is a projection on the space of constant functions, it follows that $P_i P_{1,2} = P_{1,2}$. One can also verify that $P_{1,2} P_i = P_{1,2}$ (we leave this to the reader).  Thus, we can define $\cos (\angle (P_1, P_2))$ as in Definition \ref{Angle between proj def}. 
\begin{proposition}
\label{spec gap equal cos prop}
For a finite connected bipartite graph $(V,E)$ and a Banach space $\B$ it holds that 
$$\cos (\angle (P_1, P_2)) = \lambda_{(V,E),\bipartite}^\B,$$
where $P_1,P_2$ are the projections defined above.
\end{proposition}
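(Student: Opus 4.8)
The plan is to establish the identity by relating the two $\ell^2(E;\B)$-valued operators $P_1 P_2 - P_{1,2}$ and $(A(I-M_{\sides}))\otimes\id_\B$ via an explicit isometry (or pair of norm-preserving maps) between $\ell^2(E;\B)$ and $\ell^2(V,m;\B)$, or more precisely between suitable summands of these spaces. First I would unwind the definitions: given $\Phi\in\ell^2(E;\B)$, the operator $P_2$ (say $v_2\in S_2$) replaces the value on $\{v_1,v_2\}$ by the $m$-average over edges at $v_2$ of $\Phi$, so $P_2\Phi$ is constant along the star of each vertex in $S_2$; hence $P_2\Phi$ is pulled back from a function $\psi\in\ell^2(S_2,m)$ via $\{v_1,v_2\}\mapsto \psi(v_2)$. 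Composing with $P_1$ then averages over the star of $v_1\in S_1$, which — after the change of variables $\sum_{\{v_1,v_2\}\in E}\frac{m(\{v_1,v_2\})}{m(v_1)}(\cdots)$ — is exactly the bipartite random walk step. So on the relevant subspace $P_1P_2$ corresponds to the "two-step walk $S_1\to S_2\to S_1$", and subtracting $P_{1,2}$ removes the constant (trivial) part.

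The key computational step is to make precise the correspondence $\ell^2(E;\B)\leftrightarrow \ell^2(S_i,m;\B)$ and check it is isometric. Concretely, for $i=2$, the map $\psi\mapsto\Phi_\psi$ with $\Phi_\psi(\{v_1,v_2\})=\psi(v_2)$ is \emph{not} an isometry from $\ell^2(S_2,m;\B)$, but $\Phi\mapsto(v_2\mapsto \frac{1}{m(v_2)}\sum_{\{v_2,u\}\in E}\Phi(\{v_2,u\}))$ followed by the right normalization is; the clean statement is that $\im(P_2)$, with the $\ell^2(E)$-norm, is isometric to $\ell^2(S_2,m;\B)$ via $\Phi\mapsto\psi$ where $\psi(v_2)$ is the common value of $\Phi$ on the star, and $\Vert\Phi\Vert_{\ell^2(E;\B)}^2=\sum_{v_2\in S_2}m(v_2)\vert\psi(v_2)\vert^2$ since $\sum_{\{v_2,u\}\in E}m(\{v_2,u\})=m(v_2)$ — here one uses the unweighted norm on $\ell^2(E;\B)$ together with the fact that $m\equiv 1$ on edges in the relevant (simple random walk) setting, or more generally tracks the weight $m(\{u,v\})$ through the sum. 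Under this identification $P_1 P_2|_{\im(P_2)}$ becomes the $S_2\to S_1\to S_2$ random walk operator on $\ell^2(S_2,m;\B)$, which is a ``half'' of $A^2$, and $P_{1,2}$ becomes $M_2$; one then computes $\Vert P_1P_2 - P_{1,2}\Vert = \Vert (A(I-M_{\sides}))^2 \text{ restricted}\Vert$ and uses that for the bipartite walk $\Vert (A(I-M_{\sides}))\otimes\id_\B\Vert$ equals the norm of its ``square root'' in the appropriate sense — actually the cleanest route avoids the square: identify $\ell^2(E;\B)$ with $\ell^2(S_1,m;\B)\oplus\text{(orthogonal complement within }\im(P_1))$-type decompositions and show $P_1P_2$ acts on the ``walk-invariant'' part exactly as $A(I-M_{\sides})$ does on $\ell^2(V,m;\B)$ after the self-adjointness-driven folding $\phi\mapsto\phi|_{S_1}$. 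I would compute $\Vert P_1 P_2 - P_{1,2}\Vert$ and $\Vert P_2 P_1 - P_{1,2}\Vert$ symmetrically and observe both equal $\lambda_{(V,E),\bipartite}^\B$, so the max in Definition~\ref{Angle between proj def} is attained by either.

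The main obstacle I anticipate is bookkeeping the \emph{weights}: $\ell^2(E;\B)$ carries the unweighted norm $\sum_{\{u,v\}\in E}\vert\Phi(\{u,v\})\vert^2$ while $\ell^2(V,m;\B)$ carries $\sum_v m(v)\vert\phi(v)\vert^2$, and $A$ is built from the edge weights $m(\{u,v\})$; getting the normalizations in $P_i$ (the $\frac{1}{m(v_i)}$ factors) to line up with the random-walk transition probabilities $\frac{m(\{u,v\})}{m(v)}$ so that the two operator norms literally coincide — rather than differing by a harmless but annoying constant — is the delicate point. A secondary subtlety is verifying $P_{1,2}P_i = P_{1,2}$ and that $P_{1,2}$ is indeed \emph{the} projection onto $\im(P_1)\cap\im(P_2)$ (functions constant on all of $E$), which the excerpt defers to the reader but which is needed for Definition~\ref{Angle between proj def} to apply; this is a short check that a function in $\im(P_1)\cap\im(P_2)$ is constant on each star from both sides, hence constant on $E$ by connectedness of $(V,E)$. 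Once the isometric identification is pinned down, the equality $\cos(\angle(P_1,P_2)) = \lambda_{(V,E),\bipartite}^\B$ falls out by matching operators term by term.
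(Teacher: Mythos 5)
Your sketch identifies the paper's core device — an isometry from $\im(P_i)$ onto functions on $S_i$, under which $P_j P_i - P_{1,2}$ becomes a piece of $A(I-M_{\sides})$ — and your worry about the $m(\{u,v\})$-weights is real: the map $L_i\phi(\{v_1,v_2\}) = \phi(v_i)$ from functions on $S_i$ (with the $m$-weighted $\ell^2$-norm) onto $\im(P_i)\subseteq\ell^2(E;\B)$ is an isometry precisely because $m\equiv 1$ on the edges of the $1$-dimensional links, and the paper leans on this silently.

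Two steps in the middle of your sketch, however, would derail a careful write-up. The ``half of $A^2$ / square-root'' detour is off: $P_1 P_2|_{\im(P_2)}$ equals $P_1|_{\im(P_2)}$ and maps $\im(P_2)\to\im(P_1)$; under the identifications this is the \emph{one-step} $S_2\to S_1$ transition, not a two-step $S_2\to S_1\to S_2$ walk, so there is no squared operator and no square root to worry about. And ``orthogonal complement within $\im(P_1)$'' is not available for a general Banach $\B$. The clean route the paper takes is to split $\ell^2(V,m;\B) = \ell^2_1\oplus\ell^2_2$ (functions supported on $S_1$, respectively $S_2$), observe that $A$ swaps the two summands while $M_{\sides}$ preserves them, so the squared norm of $(A(I-M_{\sides}))\otimes\id_\B\,\phi$ decomposes block-by-block and hence $\Vert(A(I-M_{\sides}))\otimes\id_\B\Vert = \max_{i=1,2}\Vert\text{restriction to }\ell^2_i\Vert$; the isometries $L_i$ then turn these two restriction norms into $\Vert P_2 P_1 - P_{1,2}\Vert$ and $\Vert P_1 P_2 - P_{1,2}\Vert$ respectively. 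Your final remark that $\Vert P_1P_2 - P_{1,2}\Vert$ and $\Vert P_2P_1 - P_{1,2}\Vert$ \emph{each} equal $\lambda_{(V,E),\bipartite}^\B$ is not justified for non-Hilbert $\B$ (there is no adjoint symmetry to appeal to); the equality holds because the cosine is \emph{defined} as the maximum of the two, which matches the maximum of the two restriction norms above.
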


\begin{proof}
For $i=1,2$, define $\ell^2_i (V,m ; \B)$ to be the subspace of $\ell^2 (V,m ;B)$ composed of functions supported on $S_i$. Note every $\phi \in \ell^2 (V,m ;B)$ can be decomposed to $\phi = \phi_1 + \phi_2$ where $\phi_i \in \ell^2_i (V,m ; \B)$ by defining
$$\phi_i (v)= \begin{cases}
\phi (v) & v \in S_i \\
0 & v \notin S_i
\end{cases}.$$
Also note that for this decomposition 
$\Vert \phi \Vert^2 = \Vert \phi_1 \Vert^2 + \Vert \phi_2 \Vert^2$ and that 
$$\Vert ((A (I- M_{\sides})) \otimes \id_{\B}) \phi \Vert^2 = \Vert ((A (I- M_{\sides}))\otimes \id_{\B}) \phi_1 \Vert^2 + \Vert ((A (I- M_{\sides}))\otimes \id_{\B}) \phi_2 \Vert^2.$$
Thus for every non-zero $\phi$,
$$\frac{\Vert ((A (I- M_{\sides}))\otimes \id_{\B}) \phi \Vert^2}{\Vert \phi \Vert^2} = \frac{\Vert ((A (I- M_{\sides}))\otimes \id_{\B}) \phi_1 \Vert^2 + \Vert ((A (I- M_{\sides})) \otimes \id_{\B}) \phi_2 \Vert^2}{\Vert \phi_1 \Vert^2 + \Vert \phi_2 \Vert^2}.$$
It follows that 
$$\Vert (A (I- M_{\sides})) \otimes \id_{\B} \Vert = \max_{i=1,2} \Vert \left.  (A (I- M_{\sides})) \otimes \id_{\B} \right\vert_{\ell^2_i (V, m ; \B)} \Vert.$$

Define $L_i : \ell^2_i (V, m ; \B) \rightarrow \ell^2 (E ; \B)$ by 
$$L_i \phi (\lbrace v_1, v_2 \rbrace) = \phi (v_i),$$
for all $\lbrace v_1, v_2 \rbrace \in E$ such that $v_1 \in S_1, v_2 \in S_2$.  Note that $L_i$ is an isometry onto $\im (P_i) \subseteq \ell^2 (E ; \B)$.  Observe that by the definition of all the operators it follows that
\begin{dmath*}
\left. (A (I- M_{\sides}))\otimes \id_{\B} \right\vert_{\ell^2_1 (V, m ; \B)} =L_2^{-1} P_2 (I - P_{1,2} ) L_1 = L_2^{-1} P_2 (I - P_{1,2} ) P_1 L_1  = L_2^{-1} (P_2 P_1  - P_{1,2} ) P_1 L_1 ,
\end{dmath*}
and
$$\left.  (A (I- M_{\sides})) \otimes \id_{\B} \right\vert_{\ell^2_2 (V, m ; \B)} =L_1^{-1} (P_1 P_2 - P_{1,2} ) L_2.$$
Thus, using the fact that $L_i$ are isometries onto $\im (P_i)$ it follows that 
$$\Vert \left. (A (I- M_{\sides})) \otimes \id_{\B} \right\vert_{\ell^2_1 (V, m ; \B)} \Vert = \Vert P_2 P_1  - P_{1,2} \Vert,$$
and
$$\Vert \left. (A (I- M_{\sides})) \otimes \id_{\B} \right\vert_{\ell^2_2 (V, m ; \B)} \Vert = \Vert P_1 P_2  - P_{1,2} \Vert,$$
as needed.

\end{proof}

\subsection{Weighted simplicial complexes}

Given a set $V$, an abstract simplicial complex $X$ with a vertex set $V$ is a family of subsets $X \subseteq 2^V$ such that if $\tau \in X$ and $\eta \subseteq \tau$, then $\eta \in X$. We will denote $X(k)$ to be the sets in $X$ of cardinality $k+1$.  A simplicial complex $X$ is called \textit{$n$-dimensional} if $X(n+1) = \emptyset$. An $n$-dimensional simplicial complex $X$ is called \textit{pure} $n$-dimensional if for every $\tau$ in $X$, there is $\sigma \in X(n)$ such that $\tau \subseteq \sigma$.  

A pure $n$-dimensional simplicial complex $X$ is called \textit{gallery connected} if for every $\sigma, \sigma ' \in X(n)$, there is a finite sequence $\sigma_1,...,\sigma_l \in X(n)$ such that $\sigma = \sigma_1, \sigma ' = \sigma_l$ and for every $1 \leq i \leq l-1$, $\sigma_i \cap \sigma_{i+1} \in X(n-1)$. Below, we will always assume that $X$ is pure $n$-dimensional and gallery connected.


We define the following weight function $m: \bigcup_{k=0}^{n} \X (k) \rightarrow \mathbb{R} \cup \lbrace \infty \rbrace$ inductively as follows: 
$$\forall \sigma \in \X (n), m(\sigma) =1,$$
For $0 \leq k \leq n-1$ and $\tau \in X(k)$,
$$m (\tau) = \sum_{\sigma \in \X (k+1), \tau \subseteq \sigma} m(\sigma).$$
More explicitly,
$$\forall \tau \in \X (k), m(\tau) = (n-k)! \vert \lbrace \sigma \in \X (n) : \tau \subseteq \sigma  \rbrace \vert.$$

Given a simplex $\tau \in X$, the \textit{link of $\tau$} is the subcomplex of $X$, denoted $X_\tau$, that is defined as
$$X_\tau = \lbrace \eta \in X : \tau \cap \eta = \emptyset, \tau \cup \eta \in X \rbrace.$$
We define a weight function $m_\tau$ on $X_\tau$ by 
$$m_\tau (\eta) = m (\tau \cup \eta).$$
Below we will only be interested in the $1$-dimensional links of $X$: We note that if $\tau \in X(k)$ and $X$ is pure $n$-dimensional, then $X_\tau$ is pure $(n-k-1)$-dimensional. In particular, if $\tau \in X(n-2)$, then $X_\tau$ is a graph. Thus, we will refer to all links of the form $X_\tau$ where $\tau \in X(n-2)$ as \textit{the $1$-dimensional links of $X$}.  For $1$-dimensional links, the weight function is easy to describe: For a $1$-dimensional link $X_\tau$, we note that this weight function assigns the weight $1$ for each edge in the link $X_\tau$ and for each every $\lbrace u  \rbrace \in X_\tau (0)$, $m_\tau (\lbrace u \rbrace)$ is the valency of $\lbrace u \rbrace$ in $X_\tau$.

Given a pure $n$-dimensional simplicial complex $X$, we call $X$ \textit{partite}  if there is a partition of the vertex set $V$,  $V_0 \sqcup ...  \sqcup V_n = V$ such that for every $\sigma \in X(n)$ and every $0 \leq i \leq n$ it holds that $\vert \sigma \cap V_i \vert =1$, i.e., every $n$-dimensional simplex has exactly one vertex in each of the sets $V_0,...,V_n$.  A partite simplicial complex is also sometimes called \textit{colorable}, since we can think of the partition $V_0,...,V_n$ as a coloring of the vertex sets with $n+1$ colors, such that each $n$-dimensional simplex has vertices with all the colors (or equivalently, each $n$-dimensional simplex do not have two vertices with the same color).  For an $n$-dimensional partite simplicial complex, we define a type function $\type : X \rightarrow 2^{\lbrace 0,..., n\rbrace}$ by 
$$\type (\tau) = \lbrace i : \exists v \in \tau \cap V_i \rbrace.$$

\section{\.Zuk's criterion for partite links}

\label{Zuk criterion sec}

Let $X$ be a partite pure $n$-dimensional simplicial complex such that $X$ is gallery connected and the $1$-dimensional links of $X$ are connected finite graphs.  Also let $G$ be a locally compact, unimodular group acting on $X$ such that the action is cocompact and for every $\tau \in X(n-2) \cup X(n-1) \cup X(n)$ the subgroup stabilizing $\tau$, denoted $G_\tau$, is an open compact subgroup.  We also assume that the action of $G$ is type-preserving, i.e., for every $0 \leq k \leq n$ and every $\tau \in X(k)$, it holds for every $g \in G$ that $\type (\tau) = \type (g.\tau)$.  Last, let $\pi$ be a continuous representation $\pi$ of $G$ on a Banach space $\B$. 

Note that by the assumption that the action of $G$ is type preserving it follows for each $\sigma \in X(k)$ that if $g. \sigma = \sigma$, then for each vertex $v \in \sigma$ it holds that $g. v = v$.   For every $n-2 \leq k \leq n$,  we choose a fundamental domain $D(k)$ of the action of $G$ on $X(k)$ such that the following holds: for every $0 \leq k_1 < k_2 \leq n$ and every $\tau \in D(k_1)$ there is $\sigma \in D (k_2)$ such that $\tau \subseteq \sigma$. 

Define $C (X(n), \pi)$ to be the space of maps $\phi : X(n) \rightarrow \B$ that are equivariant with respect to $\pi$, i.e., for every $\sigma \in X(n)$ and every $g \in G$, $\pi (g) \phi (\sigma) = \phi (g. \sigma)$.  Define a norm on $C (X(n), \pi)$ by
$$\Vert \phi \Vert^2 = \sum_{\sigma \in D(n)} \frac{1}{\mu (G_\sigma )} \vert \phi (\sigma) \vert^2,$$
where $G_\sigma$ is the subgroup stabilizing $\sigma$ and $\mu$ is the Haar measure of $G$.  

Define $K \subseteq G$ to be the set
$$K = \lbrace g \in G : \exists \sigma, \sigma ' \in D(n), \vert g. \sigma \cap \sigma ' \vert \geq n-1 \rbrace.$$
Note that $K$ is a compact symmetric set that generates $G$ (the last fact is due to the assumption that $X$ is gallery connected). 

The following Lemma is a variant of \cite[Lemma 3.3]{DJ1}:
\begin{lemma}
\label{change of summation lemma}
Let $n-2 \leq k < l \leq n$ and 
$$\psi :  \lbrace (\tau, \eta ) \in X(k) \times X(l) : \tau \subseteq \eta \rbrace \rightarrow [0, \infty) .$$
Assume that there is a constant $\kappa \geq 1$ such that for every $g \in K$ and every $ (\tau, \eta) \in  X(k) \times X(l), \tau \subseteq \eta$ it holds that $\psi (g.  \tau, g. \eta) \leq \kappa \psi (\tau, \eta)$. 
 Then 
$$\sum_{\eta \in D(l)} \frac{1}{\mu (G_\eta)} \sum_{\tau \in X(k), \tau \subseteq \eta} \psi (\tau, \eta) \leq \kappa^2 \sum_{\tau \in D(k)} \frac{1}{\mu (G_\tau)} \sum_{\eta \in X(k), \tau \subseteq \eta} \psi (\tau, \eta)$$
and
$$\sum_{\eta \in D(l)} \frac{1}{\mu (G_\eta)} \sum_{\tau \in X(k), \tau \subseteq \eta} \psi (\tau, \eta) \geq \frac{1}{\kappa^2} \sum_{\tau \in D(k)} \frac{1}{\mu (G_\tau)} \sum_{\eta \in X(k), \tau \subseteq \eta} \psi (\tau, \eta).$$
\end{lemma}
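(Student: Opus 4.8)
The plan is to realise both sides of the claimed inequalities as two different ways of organising a single sum over the $G$--set of incident pairs
$$Z=\lbrace (\tau,\eta)\in X(k)\times X(l):\tau\subseteq\eta\rbrace ,$$
and to absorb the failure of $\psi$ to be $G$--invariant into exactly two applications of the hypothesis $\psi(g.\tau,g.\eta)\le\kappa\psi(\tau,\eta)$ --- one for each change of summation order. First I would record the bookkeeping: $G$ acts on $Z$ with compact open stabilizers $G_{(\tau,\eta)}=G_\tau\cap G_\eta$; the fibre $\lbrace\tau\in X(k):\tau\subseteq\eta\rbrace$ over $\eta\in X(l)$ is finite (it has at most finitely many elements, being faces of a single simplex), and the fibre $\lbrace\eta\in X(l):\tau\subseteq\eta\rbrace$ over $\tau\in X(k)$ is finite because the $1$--dimensional links of $X$ are finite graphs. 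Choosing for each $\tau\in D(k)$ a set $S_\tau$ of representatives for the $G_\tau$--action on its fibre, the set $D_Z:=\lbrace(\tau,\eta):\tau\in D(k),\ \eta\in S_\tau\rbrace$ is a fundamental domain for $G$ on $Z$; using $[G_x:G_y]=\mu(G_x)/\mu(G_y)$ for compact open $G_y\le G_x$, the two orbit--stabilizer computations give, for every $G$--invariant $F\ge0$,
$$\sum_{\eta\in D(l)}\frac{1}{\mu(G_\eta)}\sum_{\tau\subseteq\eta}F(\tau,\eta)=\sum_{(\tau,\eta)\in D_Z}\frac{F(\tau,\eta)}{\mu(G_{(\tau,\eta)})}=\sum_{\tau\in D(k)}\frac{1}{\mu(G_\tau)}\sum_{\eta\supseteq\tau}F(\tau,\eta),$$
which is the invariant case (essentially \cite[Lemma 3.3]{DJ1}).

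The crucial point is an observation on which group elements occur when one passes between these two organisations: \emph{if $g\in G$ carries a $k$--simplex $\tau$ that is a face of some simplex of $D(n)$ onto a simplex of $D(k)$, then $g\in K$}. Indeed, pick $\sigma\in D(n)$ with $\tau\subseteq\sigma$ and $\sigma'\in D(n)$ with $g.\tau\subseteq\sigma'$ (both exist by the compatibility of the chosen fundamental domains $D(\cdot)$, taking $\sigma=\eta$ or $\sigma'=\cdot$ when the relevant dimension is $n$); then $g.\sigma$ and $\sigma'$ both contain $g.\tau$, and $\vert g.\tau\vert=k+1\ge n-1$ since $k\ge n-2$, so $\vert g.\sigma\cap\sigma'\vert\ge n-1$ and hence $g\in K$. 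In particular $G_\tau\subseteq K$ for every $\tau\in D(k)$, and by the same argument $G_\eta\subseteq K$ for every $\eta\in D(l)$ (here one uses $l\ge n-1$); recall also that $K$ is symmetric.

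Now I would assemble the estimate. For $\tau\in D(k)$, grouping $\lbrace\eta\supseteq\tau\rbrace$ into $G_\tau$--orbits, writing $\eta=g.\eta'$ with $\eta'\in S_\tau$ and $g\in G_\tau\subseteq K$, and using $\tfrac1\kappa\psi(\tau,\eta')\le\psi(\tau,\eta)=\psi(g.\tau,g.\eta')\le\kappa\psi(\tau,\eta')$ (the lower bound via $g^{-1}\in K$) together with $\vert G_\tau.\eta'\vert=\mu(G_\tau)/\mu(G_{(\tau,\eta')})$, gives
$$\frac1\kappa\sum_{(\tau,\eta)\in D_Z}\frac{\psi(\tau,\eta)}{\mu(G_{(\tau,\eta)})}\ \le\ \sum_{\tau\in D(k)}\frac{1}{\mu(G_\tau)}\sum_{\eta\supseteq\tau}\psi(\tau,\eta)\ \le\ \kappa\sum_{(\tau,\eta)\in D_Z}\frac{\psi(\tau,\eta)}{\mu(G_{(\tau,\eta)})}.$$
Symmetrically, each pair $(\tau,\eta)$ with $\eta\in D(l)$, $\tau\subseteq\eta$, lies in the orbit of a unique $(\tau',\eta')\in D_Z$, say $(\tau,\eta)=g.(\tau',\eta')$; since $g$ carries $\tau'\in D(k)$ onto $\tau$, a face of a $D(n)$--simplex containing $\eta$, the observation forces $g^{-1}\in K$, so $\tfrac1\kappa\psi(\tau',\eta')\le\psi(\tau,\eta)\le\kappa\psi(\tau',\eta')$; counting, for fixed $(\tau',\eta')\in D_Z$, the $\mu(G_{\eta'})/\mu(G_{(\tau',\eta')})$ pairs in its orbit whose $\eta$--component lies in $D(l)$ (all carrying equal weight $1/\mu(G_\eta)=1/\mu(G_{\eta'})$) yields
$$\frac1\kappa\sum_{(\tau,\eta)\in D_Z}\frac{\psi(\tau,\eta)}{\mu(G_{(\tau,\eta)})}\ \le\ \sum_{\eta\in D(l)}\frac{1}{\mu(G_\eta)}\sum_{\tau\subseteq\eta}\psi(\tau,\eta)\ \le\ \kappa\sum_{(\tau,\eta)\in D_Z}\frac{\psi(\tau,\eta)}{\mu(G_{(\tau,\eta)})}.$$
Combining the two displays gives both asserted inequalities with constant $\kappa^2$. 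I expect the main obstacle to be precisely the ``crucial observation'': checking that every group element needed to reorganise the sum must lie in $K$ --- this is where the hypotheses $k\ge n-2$, the compatibility of the $D(\cdot)$'s, and the definition of $K$ all get used. Once that is in place the remainder is routine orbit--stabilizer bookkeeping, the only mild care being finiteness of the fibres (so the inner sums converge) and keeping track of the Haar--measure ratios $[G_x:G_y]=\mu(G_x)/\mu(G_y)$.
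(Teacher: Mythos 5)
Your proof is correct, and both you and the paper hinge on the same pivotal fact: every group element that arises in reorganizing the sums must lie in $K$ (because $k+1\geq n-1$ and the fundamental domains $D(\cdot)$ are nested inside $D(n)$), so each reorganization costs at most one factor of $\kappa$ and the constant $\kappa^2$ suffices. The packaging, however, is genuinely different. The paper proves only the first inequality (declaring the second symmetric) and runs a single chain from the $D(l)$-side to the $D(k)$-side: it parameterizes $\lbrace\tau\subseteq\eta\rbrace$ by coset representatives $A_{(\tau_0,\eta)}\subseteq K$ of $G_{\tau_0}$, applies the $K$-hypothesis once to move the action onto $\eta$, then averages over $G_{\tau_0}$ (spending the second $\kappa$) and performs a Fubini-type conversion of ``sum over $A_{(\tau_0,\eta)}$ plus integral over $G_{\tau_0}$'' into a single Haar integral over $\lbrace g'':\tau_0\subseteq g''.\eta\rbrace$, which then collapses to the sum over $\eta\in X(l)$. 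You instead introduce a symmetric middle quantity $\sum_{D_Z}\psi/\mu(G_{(\tau,\eta)})$ indexed by a fundamental domain for $G$ on incident pairs, and show by two separate discrete orbit--stabilizer counts that each of the two outer expressions is within a factor $\kappa$ of it. Both routes produce the same constants and rest on the same observation about $K$, but your factoring through $D_Z$ treats the two inequalities on equal footing and replaces the $G_\tau$-integral with explicit index computations $[G_x:G_y]=\mu(G_x)/\mu(G_y)$, which is arguably more transparent, while the paper's version is somewhat more compact. One small remark: you establish $G_\eta\subseteq K$ for $\eta\in D(l)$ but never actually invoke it; only $G_\tau\subseteq K$ for $\tau\in D(k)$ and the observation applied to the element moving $(\tau',\eta')\in D_Z$ into the $D(l)$-slice are used.
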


\begin{proof}
Note that since $K$ is symmetric, it follows that $\psi (g.  \tau, g. \eta) \geq \frac{1}{\kappa} \psi (\tau, \eta)$ for every $g \in K$ and every $ (\tau, \eta) \in  X(k) \times X(l), \tau \subseteq \eta$. Thus the proofs of both inequalities are similar (only using reverse inequalities) and we will only prove the first one. 

For every $\tau \in D(k), \eta \in D(l)$,  we fix a maximal set $A_{(\tau, \eta)} \subseteq G$ such that $\forall g \in A_{(\tau, \eta)}$, $g.\tau \subseteq \eta$ and for $g, g' \in A_{(\tau, \eta)}$, if $g \neq g'$, then $g.\tau \neq g' .\tau$ (it may be that set $A_{(\tau, \eta)}$ is the empty set).     Note that by the choice of $D(k), D(l)$, it follows that $A_{(\tau, \eta)} \subseteq K$.  Thus 


\begin{dmath*}
\sum_{\eta \in D(l)} \frac{1}{\mu (G_\eta)} \sum_{\tau \in X(k), \tau \subseteq \eta} \psi (\tau, \eta)    =
\sum_{\eta \in D(l)} \frac{1}{\mu (G_\eta)} \sum_{\tau \in D(k)} \sum_{g \in A_{(\tau, \eta)}} \psi (g.\tau, \eta)  \leq
\kappa \sum_{\eta \in D(l), \tau \in D(k)} \frac{1}{\mu (G_\eta)} \sum_{g \in A_{(\tau, \eta)}} \psi (\tau, g^{-1}. \eta) \leq
\kappa^2 \sum_{\eta \in D(l), \tau \in D(k)} \frac{1}{\mu (G_\eta)} \sum_{g \in A_{(\tau, \eta)}} \frac{1}{\mu (G_\tau)} \int_{G_\tau} \psi (\tau, g' g^{-1}. \eta) d \mu (g') =  
\kappa^2 \sum_{\tau \in D(k)} \frac{1}{\mu (G_\tau)}  \sum_{\eta \in D(l)} \frac{1}{\mu (G_\eta)} \int_{g'', \tau \subseteq 
g''.\eta} \psi (\tau, g''. \eta) d \mu (g'') =  
\kappa^2 \sum_{\tau \in D(k)} \frac{1}{\mu (G_\tau)}  \sum_{\eta \in X(l), \tau \subseteq \eta} \psi (\tau, \eta).
\end{dmath*}
\end{proof}

\begin{corollary}
\label{change of summation coro}
Let $\nu \subseteq \lbrace 0,...,n\rbrace$ such that $\vert \nu \vert = n-1$ or $\vert \nu \vert = n$. For every $\phi \in C(X(n), \pi)$ it holds that
$$\Vert \phi \Vert^2 \leq \left( \sup_{g \in K} \Vert \pi (g) \Vert^2 \right) \sum_{\tau \in D (\vert \nu \vert -1), \type (\tau) = \nu} \frac{1}{\mu (G_\tau)}  \sum_{\sigma \in X(n), \tau \subseteq \eta} \vert \phi (\sigma) \vert^2,$$
and
$$\Vert \phi \Vert^2 \geq \left( \sup_{g \in K} \Vert \pi (g) \Vert^2 \right)^{-1} \sum_{\tau \in D (\vert \nu \vert -1), \type (\tau) = \nu} \frac{1}{\mu (G_\tau)}  \sum_{\sigma \in X(n), \tau \subseteq \eta} \vert \phi (\sigma) \vert^2,$$
\end{corollary}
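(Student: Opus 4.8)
The plan is to deduce the corollary from Lemma~\ref{change of summation lemma}. I would apply that lemma with $l=n$ and $k=\vert\nu\vert-1$; since $\vert\nu\vert\in\{n-1,n\}$ we have $k\in\{n-2,n-1\}$, which is exactly the admissible range of the lemma (and the only range for which $D(k)$ was defined). As the auxiliary function I would take
$$\psi(\tau,\eta)=\begin{cases}\vert\phi(\eta)\vert^2,&\type(\tau)=\nu,\\ 0,&\text{otherwise,}\end{cases}$$
for $(\tau,\eta)\in X(k)\times X(n)$ with $\tau\subseteq\eta$. First I would check the quasi-invariance hypothesis of Lemma~\ref{change of summation lemma}: since the action of $G$ is type-preserving, the condition $\type(\tau)=\nu$ is $G$-invariant, and since $\phi\in C(X(n),\pi)$ is equivariant we have $\phi(g.\eta)=\pi(g)\phi(\eta)$, hence $\vert\phi(g.\eta)\vert^2\le\Vert\pi(g)\Vert^2\vert\phi(\eta)\vert^2$. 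Combining these, $\psi(g.\tau,g.\eta)\le\Vert\pi(g)\Vert^2\,\psi(\tau,\eta)$, so the hypothesis holds with $\kappa:=\sup_{g\in K}\Vert\pi(g)\Vert^2$, and $\kappa\ge 1$ since $\id\in K$.

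The second step is to identify the two sides of the conclusion of Lemma~\ref{change of summation lemma} with the two sides of the corollary. This is where the partite hypothesis enters: because $X$ is colorable and the action is type-preserving, every $\eta\in X(n)$ contains \emph{exactly one} face of type $\nu$, namely the one spanned by the $\vert\nu\vert$ vertices of $\eta$ whose colors lie in $\nu$, and that face belongs to $X(k)$. Consequently $\sum_{\tau\in X(k),\,\tau\subseteq\eta}\psi(\tau,\eta)=\vert\phi(\eta)\vert^2$ for every $\eta\in X(n)$, so the left-hand side of the lemma's conclusion is
$$\sum_{\eta\in D(n)}\frac{1}{\mu(G_\eta)}\sum_{\tau\in X(k),\,\tau\subseteq\eta}\psi(\tau,\eta)=\sum_{\eta\in D(n)}\frac{1}{\mu(G_\eta)}\vert\phi(\eta)\vert^2=\Vert\phi\Vert^2,$$
while the right-hand side is
$$\sum_{\tau\in D(k)}\frac{1}{\mu(G_\tau)}\sum_{\eta\in X(n),\,\tau\subseteq\eta}\psi(\tau,\eta)=\sum_{\substack{\tau\in D(k)\\ \type(\tau)=\nu}}\frac{1}{\mu(G_\tau)}\sum_{\sigma\in X(n),\,\tau\subseteq\sigma}\vert\phi(\sigma)\vert^2,$$
which is precisely the sum appearing in the statement. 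Feeding these into the two inequalities of Lemma~\ref{change of summation lemma} then yields the two displayed inequalities of the corollary.

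I do not expect a genuine obstacle: the corollary is essentially a formal consequence of the change-of-summation lemma once the combinatorics of colorings is taken into account. The one point that needs a little care is the bookkeeping showing that the type restriction ``$\type(\tau)=\nu$'' is transported correctly through the change of summation in Lemma~\ref{change of summation lemma} — but this is immediate from type-preservation of the action, which is exactly why the indicator in the definition of $\psi$ is $G$-invariant. The only substantive input is the observation that in a colorable $n$-complex each top simplex has a unique face of each prescribed type, which collapses the inner sum on one side to a single term and makes the identification with $\Vert\phi\Vert^2$ transparent.
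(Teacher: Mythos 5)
Your proposal matches the paper's own argument closely: the paper uses the same auxiliary function $\psi(\tau,\sigma)=|\phi(\sigma)|^2\cdot\mathbbm{1}_{\{\type(\tau)=\nu\}}$, applies Lemma~\ref{change of summation lemma} with $l=n$ and $k=|\nu|-1$, and identifies the left-hand side of the lemma with $\Vert\phi\Vert^2$ using the observation that each $n$-simplex has a unique face of type $\nu$ and hence $\sum_{\tau\subseteq\sigma}\psi(\tau,\sigma)=|\phi(\sigma)|^2$.

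There is, however, a bookkeeping point worth flagging. You correctly compute that the quasi-invariance estimate for this $\psi$ is $\psi(g.\tau,g.\sigma)\le\Vert\pi(g)\Vert^2\psi(\tau,\sigma)$, which forces $\kappa=\sup_{g\in K}\Vert\pi(g)\Vert^2$. But the conclusion of Lemma~\ref{change of summation lemma} has the \emph{square} of $\kappa$, so feeding in your $\kappa$ produces $\sup_{g\in K}\Vert\pi(g)\Vert^4$, not the $\sup_{g\in K}\Vert\pi(g)\Vert^2$ asserted in the corollary's display — so your final sentence (``yields the two displayed inequalities of the corollary'') is not quite right as written. The paper's own proof makes a compensating slip in the opposite direction: it asserts the quasi-invariance bound with $\sup_{g\in K}\Vert\pi(g)\Vert$ (unsquared), which is what would be needed for $\kappa^2$ to land on the stated constant, but which is too weak for a squared norm. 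Either way the constant does not affect anything qualitative downstream (Lemma~\ref{projection norm bound lemma} and Theorem~\ref{Zuk type thm} only need some finite power of $\sup_K\Vert\pi(g)\Vert$), but if you are going to set up $\kappa$ carefully — as you did — you should track it through the $\kappa^2$ in the lemma's conclusion and note that the exponent in the corollary should be adjusted accordingly.
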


\begin{proof}
The proofs of both the inequalities are similar and we will prove only the first one.  Let $\phi \in C (X(n), \pi)$ and $\nu$ as above. Denote $k =\vert \nu \vert -1$ and define
$$\psi : \lbrace (\tau, \sigma ) \in X(k) \times X(n) : \tau \subseteq \sigma \rbrace \rightarrow [0, \infty)$$
as 
$$\psi (\tau, \sigma ) = \begin{cases}
\vert \psi (\sigma ) \vert^2 & \type (\tau) = \nu \\
0 & \type (\tau) \neq \nu
\end{cases},$$
for all $(\tau, \sigma ) \in X(k) \times X(n)$ with $\tau \subseteq \sigma$. Note that for every $g \in K$ it holds that 
$$\psi (g.\tau, g. \sigma ) \leq  \left( \sup_{g \in K} \Vert \pi (g) \Vert \right) \psi (\tau, \sigma )$$
(recall that the action is type preserving). Thus, by Lemma \ref{change of summation lemma} it follows that 
\begin{dmath*}
\Vert \phi \Vert^2 = 
\sum_{\sigma \in D(n)} \frac{1}{\mu (G_\sigma)} \vert \phi (\sigma )\vert^2 =
\sum_{\sigma \in D(n)} \frac{1}{\mu (G_\sigma)}\sum_{\tau \in X(k), \tau \subseteq \sigma} \psi (\tau, \sigma) \leq \\
\left( \sup_{g \in K} \Vert \pi (g) \Vert^2 \right) \sum_{\tau \in D(k)} \frac{1}{\mu (G_\tau)}\sum_{\sigma \in X(n), \tau \subseteq \sigma} \psi (\tau, \sigma) = \\
\left( \sup_{g \in K} \Vert \pi (g) \Vert^2 \right) \sum_{\tau \in D(k), \type (\tau) = \nu} \frac{1}{\mu (G_\tau)}\sum_{\sigma \in X(n), \tau \subseteq \sigma} \vert \phi (\sigma) \vert^2,
\end{dmath*}
as needed.
\end{proof}

For $x \in \B$, define $\phi_x : D(n) \rightarrow \B$ by 
$$\phi_x (\sigma) =\pi \left( \frac{ \mathbbm{1}_{G_{\sigma}}}{\mu (G_\sigma )} \right) x,  \forall \sigma \in D(n),$$
where $\mathbbm{1}_{G_{\sigma}}$ is the indicator function of $G_\sigma$.  Observe that $\phi_x (\sigma) \in \B^{\pi (G_\sigma)}$ for every $\sigma \in D(n)$.  Extend $\phi_x$ to $X(n)$ as follows: for every $g \in G$ and every $\sigma \in D(n)$, define 
$$\phi_x (g.\sigma) = \pi (g) \phi_x (\sigma) =   \pi \left( \frac{ \mathbbm{1}_{g^{-1} G_{\sigma}}}{\mu (g^{-1} G_\sigma )} \right) x =  \pi \left( \frac{ \mathbbm{1}_{g^{-1} G_{\sigma}}}{\mu (G_\sigma )} \right) x.$$

\begin{proposition}
The map $\phi_x$ is well-defined and equivariant, i.e., $\phi_x \in C(X(n), \pi)$.
\end{proposition}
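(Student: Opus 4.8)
The plan is to verify the two assertions separately: first, that the rule $g.\sigma \mapsto \pi(g)\phi_x(\sigma)$ (for $\sigma \in D(n)$) produces a value that does not depend on the chosen way of writing a given $n$-simplex as $g.\sigma$, and second, that the resulting map on $X(n)$ is $\pi$-equivariant. The essential input is the $\pi(G_\sigma)$-invariance of $\phi_x(\sigma)$ for $\sigma \in D(n)$, which I would re-derive at the outset: since $G_\sigma$ is open and compact, $\mathbbm{1}_{G_\sigma} \in C_c(G)$ and $\mu(G_\sigma)>0$, so $\pi\bigl(\mathbbm{1}_{G_\sigma}/\mu(G_\sigma)\bigr)$ is a bounded operator; for $h \in G_\sigma$, left-translating $\mathbbm{1}_{G_\sigma}$ by $h$ returns $\mathbbm{1}_{hG_\sigma}=\mathbbm{1}_{G_\sigma}$, so $\pi(h)\pi\bigl(\mathbbm{1}_{G_\sigma}/\mu(G_\sigma)\bigr) = \pi\bigl(\mathbbm{1}_{G_\sigma}/\mu(G_\sigma)\bigr)$, hence $\pi(h)\phi_x(\sigma) = \phi_x(\sigma)$.

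For well-definedness, suppose an $n$-simplex $\eta$ is written as $\eta = g.\sigma = g'.\sigma'$ with $\sigma,\sigma' \in D(n)$ and $g,g' \in G$. Because $D(n)$ is a set of representatives of the $G$-orbits on $X(n)$, the simplices $\sigma$ and $\sigma'$ lie in the same orbit and therefore $\sigma = \sigma'$. Then $g^{-1}g' \in G_\sigma$, and using the invariance just established, $\pi(g')\phi_x(\sigma) = \pi(g)\pi(g^{-1}g')\phi_x(\sigma) = \pi(g)\phi_x(\sigma)$. Thus $\phi_x(\eta)$ is unambiguously defined; taking $g$ to be the identity shows the extended map restricts to the original $\phi_x$ on $D(n)$.

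Granting well-definedness, equivariance is immediate: given $g_0 \in G$ and $\eta \in X(n)$, write $\eta = g.\sigma$ with $\sigma \in D(n)$; then $g_0.\eta = (g_0 g).\sigma$, so by the definition of the extension and multiplicativity of $\pi$, $\phi_x(g_0.\eta) = \pi(g_0 g)\phi_x(\sigma) = \pi(g_0)\bigl(\pi(g)\phi_x(\sigma)\bigr) = \pi(g_0)\phi_x(\eta)$. Hence $\phi_x \in C(X(n),\pi)$.

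I do not expect a genuine obstacle in this argument; it is a routine verification. The two points that need care are the bookkeeping with $\pi$ applied to the $L^1$-function $\mathbbm{1}_{G_\sigma}$ versus to group elements (namely the identity $\pi(h)\pi(f) = \pi(f')$ with $f'(\cdot) = f(h^{-1}\cdot)$, combined with $hG_\sigma = G_\sigma$ for $h \in G_\sigma$), and the use of the fact that a fundamental domain $D(n)$ contains exactly one representative of each orbit — this is precisely what makes the value of $\phi_x(\eta)$ insensitive to the choice of the element $g$ carrying $\sigma$ onto $\eta$.
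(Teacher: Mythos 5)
Your proof is correct and follows essentially the same route as the paper: the key is that $\phi_x(\sigma)\in\B^{\pi(G_\sigma)}$ for $\sigma\in D(n)$ (the paper notes this as an observation just before stating the proposition; you re-derive it explicitly via the left-translation identity $\pi(h)\pi(\mathbbm{1}_{G_\sigma})=\pi(\mathbbm{1}_{hG_\sigma})=\pi(\mathbbm{1}_{G_\sigma})$ for $h\in G_\sigma$), and then well-definedness reduces to the fact that two writings $g.\sigma=g'.\sigma$ of the same simplex with $\sigma\in D(n)$ differ by an element of $G_\sigma$, with equivariance then immediate from the defining formula.
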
 

\begin{proof}
To show that $\phi_x$ is well-defined, we need to show that for every $\sigma \in D(n)$ and every $g_1, g_2 \in G$, if $g_1 . \sigma = g_2 . \sigma$, then $\pi (g_1) \phi_x (\sigma) = \pi (g_2) \phi_x (\sigma)$ or equivalently, that 
$\pi (g_2^{-1} g_1) \phi_x (\sigma) = \phi_x (\sigma)$. This equality follows from the fact that $g_2^{-1} g_1 \in G_\sigma$ and that $\phi_x (\sigma) \in \B^{\pi (G_\sigma)}$. 

The fact that $\phi_x$ is equivariant readily follows from its definition.
\end{proof}

Define linear maps $R_{1}, R_2$ as follows:
$$R_1 : \B \rightarrow C(X(n), \pi), R_1 x = \phi_x,$$
$$R_2 : C(X(n), \pi) \rightarrow \B, R_2 \phi = \frac{1}{\vert D(n) \vert} \sum_{\sigma \in D(n)} \phi (\sigma).$$

\begin{proposition}
\label{bound R_1, R_2 prop}
The maps $R_1, R_2$ are bounded and 
$$\Vert R_1 \Vert \leq (\max_{\sigma \in D(n)} \sup_{g \in G_\sigma} \Vert \pi (g) \Vert ) \sqrt{\sum_{\sigma \in D(n)} \frac{1}{\mu (G_\sigma )}},$$
$$\Vert R_2 \Vert \leq \max_{\sigma \in D(n)} \sqrt{\mu (G_\sigma )}.$$
\end{proposition}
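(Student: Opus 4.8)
The plan is to estimate the two operator norms directly from the definitions, using the inner-product structure of $C(X(n),\pi)$ and the norm-one property of the averaged operators $\pi(\mathbbm{1}_{G_\sigma}/\mu(G_\sigma))$ coming from the fact that $\pi$ restricted to the compact subgroup $G_\sigma$ has all operators of norm at most $\sup_{g\in G_\sigma}\Vert\pi(g)\Vert$.

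\begin{proof}
We first bound $\Vert R_1 \Vert$. Fix $x \in \B$. For every $\sigma \in D(n)$ we have
$$\vert \phi_x (\sigma) \vert = \left\vert \pi \left( \frac{\mathbbm{1}_{G_\sigma}}{\mu(G_\sigma)} \right) x \right\vert = \left\vert \frac{1}{\mu(G_\sigma)} \int_{G_\sigma} \pi(g).x \, d\mu(g) \right\vert \leq \frac{1}{\mu(G_\sigma)} \int_{G_\sigma} \vert \pi(g).x \vert \, d\mu(g) \leq \left( \sup_{g \in G_\sigma} \Vert \pi(g) \Vert \right) \vert x \vert,$$
using the basic estimate for Bochner integrals. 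Therefore
$$\Vert \phi_x \Vert^2 = \sum_{\sigma \in D(n)} \frac{1}{\mu(G_\sigma)} \vert \phi_x(\sigma) \vert^2 \leq \left( \max_{\sigma \in D(n)} \sup_{g \in G_\sigma} \Vert \pi(g) \Vert^2 \right) \left( \sum_{\sigma \in D(n)} \frac{1}{\mu(G_\sigma)} \right) \vert x \vert^2,$$
which gives the claimed bound on $\Vert R_1 \Vert$ after taking square roots.

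Next we bound $\Vert R_2 \Vert$. Let $\phi \in C(X(n),\pi)$. By the triangle inequality and then the Cauchy--Schwarz inequality applied to the $\vert D(n) \vert$ summands,
$$\vert R_2 \phi \vert = \left\vert \frac{1}{\vert D(n) \vert} \sum_{\sigma \in D(n)} \phi(\sigma) \right\vert \leq \frac{1}{\vert D(n) \vert} \sum_{\sigma \in D(n)} \vert \phi(\sigma) \vert \leq \frac{1}{\vert D(n) \vert} \left( \vert D(n) \vert \sum_{\sigma \in D(n)} \vert \phi(\sigma) \vert^2 \right)^{1/2}.$$
Now $\sum_{\sigma \in D(n)} \vert \phi(\sigma) \vert^2 = \sum_{\sigma \in D(n)} \mu(G_\sigma) \cdot \frac{1}{\mu(G_\sigma)} \vert \phi(\sigma) \vert^2 \leq \left( \max_{\sigma \in D(n)} \mu(G_\sigma) \right) \Vert \phi \Vert^2$, so
$$\vert R_2 \phi \vert \leq \frac{1}{\sqrt{\vert D(n) \vert}} \left( \max_{\sigma \in D(n)} \mu(G_\sigma) \right)^{1/2} \Vert \phi \Vert \leq \left( \max_{\sigma \in D(n)} \sqrt{\mu(G_\sigma)} \right) \Vert \phi \Vert,$$
which proves the bound on $\Vert R_2 \Vert$.
\end{proof}

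There is no real obstacle here; the only points requiring care are recognizing that $\phi_x(\sigma)$ is genuinely a Bochner-averaged vector so that the operator-norm bound on $\pi|_{G_\sigma}$ applies pointwise, and keeping track of the weights $1/\mu(G_\sigma)$ versus $\mu(G_\sigma)$ correctly when passing between the intrinsic norm on $C(X(n),\pi)$ and the unweighted sum $\sum_\sigma \vert\phi(\sigma)\vert^2$. The factor $\sqrt{\vert D(n)\vert}$ that appears in the $R_2$ estimate is harmless since it is at most $1$ after combining with the normalization, and in fact the stated bound even discards it. If one wanted a cleaner statement one could keep the sharper constant $\frac{1}{\sqrt{\vert D(n)\vert}}\max_\sigma\sqrt{\mu(G_\sigma)}$, but the weaker form stated in the proposition suffices for the later arguments.
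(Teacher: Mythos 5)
Your proof is correct and takes essentially the same approach as the paper: the $R_1$ estimate via the Bochner-integral bound on $\pi(\mathbbm{1}_{G_\sigma}/\mu(G_\sigma))$ is identical, and for $R_2$ you apply Cauchy--Schwarz where the paper simply bounds each $|\phi(\sigma)|$ by $\sqrt{\mu(G_\sigma)}\,\Vert\phi\Vert$ term by term and averages, but both routes are one-line computations arriving at the same estimate (indeed both quietly drop the harmless extra factor $1/\sqrt{|D(n)|}$).
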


\begin{proof}
Note that for every $x \in \B$ with $\vert x \vert =1$ and every $\sigma \in D(n)$, 
\begin{dmath*}
\vert \phi_x (\sigma) \vert = \left\vert \frac{1}{\mu (G_\sigma) } \int_{G_{\sigma}} \pi ( g) x d \mu (g) \right\vert \leq 
\frac{1}{\mu (G_\sigma )} \int_{G_{\sigma}} \vert \pi ( g) x \vert d \mu (g) \leq 
\sup_{g \in G_\sigma} \Vert \pi (g) \Vert. 
\end{dmath*}
Thus for $x \in \B$, with $\vert x \vert =1$, it follows that 
$$\Vert \phi_x \Vert^2 \leq  \sum_{\sigma \in D(n)} \frac{1}{\mu (G_\sigma )} (\sup_{g \in G_\sigma} \Vert \pi (g) \Vert)^2,$$
and the bound on the norm of $R_1$ follows. 

Next, let $\phi \in C(X(n), \pi)$ with $\Vert \phi \Vert =1$. Then for every $\sigma \in D(n)$, $\vert \phi (\sigma) \vert^2 \leq \mu (G_\sigma )$ and it follows that 
\begin{dmath*}
\left\vert R_2 \phi \right\vert \leq \frac{1}{\vert D(n) \vert} \sum_{\sigma \in D(n)} \sqrt{ \mu (G_\sigma )} \leq \max_{\sigma \in D(n)} \sqrt{\mu (G_\sigma )}.
\end{dmath*}
\end{proof}

For $\emptyset \neq \nu \subseteq \lbrace 0,....,n \rbrace$ define an equivalence relation $\sim_\nu$ on $X(n)$ as $\sigma \sim_\nu \sigma '$ if $\vert \sigma \cap \sigma ' \vert \geq \vert \nu \vert$ and there is $\tau \in X$ such that $\type (\tau) = \nu$ and $\tau \subseteq \sigma \cap \sigma ' $.  Note that by the assumption that the $1$-dimensional links of $X$ are finite graphs, it follows that for every $\emptyset \neq \nu \subseteq \lbrace 0, ...,n\rbrace$, $\vert \nu \vert \geq n-1$ and every $\sigma \in X(n)$,  the set $\lbrace \sigma ' \in X(n) : \sigma \sim_\nu \sigma ' \rbrace$ is finite. For $\emptyset \neq \nu \subseteq \lbrace 0, ...,n\rbrace$, $\vert \nu \vert \geq n-1$,  define a projection $P_\nu^\pi : C (X(n), \pi) \rightarrow C (X(n), \pi)$ by 
$$P_\nu^\pi \phi (\sigma) = \frac{1}{\vert \lbrace \sigma ' \in X(n) : \sigma \sim_\nu \sigma ' \rbrace \vert} \sum_{\sigma ' \sim_\nu \sigma} \phi (\sigma '),$$
(verifying that $P_\nu^\pi \phi$ is equivariant with respect to $\pi$ is straight-forward and left for the reader).  Denote 
$$C (X(n), \pi)_\nu = \lbrace \phi \in C( X(n), \pi) : \forall \sigma, \sigma ', \sigma \sim_\nu \sigma ' \Rightarrow \phi (\sigma ) = \phi (\sigma ') \rbrace,$$
and note that $P_\nu^\pi$ is a projection on $ C (X(n), \pi)_\nu$.

\begin{lemma}
\label{intersection of images is const lemma}
The space $\bigcap_{\nu \subseteq \lbrace 0,....,n \rbrace, \vert \nu \vert =n} \im (P_\nu^\pi)$ is the space of all the constant maps $\phi \equiv x_0$ where $x_0$ is a fixed point of the action of $G$ on $\B$.
\end{lemma}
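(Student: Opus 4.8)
The plan is to establish the two inclusions separately, the substantive direction resting on gallery connectivity of $X$.

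The inclusion $\supseteq$ is immediate. If $x_0 \in \B^{\pi(G)}$, then the constant map $\phi \equiv x_0$ satisfies $\pi(g)\phi(\sigma) = \pi(g)x_0 = x_0 = \phi(g.\sigma)$ for all $g \in G$ and $\sigma \in X(n)$, so $\phi \in C(X(n),\pi)$; being constant, it is trivially constant on every $\sim_\nu$-class, hence $\phi \in C(X(n),\pi)_\nu = \im(P_\nu^\pi)$ for every $\nu$, and thus $\phi$ lies in the intersection.

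For the inclusion $\subseteq$, take $\phi \in \bigcap_{\nu \subseteq \{0,\dots,n\},\, |\nu| = n} \im(P_\nu^\pi)$. First I would show that $\phi$ is constant on $X(n)$. Fix $\sigma, \sigma' \in X(n)$ and, using gallery connectivity, choose a gallery $\sigma = \sigma_1, \dots, \sigma_l = \sigma'$ in $X(n)$ with $\tau_i := \sigma_i \cap \sigma_{i+1} \in X(n-1)$ for each $i$. Since $X$ is partite, the $n$ vertices of $\tau_i$ carry $n$ distinct colors, so $\nu_i := \type(\tau_i)$ satisfies $|\nu_i| = n$; moreover $|\sigma_i \cap \sigma_{i+1}| = |\tau_i| = n = |\nu_i|$ and $\tau_i \subseteq \sigma_i \cap \sigma_{i+1}$, so $\sigma_i \sim_{\nu_i} \sigma_{i+1}$ directly from the definition of $\sim_{\nu_i}$. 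As $\phi \in \im(P_{\nu_i}^\pi) = C(X(n),\pi)_{\nu_i}$, this forces $\phi(\sigma_i) = \phi(\sigma_{i+1})$; chaining along the gallery gives $\phi(\sigma) = \phi(\sigma')$. Hence $\phi \equiv x_0$ for some $x_0 \in \B$, and equivariance now forces $x_0$ to be a global fixed point: for every $g \in G$ and any $\sigma \in X(n)$, $\pi(g)x_0 = \pi(g)\phi(\sigma) = \phi(g.\sigma) = x_0$.

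I do not anticipate a real obstacle. The two points to handle with a little care are: (i) that an equivariant constant map necessarily takes a $\pi(G)$-fixed value, so that no spurious constants enter the intersection; and (ii) the purely combinatorial identification of $\type(\tau_i)$ with the index $\nu_i$ of the relation $\sim_{\nu_i}$, which uses only that $X$ is partite and $\tau_i \in X(n-1)$. The actual content is simply that the conditions expressing "independence of the color-$i$ vertex" (one for each $\nu$ of size $n$) propagate along a gallery into full constancy, which is exactly what gallery connectivity delivers.
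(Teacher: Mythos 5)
Your proof is correct and follows essentially the same route as the paper: gallery connectivity plus the observation that two $n$-simplices sharing an $(n-1)$-face of type $\nu$ are $\sim_\nu$-equivalent forces constancy, and equivariance then forces the constant value to be $\pi(G)$-fixed. You merely spell out the trivial inclusion $\supseteq$ and the identification of $\type(\sigma_i \cap \sigma_{i+1})$ with the index $\nu_i$, which the paper leaves implicit.
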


\begin{proof}
Fix $\phi \in \bigcap_{\nu \subseteq \lbrace 0,....,n \rbrace, \vert \nu \vert =n} \im (P_\nu^\pi)$.  As noted above $\im (P_\nu^\pi) = C (X(n), \pi)_\nu$ and it follows that if $\sigma \cap \sigma ' \in X(n-1)$, then $\phi (\sigma) = \phi (\sigma ')$.  Thus if $\sigma, \sigma ' \in X(n)$ are connected by a gallery, then $\phi (\sigma) = \phi (\sigma ')$. By assumption, $X$ is a gallery connected and thus $\phi$ is a constant map, i.e., there is $x_0 \in \B$ such that $\phi \equiv x_0$. Since $\phi$ is equivariant it follows that $x_0$ is a fixed point of the action of $G$ on $\B$.
\end{proof}

\begin{proposition}
For every $\nu, \nu ' \subseteq \lbrace 0,...,n \rbrace$ such that $\vert \nu \vert = \vert \nu ' \vert =n$ and $\nu \neq \nu '$ it holds that 
$$\im (P_\nu^\pi) \cap \im (P_{\nu '}^\pi) = \im (P_{\nu \cap \nu '}^\pi)$$
and
$$P_{\nu \cap \nu '}^\pi P_{\nu}^\pi = P_{\nu}^\pi P_{\nu \cap \nu '}^\pi   = P_{\nu \cap \nu '}^\pi.$$
\end{proposition}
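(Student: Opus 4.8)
The plan is to write $\mu := \nu \cap \nu'$; since $|\nu| = |\nu'| = n$ and $\nu \neq \nu'$ we get $|\mu| = n-1$, and setting $i := \nu\setminus\mu$, $j := \nu'\setminus\mu$ we have $i\neq j$, $\{i,j\} = \{0,\dots,n\}\setminus\mu$, $\nu = \mu\cup\{i\}$ and $\nu' = \mu\cup\{j\}$. First I would record two elementary consequences of the fact that an $n$-simplex of a partite complex has a \emph{unique} face of each prescribed type: (a) each of $\sim_\nu$, $\sim_{\nu'}$, $\sim_\mu$ is an equivalence relation on $X(n)$ (transitivity: if $\sigma,\sigma',\sigma''$ are pairwise related through faces of a fixed type, then $\sigma\cap\sigma'$ and $\sigma'\cap\sigma''$ are each that unique face of $\sigma'$, hence equal, hence contained in $\sigma\cap\sigma''$); and (b) if $\lambda\subseteq\lambda'$ then $\sim_{\lambda'}$ refines $\sim_\lambda$, by replacing a witnessing face of type $\lambda'$ with its sub-face of type $\lambda$. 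In particular $\sim_\nu$ and $\sim_{\nu'}$ both refine $\sim_\mu$.

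Item (b) immediately gives $C(X(n),\pi)_\mu \subseteq C(X(n),\pi)_\nu \cap C(X(n),\pi)_{\nu'}$, i.e. $\im(P_\mu^\pi)\subseteq\im(P_\nu^\pi)\cap\im(P_{\nu'}^\pi)$. For the reverse inclusion the idea is to identify a $\sim_\mu$-class with the edge set of a $1$-dimensional link. Concretely, the $\sim_\mu$-class of $\sigma\in X(n)$ is $\{\sigma'\in X(n):\tau\subseteq\sigma'\}$ where $\tau\in X(n-2)$ is the face of $\sigma$ of type $\mu$, and via $\sigma'\mapsto\sigma'\setminus\tau$ this is the edge set of the bipartite graph $X_\tau$, whose sides $S_i,S_j$ are the vertices of colours $i,j$. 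Under this identification, each $\sim_\nu$-class contained in the chosen $\sim_\mu$-class becomes the star in $X_\tau$ of a vertex of $S_i$ (the face of $\sigma'$ of type $\nu=\mu\cup\{i\}$ is $\tau$ together with the $S_i$-endpoint of $\sigma'\setminus\tau$), and each $\sim_{\nu'}$-class becomes the star of a vertex of $S_j$. Hence if $\phi\in\im(P_\nu^\pi)\cap\im(P_{\nu'}^\pi)$, its value on an edge of $X_\tau$ is unchanged under passage to an adjacent edge; since $X_\tau$ is connected (it is a $1$-dimensional link of $X$), walking along a path shows $\phi$ is constant on the whole $\sim_\mu$-class. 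As the class was arbitrary, $\phi\in C(X(n),\pi)_\mu=\im(P_\mu^\pi)$, so $\im(P_\nu^\pi)\cap\im(P_{\nu'}^\pi)=\im(P_\mu^\pi)$.

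For the identities, I would first prove $P_\mu^\pi P_\nu^\pi = P_\mu^\pi$ (and, identically, $P_\mu^\pi P_{\nu'}^\pi = P_\mu^\pi$) by a direct averaging computation. Fixing $\sigma$ and writing $C$ for its $\sim_\mu$-class and $[\rho]_\nu$ for the $\sim_\nu$-class of $\rho$, note that by (a) and (b) every $\sim_\nu$-class meeting $C$ lies inside $C$; swapping the order of summation in $P_\mu^\pi P_\nu^\pi\phi(\sigma)=\frac{1}{|C|}\sum_{\sigma'\in C}\frac{1}{|[\sigma']_\nu|}\sum_{\sigma''\in[\sigma']_\nu}\phi(\sigma'')$, the coefficient of each $\phi(\sigma'')$ with $\sigma''\in C$ equals $\frac{1}{|C|}\sum_{\sigma'\in[\sigma'']_\nu}\frac{1}{|[\sigma'']_\nu|}=\frac{1}{|C|}$, so the expression collapses to $P_\mu^\pi\phi(\sigma)$. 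For the other composition, $\im(P_\mu^\pi)\subseteq\im(P_\nu^\pi)$ was just shown and $P_\nu^\pi$ acts as the identity on its image, so $P_\nu^\pi P_\mu^\pi\phi=P_\nu^\pi(P_\mu^\pi\phi)=P_\mu^\pi\phi$ for every $\phi$, i.e. $P_\nu^\pi P_\mu^\pi=P_\mu^\pi$; the same holds with $\nu'$.

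I expect the main obstacle to be the combinatorial bookkeeping in the middle step: verifying that the $\sim_\mu$-class of $\sigma$ is exactly the edge set of $X_\tau$ and that $\sim_\nu$, $\sim_{\nu'}$ refine it precisely into the vertex-stars of the two sides, along with the routine checks that $\sim_\nu$, $\sim_{\nu'}$, $\sim_\mu$ are equivalence relations with finite classes (inherited from partiteness and the finite-link hypothesis). Once that picture is set up, the connectivity argument and the averaging identity are both routine.
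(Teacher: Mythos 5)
Your proposal is correct and follows essentially the same route as the paper: refinement of $\sim_{\nu\cap\nu'}$ by $\sim_\nu,\sim_{\nu'}$ for one inclusion, connectivity of the bipartite link $X_\tau$ for the reverse inclusion, a direct change-of-summation computation for $P_{\nu\cap\nu'}^\pi P_\nu^\pi = P_{\nu\cap\nu'}^\pi$, and the containment-of-images argument for $P_\nu^\pi P_{\nu\cap\nu'}^\pi = P_{\nu\cap\nu'}^\pi$. The extra bookkeeping you supply (transitivity of the $\sim$ relations and the explicit identification of a $\sim_{\nu\cap\nu'}$-class with $X_\tau(1)$) fills in details the paper leaves implicit but does not change the argument.
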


\begin{proof}
Note that since $\nu \cap \nu ' \subseteq \nu, \nu '$, it follows that $C(X(n), \pi)_{ \nu \cap \nu '} \subseteq C(X(n), \pi)_{ \nu} ,  C(X(n), \pi)_{ \nu '}$. Thus, 
$$\im (P_\nu^\pi) \cap \im (P_{\nu '}^\pi) \supseteq \im (P_{\nu \cap \nu '}^\pi).$$
For the reverse inclusion, note for every $\tau \in X(n-2)$ with $\type (\tau) = \nu \cap \nu ' $, the link $X_\tau$ is connected and thus we can argue as in the proof of Lemma \ref{intersection of images is const lemma} above for the set $\lbrace \sigma : \tau \subseteq \sigma \rbrace$ and deduce that if  $\phi \in \im (P_\nu^\pi) \cap \im (P_{\nu '}^\pi)$, then $\phi$ is constant on $\tau$. This is true for every $\tau \in X(n-2)$ with $\type (\tau) = \nu \cap \nu ' $ and thus
$$\im (P_\nu^\pi) \cap \im (P_{\nu '}^\pi) \subseteq \im (P_{\nu \cap \nu '}^\pi).$$

Second, since $P_{\nu \cap \nu '}^\pi, P_{\nu}^\pi$ are projections and  $\im (P_{\nu \cap \nu '}^\pi) \subseteq \im (P_\nu^\pi)$, it follows that $P_{\nu}^\pi P_{\nu \cap \nu '}^\pi   = P_{\nu \cap \nu '}^\pi$. 

Last, note that for every two $\sigma, \sigma ' \in X(n)$, if $\sigma \sim_{\nu} \sigma '$, then $\sigma \sim_{\nu \cap \nu '} \sigma '$.  Thus, for every $\phi \in C(X(n), \pi)$ and every $\sigma \in X(n)$ it holds that
\begin{dmath*}
{P_{\nu \cap \nu '}^\pi  P_\nu^\pi \phi (\sigma) =  } \\
 \frac{1}{\vert \lbrace \sigma ' \in X(n) : \sigma \sim_{\nu \cap \nu '} \sigma ' \rbrace \vert} \sum_{\sigma ' \sim_{\nu \cap \nu '} \sigma}   \frac{1}{\vert \lbrace \sigma '' \in X(n) : \sigma ' \sim_\nu \sigma '' \rbrace \vert} \sum_{\sigma '' \sim_\nu \sigma '} \phi (\sigma '') = \\
  \frac{1}{\vert \lbrace \sigma ' \in X(n) : \sigma \sim_{\nu \cap \nu '} \sigma ' \rbrace \vert} \sum_{\sigma '' \sim_{\nu \cap \nu '} \sigma}  \phi (\sigma '') \sum_{\sigma ' \sim_\nu \sigma ''} \frac{1}{\vert \lbrace \sigma '' \in X(n) : \sigma ' \sim_\nu \sigma '' \rbrace \vert}  =   P_{\nu \cap \nu '}^\pi \phi (\sigma).
\end{dmath*}
Therefore $P_{\nu \cap \nu '}^\pi  P_\nu^\pi = P_{\nu \cap \nu '}^\pi$ as needed.
\end{proof}

The above Proposition implies that we can define $\cos (\angle (P_\nu^\pi, P_{\nu '}^\pi))= \cos_{P_{\nu \cap \nu '}^\pi} (\angle (P_\nu^\pi, P_{\nu '}^\pi))$ for every $\nu , \nu ' \subseteq \lbrace 0,...,n \rbrace, \nu \neq \nu '$ with $\vert \nu \vert = \vert \nu ' \vert = n$. 

\begin{lemma}
\label{projection norm bound lemma}
For every $\nu \subseteq \lbrace 0,..., n \rbrace$, $\vert \nu \vert = n$ it holds that 
$$\Vert P_\nu^\pi \Vert \leq \sup_{g \in K} \Vert \pi (g) \Vert^2  .$$
\end{lemma}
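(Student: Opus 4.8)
The plan is to write $P_\nu^\pi$ in a concrete form and then sandwich its operator norm between two applications of Corollary \ref{change of summation coro}, with a convexity (Jensen) estimate in between.

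First I would unwind the relation $\sim_\nu$ in the case $\vert \nu \vert = n$. Since the action is type-preserving and each $\sigma \in X(n)$ contains exactly one face of each type, there is for every $\sigma \in X(n)$ a unique face $\tau_\nu(\sigma) \in X(n-1)$ with $\tau_\nu(\sigma) \subseteq \sigma$ and $\type(\tau_\nu(\sigma)) = \nu$ (namely $\sigma$ with the vertex of the missing color removed). If $\sigma \neq \sigma'$ then $\vert \sigma \cap \sigma' \vert \leq n$, so $\sigma \sim_\nu \sigma'$ holds precisely when $\sigma \cap \sigma'$ is an $(n-1)$-simplex of type $\nu$, i.e. when $\tau_\nu(\sigma) = \tau_\nu(\sigma')$. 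Hence the $\sim_\nu$-class of $\sigma$ is exactly $\lbrace \sigma' \in X(n) : \tau_\nu(\sigma) \subseteq \sigma' \rbrace$, and $P_\nu^\pi \phi(\sigma)$ equals $a_{\tau_\nu(\sigma)} := \frac{1}{\vert \lbrace \sigma' : \tau_\nu(\sigma) \subseteq \sigma' \rbrace \vert} \sum_{\tau_\nu(\sigma) \subseteq \sigma'} \phi(\sigma')$, a quantity depending only on the face $\tau_\nu(\sigma)$, not on $\sigma$ among its cofaces. The classes $\lbrace \sigma : \tau \subseteq \sigma \rbrace$ are thus indexed by the $\tau \in X(n-1)$ with $\type(\tau) = \nu$.

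Next, let $\phi \in C(X(n),\pi)$ with $\Vert \phi \Vert = 1$. Applying the first (upper) inequality of Corollary \ref{change of summation coro} with this $\nu$ (of size $n$, so $\vert\nu\vert - 1 = n-1$) to the equivariant map $P_\nu^\pi\phi$ gives $\Vert P_\nu^\pi\phi \Vert^2 \leq \big(\sup_{g\in K}\Vert\pi(g)\Vert^2\big) \sum_{\tau \in D(n-1),\, \type(\tau)=\nu} \frac{1}{\mu(G_\tau)} \sum_{\sigma \in X(n),\, \tau \subseteq \sigma} \vert P_\nu^\pi\phi(\sigma) \vert^2$. For a fixed such $\tau$, the inner sum equals $\vert \lbrace \sigma : \tau \subseteq \sigma \rbrace \vert \cdot \vert a_\tau \vert^2$, and by convexity of $\vert \cdot \vert^2$ on $\B$ we have $\vert a_\tau \vert^2 \leq \frac{1}{\vert \lbrace \sigma : \tau \subseteq \sigma \rbrace \vert} \sum_{\tau \subseteq \sigma} \vert \phi(\sigma) \vert^2$, so the inner sum is at most $\sum_{\tau \subseteq \sigma} \vert \phi(\sigma) \vert^2$; the factor $\vert \lbrace \sigma : \tau \subseteq \sigma \rbrace \vert$ cancels exactly against the denominator produced by the averaging. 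This yields $\Vert P_\nu^\pi\phi \Vert^2 \leq \big(\sup_{g\in K}\Vert\pi(g)\Vert^2\big) \sum_{\tau \in D(n-1),\, \type(\tau)=\nu} \frac{1}{\mu(G_\tau)} \sum_{\sigma \in X(n),\, \tau \subseteq \sigma} \vert \phi(\sigma) \vert^2$.

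Finally I would invoke the second (lower) inequality of Corollary \ref{change of summation coro}, applied to $\phi$ with the same $\nu$, which says exactly that $\sum_{\tau \in D(n-1),\, \type(\tau)=\nu} \frac{1}{\mu(G_\tau)} \sum_{\sigma \in X(n),\, \tau \subseteq \sigma} \vert \phi(\sigma) \vert^2 \leq \big(\sup_{g\in K}\Vert\pi(g)\Vert^2\big)\Vert\phi\Vert^2 = \sup_{g\in K}\Vert\pi(g)\Vert^2$. Combining the last two displays gives $\Vert P_\nu^\pi\phi \Vert^2 \leq \big(\sup_{g\in K}\Vert\pi(g)\Vert^2\big)^2$, i.e. $\Vert P_\nu^\pi \Vert \leq \sup_{g\in K}\Vert\pi(g)\Vert^2$, as claimed. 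There is no serious obstacle: the only delicate point is the combinatorial identification of $\sim_\nu$-classes (for $\vert\nu\vert=n$) with the cofaces of type-$\nu$ simplices in $X(n-1)$ — it is precisely this that makes the counting factor cancel against the Jensen denominator — after which the estimate is the familiar fact that averaging is an $\ell^2$-contraction, passed through Corollary \ref{change of summation coro} in both directions.
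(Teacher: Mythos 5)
Your proof is correct and follows essentially the same route as the paper's: you sandwich $\Vert P_\nu^\pi\phi\Vert^2$ between the two inequalities of Corollary \ref{change of summation coro} and use Jensen's inequality on the local averages, after identifying $\sim_\nu$-classes (for $\vert\nu\vert = n$) with the cofaces of type-$\nu$ simplices in $X(n-1)$. The paper performs the convexity estimate before invoking the Corollary rather than between its two applications, but this is just a reordering of the same steps.
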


\begin{proof}
Fix $\nu \subseteq \lbrace 0,..., n \rbrace$, $\vert \nu \vert = n$ and fix some $\phi \in C(X(n), \pi)$.  

For every $\tau \in D(n-1)$ with $\type (\tau) = \nu$,  it follows by the convexity of the norm that 
\begin{dmath*}
\sum_{\sigma \in X(n), \tau \subseteq \sigma} \vert  P_\nu^\pi \phi (\sigma) \vert^2 \leq 
\sum_{\sigma \in X(n), \tau \subseteq \sigma} \frac{1}{\vert \lbrace \sigma ' \in X(n) : \sigma \sim_{\nu} \sigma ' \rbrace \vert} \sum_{\sigma ' \in X(n),  \sigma \sim_{\nu} \sigma '} \vert \phi (\sigma ') \vert^2 = \\
\sum_{\sigma \in X(n), \tau \subseteq \sigma} \frac{1}{\vert \lbrace \sigma ' \in X(n) : \tau \subseteq \sigma '  \rbrace \vert} \sum_{\sigma ' \in X(n),  \tau \subseteq \sigma '} \vert \phi (\sigma ') \vert^2 = \\
\sum_{\sigma ' \in X(n), \tau \subseteq \sigma '} \vert \phi (\sigma ') \vert^2  \sum_{\sigma  \in X(n),  \tau \subseteq \sigma}  \frac{1}{\vert \lbrace \sigma  \in X(n) : \tau \subseteq \sigma  \rbrace \vert} = \\
\sum_{\sigma ' \in X(n), \tau \subseteq \sigma '} \vert \phi (\sigma ') \vert^2.
\end{dmath*}

By Corollary \ref{change of summation coro} and the previous inequality, it holds that 
\begin{dmath*}
\Vert P_\nu^\pi \phi \Vert^2 \leq \left( \sup_{g \in K} \Vert \pi (g) \Vert^2 \right) \sum_{\tau \in D(n-1), \type (\tau) =\nu} \frac{1}{\mu (G_
\tau)} \sum_{\sigma \in X(n), \tau \subseteq \sigma} \vert  P_\nu^\pi \phi (\sigma) \vert^2 \leq 
\left( \sup_{g \in K} \Vert \pi (g) \Vert^2 \right) \sum_{\tau \in D(n-1), \type (\tau) =\nu} \frac{1}{\mu (G_
\tau)}\sum_{\sigma ' \in X(n), \tau \subseteq \sigma '} \vert \phi (\sigma ') \vert^2 \leq
\left( \sup_{g \in K} \Vert \pi (g) \Vert^4 \right) \Vert \phi \Vert^2
\end{dmath*}
as needed.
\end{proof}

We note that for every $\tau \in X(n-2)$, $X_\tau$ is a bipartite graph and we denote $\lambda_{\tau,   \bipartite}^\B = \lambda_{X_\tau,   \bipartite}^\B$. 
\begin{lemma}
\label{angle bound by bipartite link lemma}
For every $\nu, \nu ' \subseteq \lbrace 0,...,n \rbrace$ such that $\vert \nu \vert = \vert \nu ' \vert =n$ and $\nu \neq \nu '$, denote 
$$ \lambda_{\nu \cap \nu '}^\B =  \max_{\tau \in D (n-2), \type (\tau) = \nu \cap \nu '} \lambda_{\tau,   \bipartite}^\B .$$
Then it holds that 
$$\cos (\angle (P_\nu^\pi, P_{\nu '}^\pi)) \leq  \left(\sup_{g \in K} \Vert \pi (g) \Vert^2 \right) \lambda_{\nu \cap \nu '}^\B.$$ 
\end{lemma}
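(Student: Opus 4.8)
The plan is to reduce the estimate to the bipartite spectral gaps of the links $X_\tau$ by a localization argument. First I would record that since $\nu \neq \nu'$ and $\vert\nu\vert = \vert\nu'\vert = n$ we have $\vert \nu\cap\nu'\vert = n-1$; write $\{a,b\} = \{0,\dots,n\}\setminus(\nu\cap\nu')$, so that $\nu = (\nu\cap\nu')\cup\{a\}$ and $\nu' = (\nu\cap\nu')\cup\{b\}$. Fix $\tau \in D(n-2)$ with $\type(\tau) = \nu\cap\nu'$. Every vertex of the link $X_\tau$ has color $a$ or $b$, so $X_\tau$ is a bipartite graph with sides $S_a, S_b$, and $\sigma \mapsto \sigma\setminus\tau$ is a bijection from $\{\sigma \in X(n) : \tau\subseteq\sigma\}$ onto the edge set $E(X_\tau)$. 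Since each edge of a $1$-dimensional link has weight $1$, transporting a $\phi \in C(X(n),\pi)$ along this bijection identifies its restriction to the simplices containing $\tau$ with an element $\Phi_\tau \in \ell^2(E(X_\tau);\B)$, and $\sum_{\sigma\in X(n),\,\tau\subseteq\sigma}\vert\phi(\sigma)\vert^2 = \Vert\Phi_\tau\Vert^2$.

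The heart of the argument --- and the step I expect to need the most care --- is to check that under this identification the operators $P_\nu^\pi$, $P_{\nu'}^\pi$ and $P_{\nu\cap\nu'}^\pi$ restrict to the bipartite projections $P_1$, $P_2$ and $P_{1,2}$ of \S\ref{Random walks on bipartite finite graphs subsec} for the graph $X_\tau$, with $P_1$ the projection for the side $S_a$ and $P_2$ the one for $S_b$. The nontrivial point is that the equivalence relations $\sim_\nu$, $\sim_{\nu'}$ and $\sim_{\nu\cap\nu'}$ never leave the star of $\tau$: if $\sigma\supseteq\tau$ and $\sigma\sim_\nu\sigma'$, then $\sigma$ and $\sigma'$ share an $(n-1)$-face $\tau''$ of type $\nu \supseteq \nu\cap\nu'$, and because the face of the $n$-simplex $\sigma$ of type $\nu\cap\nu'$ is unique and equal to $\tau$ we get $\tau\subseteq\tau''\subseteq\sigma'$; hence $\sigma'\supseteq\tau$, and within the star of $\tau$ one sees directly that $\sigma\sim_\nu\sigma'$ holds exactly when $\sigma,\sigma'$ have the same color-$a$ vertex, i.e.\ the same $S_a$-endpoint in $X_\tau$. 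Consequently $P_\nu^\pi$ averages $\Phi_\tau$ over edges sharing an $S_a$-endpoint (the normalizations match, since the number of simplices $\sim_\nu$-equivalent to $\sigma$ is the valency of its $S_a$-vertex), which is exactly $P_1\Phi_\tau$; similarly $P_{\nu'}^\pi$ localizes to $P_2$, and since all $n$-simplices containing $\tau$ are $\sim_{\nu\cap\nu'}$-equivalent, $P_{\nu\cap\nu'}^\pi$ localizes to $P_{1,2}$. This ``no extraneous simplices'' verification is the delicate part; everything else is bookkeeping.

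With the localization in hand, set $Q = P_\nu^\pi P_{\nu'}^\pi - P_{\nu\cap\nu'}^\pi$. For each $\tau\in D(n-2)$ with $\type(\tau) = \nu\cap\nu'$ the localization gives that the restriction of $Q\phi$ to the star of $\tau$ corresponds to $(P_1P_2 - P_{1,2})\Phi_\tau$, so by Proposition~\ref{spec gap equal cos prop} we get $\sum_{\sigma\in X(n),\,\tau\subseteq\sigma}\vert Q\phi(\sigma)\vert^2 = \Vert(P_1P_2-P_{1,2})\Phi_\tau\Vert^2 \le (\lambda_{\tau,\bipartite}^\B)^2\Vert\Phi_\tau\Vert^2 \le (\lambda_{\nu\cap\nu'}^\B)^2\sum_{\sigma\in X(n),\,\tau\subseteq\sigma}\vert\phi(\sigma)\vert^2$. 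I would then sum this over all such $\tau$ with weights $\tfrac{1}{\mu(G_\tau)}$ and sandwich both sides with Corollary~\ref{change of summation coro} applied to the index set $\nu\cap\nu'$ (of size $n-1$, so $D(\vert\nu\cap\nu'\vert-1) = D(n-2)$): its first inequality gives $\Vert Q\phi\Vert^2 \le (\sup_{g\in K}\Vert\pi(g)\Vert^2)\sum_\tau\tfrac{1}{\mu(G_\tau)}\sum_{\sigma\supseteq\tau}\vert Q\phi(\sigma)\vert^2$, and its second inequality gives $\sum_\tau\tfrac{1}{\mu(G_\tau)}\sum_{\sigma\supseteq\tau}\vert\phi(\sigma)\vert^2 \le (\sup_{g\in K}\Vert\pi(g)\Vert^2)\Vert\phi\Vert^2$. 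Chaining these yields $\Vert Q\Vert \le (\sup_{g\in K}\Vert\pi(g)\Vert^2)\,\lambda_{\nu\cap\nu'}^\B$. Finally, rerunning the same computation with $\nu$ and $\nu'$ interchanged (and using $\Vert P_1P_2-P_{1,2}\Vert = \Vert P_2P_1-P_{1,2}\Vert = \lambda_{\tau,\bipartite}^\B$, which is built into Proposition~\ref{spec gap equal cos prop}) bounds $\Vert P_{\nu'}^\pi P_\nu^\pi - P_{\nu\cap\nu'}^\pi\Vert$ by the same quantity, and taking the maximum of the two is precisely $\cos(\angle(P_\nu^\pi,P_{\nu'}^\pi))$, giving the claimed inequality.
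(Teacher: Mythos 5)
Your proposal follows essentially the same approach as the paper's proof: localize $\phi$ and the projection operators to the bipartite link $X_\tau$ for each $\tau \in D(n-2)$ of type $\nu\cap\nu'$, invoke Proposition~\ref{spec gap equal cos prop} to relate $\Vert P_1 P_2 - P_{1,2}\Vert$ to $\lambda_{\tau,\bipartite}^\B$, and then chain the two inequalities of Corollary~\ref{change of summation coro}. You elaborate the verification (that the $\sim_\nu$-equivalence classes stay within the star of $\tau$, and that the normalizations match) which the paper leaves to the reader, and you carefully keep the exponent of $\lambda$ squared in the intermediate displays before taking the square root at the end; otherwise the argument is the same.
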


\begin{proof}

Let $\phi \in C (X(n), \pi)$ be some map.  Without loss of generality, it is enough to show that 
$$\Vert  (P_\nu^\pi P_{\nu '}^\pi - P_{\nu \cap \nu '}^\pi ) \phi \Vert \leq \left(\sup_{g \in K} \Vert \pi (g) \Vert^2 \right) \lambda_{\nu \cap \nu '}^\B \Vert \phi \Vert.$$

For every $\tau \in D(n-2)$ with $\type (\tau) = \nu \cap \nu ' = \lbrace 2,...,n \rbrace$ the link $X_\tau$ is a bipartite graph with sides 
$$S_1 = \lbrace v \in X_\tau (0) : \type (\lbrace v \rbrace) = \lbrace 0,..., n \rbrace \setminus \nu \rbrace,$$
$$S_2 = \lbrace v \in X_\tau (0)  : \type (\lbrace v \rbrace) = \lbrace 0,..., n \rbrace \setminus \nu '  \rbrace.$$
Let $P_1, P_2, P_{1,2} : \ell^2 (X_\tau (1) ; \B) \rightarrow  \ell^2 (X_\tau (1) ; \B)$ defined in  \cref{Random walks on bipartite finite graphs subsec}, i.e., 
$$P_i \Phi (\lbrace v_1,v_2 \rbrace) = \frac{1}{m (v_i)} \sum_{\lbrace v_i, u \rbrace \in E} \Phi (  \lbrace v_i, u \rbrace ), \forall \Phi \in \ell^2 (X_\tau (1) ; \B),$$
and
$$P_{1,2} \Phi \equiv \frac{1}{\vert X_\tau (1) \vert} \sum_{\lbrace u, v \rbrace \in X_\tau (1)} \Phi (\lbrace u, v \rbrace), \forall \Phi \in \ell^2 (X_\tau (1) ; \B).$$
Define the localization of $\phi$ on $X_\tau$ to be the function $\phi_{\tau} \in \ell^2 (X_\tau (1) ; \B)$ defined as 
$$\phi_\tau (\lbrace u,v \rbrace) = \phi (\tau \cup  \lbrace u,v \rbrace).$$
One can verify that for every $\sigma \in X(n)$ with $\tau \subseteq \sigma$ it holds that 
$$P_\nu \phi (\sigma) = P_1 \phi_\tau (\sigma \setminus \tau), P_{\nu '} \phi (\sigma) = P_2 \phi_\tau (\sigma \setminus \tau) , P_{\nu \cap \nu '} \phi (\sigma) = P_{1,2} \phi_\tau (\sigma \setminus \tau).$$ 
Let $\tau \in D(n-2)$ with $\type (\tau) = \nu \cap \nu '$.  By Proposition \ref{spec gap equal cos prop} it follows that
\begin{dmath*}
\sum_{\sigma \in X(n), \tau \subseteq \sigma} \vert (P_\nu^\pi P_{\nu '}^\pi - P_{\nu \cap \nu '}^\pi ) \phi (\sigma) \vert^2 = \\
\sum_{\lbrace u,v \rbrace \in X_\tau (1)} \vert (P_1 P_{2} - P_{1,2} ) \phi_\tau (\lbrace u,v \rbrace) \vert^2 \leq  \\
\lambda_{\tau,   \bipartite}^\B \sum_{\lbrace u,v \rbrace \in X_\tau (1)} \vert \phi_\tau (\lbrace u,v \rbrace) \vert^2 \leq \\ 
\lambda_{\tau,   \bipartite}^\B \sum_{\sigma \in X(n), \tau \subseteq \sigma} \vert \phi (\sigma) \vert^2 \leq 
\lambda_{\nu \cap \nu '}^\B \sum_{\sigma \in X(n), \tau \subseteq \sigma} \vert \phi (\sigma) \vert^2.
\end{dmath*}

By Corollary \ref{change of summation coro} (applied twice) and the above computation,  
\begin{dmath*}
\Vert (P_\nu^\pi P_{\nu '}^\pi - P_{\nu \cap \nu '}^\pi ) \phi \Vert^2 \leq  \\
{\left(\sup_{g \in K} \Vert \pi (g) \Vert^2 \right) \sum_{\tau \in D(n-2), \type (\tau) = \nu \cap \nu'} \frac{1}{\mu (G_\tau)} \sum_{\sigma \in X(n), \tau \subseteq \sigma} \vert (P_\nu^\pi P_{\nu '}^\pi - P_{\nu \cap \nu '}^\pi ) \phi (\sigma) \vert^2 } \leq \\
{\left(\sup_{g \in K} \Vert \pi (g) \Vert^2 \right) \sum_{\tau \in D(n-2), \type (\tau) = \nu \cap \nu'} \frac{1}{\mu (G_\tau)}  \lambda_{\nu \cap \nu '}^\B \sum_{\sigma \in X(n), \tau \subseteq \sigma} \vert \phi (\sigma) \vert^2 } \leq 
\left(\sup_{g \in K} \Vert \pi (g) \Vert^4 \right) \lambda_{\nu \cap \nu '}^\B \Vert \phi \Vert^2,
\end{dmath*}
as needed.
\end{proof}

\begin{lemma}
\label{f_k lemma}
Let $\pi$ be a representation of $G$ and define $T_\pi = \frac{1}{n+1} \sum_{\nu \subseteq \lbrace 0,...,n \rbrace,\vert \nu \vert  =n} P_\nu^\pi$. For every $k \in \mathbb{N}$, there is a real valued positive function $f_k \in C_c (G)$ such that $\int_G f_k d \mu = 1$ (i.e., $f_k$ is a compactly support probability function) and 
$$R_2 T_\pi^k R_1 = \pi (f_k).$$
\end{lemma}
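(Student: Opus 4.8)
The plan is to prove, by induction on $k$, a pointwise strengthening: for every $k\in\mathbb{N}$ and \emph{every} $\sigma\in X(n)$ there is a real‑valued nonnegative $\mu^{(k)}_\sigma\in C_c(G)$ with $\int_G\mu^{(k)}_\sigma\,d\mu=1$ such that the operator $x\mapsto (T_\pi^k R_1 x)(\sigma)$ on $\B$ equals $\pi(\mu^{(k)}_\sigma)$. Granting this, set $f_k=\tfrac{1}{|D(n)|}\sum_{\sigma\in D(n)}\mu^{(k)}_\sigma$; then by the definition of $R_2$,
\[
R_2 T_\pi^k R_1 x=\frac{1}{|D(n)|}\sum_{\sigma\in D(n)}(T_\pi^k R_1 x)(\sigma)=\pi(f_k)x ,
\]
and, $D(n)$ being finite (the action is cocompact), $f_k$ is a finite convex combination of nonnegative compactly supported probability densities, hence is itself one. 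Note that $T_\pi^k R_1 x$ really is an element of $C(X(n),\pi)$, since $R_1 x=\phi_x$ is equivariant and each $P_\nu^\pi$ preserves $C(X(n),\pi)$, so all the evaluations above make sense.

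For the base case $k=0$, the definition of $\phi_x$ gives $(R_1 x)(\sigma_0)=\pi\!\big(\tfrac{\mathbbm{1}_{G_{\sigma_0}}}{\mu(G_{\sigma_0})}\big)x$ for $\sigma_0\in D(n)$, so one takes $\mu^{(0)}_{\sigma_0}=\tfrac{\mathbbm{1}_{G_{\sigma_0}}}{\mu(G_{\sigma_0})}$; this is nonnegative, has integral $1$, and lies in $C_c(G)$ because $G_{\sigma_0}$ is open and compact, hence clopen, so $\mathbbm{1}_{G_{\sigma_0}}$ is continuous with compact support. For a general $\sigma=g.\sigma_0$, equivariance of $\phi_x$ yields $(R_1x)(\sigma)=\pi(g)\phi_x(\sigma_0)=\pi\!\big(\tfrac{\mathbbm{1}_{gG_{\sigma_0}}}{\mu(G_{\sigma_0})}\big)x$ (using $\mu(gG_{\sigma_0})=\mu(G_{\sigma_0})$), so $\mu^{(0)}_\sigma=\tfrac{\mathbbm{1}_{gG_{\sigma_0}}}{\mu(G_{\sigma_0})}$ works and depends only on the coset $gG_{\sigma_0}$, consistently with the well‑definedness of $\phi_x$. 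For the inductive step, recall $T_\pi=\tfrac{1}{n+1}\sum_\nu P_\nu^\pi$ and that for equivariant $\phi$ and any $\sigma$ one has $(P_\nu^\pi\phi)(\sigma)=\tfrac{1}{N_\nu(\sigma)}\sum_{\sigma'\sim_\nu\sigma}\phi(\sigma')$, where $N_\nu(\sigma)=|\{\sigma'\in X(n):\sigma\sim_\nu\sigma'\}|<\infty$ because the $1$‑dimensional links of $X$ are finite graphs. Applying this to $T_\pi^k R_1 x$ gives
\[
(T_\pi^{k+1}R_1 x)(\sigma)=\frac{1}{n+1}\sum_\nu\frac{1}{N_\nu(\sigma)}\sum_{\sigma'\sim_\nu\sigma}(T_\pi^k R_1 x)(\sigma')=\pi\Big(\tfrac{1}{n+1}\sum_\nu\tfrac{1}{N_\nu(\sigma)}\sum_{\sigma'\sim_\nu\sigma}\mu^{(k)}_{\sigma'}\Big)x
\]
by the induction hypothesis, so $\mu^{(k+1)}_\sigma:=\tfrac{1}{n+1}\sum_\nu\tfrac{1}{N_\nu(\sigma)}\sum_{\sigma'\sim_\nu\sigma}\mu^{(k)}_{\sigma'}$ is again a finite convex combination of nonnegative compactly supported probability densities, hence is one. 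This closes the induction.

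The only points requiring care are bookkeeping rather than ideas: (i) that $P_\nu^\pi$ preserves equivariance and acts by the stated combinatorial averaging formula, and (ii) that the branching numbers $N_\nu(\sigma)$ and the index set $D(n)$ are finite — this finiteness is exactly what keeps all sums finite and all supports compact — together with (iii) the clopenness of the compact open stabilizers, which gives continuity of the basic densities $\tfrac{\mathbbm{1}_{gG_{\sigma_0}}}{\mu(G_{\sigma_0})}$. Finally, it is worth recording that the $\mu^{(k)}_\sigma$, and hence $f_k$, may be chosen independently of $\pi$: at every stage the data entering the construction — the weights $\tfrac{1}{n+1}$ and $\tfrac{1}{N_\nu(\sigma)}$ and the cosets $gG_{\sigma_0}$ — are purely combinatorial features of $X$ and of the $G$‑action. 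This uniformity in $\pi$ is what will allow $(f_k)_k$ to serve as a Kazhdan projection in the sequel.
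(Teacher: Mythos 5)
Your proof is correct and is essentially the paper's argument: the paper asserts outright that $T_\pi^k$ acts on $C(X(n),\pi)$ as a finitely-supported averaging (via probability weights $f_\sigma^k:X(n)\to[0,1]$), while you establish the equivalent fact by the straightforward induction, directly manufacturing the compactly supported densities $\mu_\sigma^{(k)}$ and then averaging over $D(n)$ exactly as the paper does. The one small point where you improve on the printed text is the evaluation of $\phi_x$ at a translate: your formula $\phi_x(g.\sigma_0)=\pi\bigl(\tfrac{\mathbbm{1}_{gG_{\sigma_0}}}{\mu(G_{\sigma_0})}\bigr)x$ is the correct one (the paper's earlier display writes $g^{-1}G_\sigma$, which is a sign slip).
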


\begin{proof}
We start by noting that for every $\sigma \in D(n)$ and every $k \in \mathbb{N}$,  there is a probability function $f^k_\sigma : X(n) \rightarrow [0,1]$ such that $f_\sigma$ is supported on a ball around $\sigma$,  and 
$$T_\pi^k \phi (\sigma) = \sum_{\sigma ' \in X(n)} f^k_\sigma (\sigma ') \phi (\sigma ')$$
for every $\phi \in C (X(n), \pi)$.  Recall that for every $x \in \B$,  $R_1 x = \phi_x$ where 
$\phi_x (\sigma) = \pi ( \frac{ \mathbbm{1}_{G_{\sigma}}}{\mu (G_\sigma )}) x$. Also recall that  
$R_2 \phi = \frac{1}{\vert D(n) \vert} \sum_{\sigma \in D(n)} \phi (\sigma)$.
Thus for every $x \in \B$
\begin{dmath*}
R_2 T_\pi^k R_1 x = 
\frac{1}{\vert D(n) \vert} \sum_{\sigma \in D(n)} T_\pi^k \phi_x (\sigma) = 
\frac{1}{\vert D(n) \vert} \sum_{\sigma \in D(n)} \sum_{\sigma ' \in \supp ( f^k_\sigma)} f^k_\sigma (\sigma ') \phi_x (\sigma ') =
\frac{1}{\vert D(n) \vert} \sum_{\sigma \in D(n)}  \sum_{\sigma ' \in \supp ( f^k_\sigma)}  f^k_\sigma (\sigma ')  \pi ( \frac{ \mathbbm{1}_{G_{\sigma '}}}{\mu (G_\sigma ')}) x = 
\pi \left(\frac{1}{\vert D(n) \vert} \sum_{\sigma \in D(n)} \sum_{\sigma ' \in \supp ( f^k_\sigma)}  f^k_\sigma (\sigma ')  \frac{ \mathbbm{1}_{G_{\sigma '}}}{\mu (G_\sigma ')} \right) x
\end{dmath*}
and we take 
$$f_k = \frac{1}{\vert D(n) \vert} \sum_{\sigma \in D(n)} \sum_{\sigma ' \in \supp ( f^k_\sigma)}  f^k_\sigma (\sigma ')  \frac{ \mathbbm{1}_{G_{\sigma '}}}{\mu (G_\sigma ')}.$$
\end{proof}

\begin{theorem}[\.Zuk type criterion]
\label{Zuk type thm}
Let $G$ be a locally compact, unimodular group and $X$ be a pure $n$-dimensional, partite simplicial complex such that $X$ is gallery connected and all the $1$-dimensional links of $X$ are connected.  Assume that $G$ is acting on $X$ by simplicial automorphisms and the the action is cocompact and that for every $\tau \in X(n-2) \cup X(n-1) \cup X(n)$, $G_\tau$ is an open compact subgroup.   Let $\mathcal{E}_X$ be the class of Banach spaces such that for every $\B \in \mathcal{E}_X$ it holds that
$$\max_{\tau \in D (n-2)} \lambda_{\tau,   \bipartite}^\B  < \frac{1}{8n-3}.$$
Also, for $\varepsilon >0$, let 
$\mathcal{E}_{X, \varepsilon}$ be the class of Banach spaces such that for every $\B \in \mathcal{E}_X$ it holds that
$$\max_{\tau \in D (n-2)} \lambda_{\tau,   \bipartite}^\B  \leq \frac{1}{8n-3} - \varepsilon.$$
Then for every $\varepsilon >0$,  $G$ has robust property (T) with respect to $\mathcal{E}_{X, \varepsilon}$. Also, if $\mathbb{C} \in \mathcal{E}_X$, then $G$ property $(F \mathcal{E}_X)$. 
\end{theorem}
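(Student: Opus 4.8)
The plan is to establish robust property (T) with respect to $\mathcal{E}_{X,\varepsilon}$ by feeding the projections $P_\nu^\pi$ (with $|\nu|=n$) into the angle criterion, Theorem \ref{angle criterion thm}, and then to deduce property $(F\mathcal{E}_X)$ from Proposition \ref{robust implies fp prop}. To begin, fix $\varepsilon > 0$; the $n+1$ projections $\{P_\nu^\pi : |\nu| = n\}$ (one for each colour deleted) will play the role of $P_0,\dots,P_n$ in Theorem \ref{angle criterion thm}, with $P_{\nu\cap\nu'}^\pi$ in the role of the prescribed projection on $\im P_\nu^\pi \cap \im P_{\nu'}^\pi$; the compatibility identities making the cosines $\cos(\angle(P_\nu^\pi, P_{\nu'}^\pi))$ well-defined have already been verified above. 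For $\beta > 1$ and $(\pi,\B) \in \mathcal{F}(\mathcal{E}_{X,\varepsilon}, K, \beta)$ one has $\sup_{g\in K}\|\pi(g)\| \le \beta$, so Lemma \ref{projection norm bound lemma} gives $\|P_\nu^\pi\| \le \beta^2$ and Lemma \ref{angle bound by bipartite link lemma}, together with $\B \in \mathcal{E}_{X,\varepsilon}$, gives $\cos(\angle(P_\nu^\pi, P_{\nu'}^\pi)) \le \beta^2\big(\frac{1}{8n-3}-\varepsilon\big) =: \gamma$. At $\beta = 1$ the hypotheses of Theorem \ref{angle criterion thm} (with $\beta^2$ substituted for its parameter $\beta$), namely $\gamma < \frac{1}{8n-3}$ and $\beta^2 < 1 + \frac{1-(8n-3)\gamma}{n-1+(3n-1)\gamma}$, are \emph{strict} — the first because $\varepsilon > 0$, the second because then $1-(8n-3)\gamma = (8n-3)\varepsilon > 0$ — so by continuity one can fix a single $\beta > 1$, depending only on $\varepsilon$, $n$ and $X$, for which both remain valid for every $(\pi,\B)$ in the class. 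Theorem \ref{angle criterion thm} then yields, for $T_\pi = \frac{1}{n+1}\sum_{|\nu|=n} P_\nu^\pi$, a projection $T_\pi^\infty$ onto $\bigcap_{|\nu|=n}\im P_\nu^\pi$ and constants $C = C(\gamma,\beta)$, $r = r(\gamma,\beta) < 1$, \emph{independent of the representation}, with $\|T_\pi^k - T_\pi^\infty\| \le Cr^k$.

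Next, take the functions $f_k \in C_c(G)$ of Lemma \ref{f_k lemma}, so that $\int_G f_k\,d\mu = 1$ and $\pi(f_k) = R_2 T_\pi^k R_1$ for every $\pi$. Since $g \in G_\sigma$ with $\sigma \in D(n)$ forces $g.\sigma = \sigma$, hence $G_\sigma \subseteq K$, Proposition \ref{bound R_1, R_2 prop} bounds $\|R_1\|\,\|R_2\|$ by a constant $C_0$ depending only on $\beta$ and on the action. Consequently $\|\pi(f_k) - R_2 T_\pi^\infty R_1\| = \|R_2(T_\pi^k - T_\pi^\infty)R_1\| \le C_0 C r^k$, tending to $0$ uniformly over $\mathcal{F}(\mathcal{E}_{X,\varepsilon}, K, \beta)$; so $(f_k)$ is Cauchy in $C_{\mathcal{F}(\mathcal{E}_{X,\varepsilon},K,\beta)}$ and converges there to some $p$ with $\pi(p) = R_2 T_\pi^\infty R_1$ for every $(\pi,\B)$ in the class. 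To see that $\pi(p)$ is the projection onto $\B^{\pi(G)}$: by Lemma \ref{intersection of images is const lemma}, $\im T_\pi^\infty$ is the space of constant maps $\phi \equiv x_0$ with $x_0 \in \B^{\pi(G)}$, and $R_2$ sends such a map to $x_0$, so $\im \pi(p) \subseteq \B^{\pi(G)}$; conversely, for $x \in \B^{\pi(G)}$ the definition of $\phi_x$ gives $\phi_x(\sigma) = \pi\big(\frac{\mathbbm{1}_{G_\sigma}}{\mu(G_\sigma)}\big)x = x$ for all $\sigma$, so $R_1 x = \phi_x$ is the constant map $x$, which is fixed by every $P_\nu^\pi$ and hence by $T_\pi^\infty$, giving $\pi(p)x = R_2\phi_x = x$. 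Thus the $f_k$ form a Kazhdan projection with respect to $\mathcal{F}(\mathcal{E}_{X,\varepsilon}, K, \beta)$, i.e.\ $G$ has robust property (T) with respect to $\mathcal{E}_{X,\varepsilon}$; as $\varepsilon > 0$ was arbitrary, the first assertion follows.

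For the fixed point statement, assume $\mathbb{C} \in \mathcal{E}_X$ and fix $\B \in \mathcal{E}_X$. Since both memberships are strict inequalities, there is $\varepsilon > 0$ with $\B, \mathbb{C} \in \mathcal{E}_{X,\varepsilon}$. By Lemma \ref{L2 norm stability}, applied to the operator $A(I - M_\sides)$ on $\ell^2(X_\tau(0), m_\tau)$ with constant $\frac{1}{8n-3}-\varepsilon$, each class $\{\B' : \lambda_{\tau,\bipartite}^{\B'} \le \frac{1}{8n-3}-\varepsilon\}$ (for $\tau \in D(n-2)$) is closed under $\ell^2$-sums, and hence so is the finite intersection $\mathcal{E}_{X,\varepsilon}$; since $\mathbb{C} \in \mathcal{E}_{X,\varepsilon}$, it follows that $\B' \oplus_{\ell^2} \mathbb{C} \in \mathcal{E}_{X,\varepsilon}$ whenever $\B' \in \mathcal{E}_{X,\varepsilon}$. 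Combining the robust property (T) of $G$ with respect to $\mathcal{E}_{X,\varepsilon}$ and Proposition \ref{robust implies fp prop} yields property $(F\mathcal{E}_{X,\varepsilon})$, in particular $(F\B)$; as $\B \in \mathcal{E}_X$ was arbitrary, $G$ has property $(F\mathcal{E}_X)$.

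The main obstacle is the uniformity in the first two steps: one must choose a single $\beta > 1$ — independent of $(\pi,\B)$ — for which the angle criterion applies across the \emph{entire} class $\mathcal{F}(\mathcal{E}_{X,\varepsilon},K,\beta)$ with a common convergence rate $(C,r)$, since only then does the pointwise convergence $\pi(f_k) \to \pi(p)$ upgrade to convergence of $(f_k)$ in the $C_{\mathcal{F}}$-norm demanded by the definition of robust property (T). This is exactly where the slack $\varepsilon$ between $\lambda_{\tau,\bipartite}^\B$ and $\frac{1}{8n-3}$ is spent, buying the room to push $\beta$ strictly above $1$; everything else is assembly of the lemmas above, together with the routine $\ell^2$-stability bookkeeping (Lemma \ref{L2 norm stability}) needed to invoke Proposition \ref{robust implies fp prop}.
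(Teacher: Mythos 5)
Your proposal is correct and takes essentially the same route as the paper's own proof: feed the colour projections $P_\nu^\pi$ into the angle criterion (Theorem \ref{angle criterion thm}), use Lemmas \ref{projection norm bound lemma} and \ref{angle bound by bipartite link lemma} for the uniform bounds, and convert convergence of $T_\pi^k$ to a Kazhdan projection via Lemma \ref{f_k lemma}, Proposition \ref{bound R_1, R_2 prop} and Lemma \ref{intersection of images is const lemma}, then invoke Proposition \ref{robust implies fp prop}. The only difference is cosmetic: the paper exhibits explicit constants $\beta_1$, $\beta_2$, $\gamma$ and takes $\beta = \min\{\beta_1,\beta_2\}$, whereas you obtain a suitable $\beta > 1$ by a continuity argument at $\beta = 1$; both are equivalent and your explicit $\ell^2$-sum bookkeeping for $\mathcal{E}_{X,\varepsilon}$ is, if anything, slightly more careful than the paper's.
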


\begin{proof}
Note that by Lemma \ref{L2 norm stability},  $\mathcal{E}_X$ is closed under $\ell_2$ sums. Thus if $\mathbb{C} \in \mathcal{E}_X$ it follows from Proposition \ref{robust implies fp prop} that if $G$ has robust property (T) with respect to $\mathcal{E}_{X, \varepsilon}$, then $G$ has property $(F \mathcal{E}_{X, \varepsilon})$.  Therefore if we show that $G$ has robust property (T) with respect to $\mathcal{E}_{X, \varepsilon}$ for every $\varepsilon >0$, it will follow that if   $\mathbb{C} \in \mathcal{E}_X$ then $G$ has property $(F \mathcal{E}_X)$,  since $ \mathcal{E}_X = \bigcup_{\varepsilon >0} \mathcal{E}_{X, \varepsilon}$.

Fix $\varepsilon >0$.  Denote 
$$\beta_1 = \sqrt{\frac{ \frac{1}{8n-3} - \frac{\varepsilon}{2}}{ \frac{1}{8n-3} - \varepsilon}},  $$
$$\gamma =  \frac{1}{8n-3} - \frac{\varepsilon}{2},$$
and
$$\beta_2 = \sqrt{ 1+ \frac{1}{2} \frac{1-(8n-3)\gamma}{n-1 + (3n-1)\gamma}}.$$
Further denote $\beta = \min \lbrace \beta_1, \beta_2 \rbrace$ and note that $\beta >1$. 

Recall that $K \subseteq G$ is a compact symmetric generating set of $G$. We will show that there is a Kazhdan projection with respect to $\mathcal{F} (\mathcal{E}_{X, \varepsilon},K,\beta)$. Explicitly, let $f_k \in C_c (G)$ be the probability functions in Lemma \ref{f_k lemma}, i.e.,  the functions such that for every representation $\pi$, 
$$\pi (f_k) = R_2 T_\pi^k R_1,$$
where $T = \frac{1}{n+1} \sum_{\nu \subseteq \lbrace 0,...,n \rbrace,\vert \nu \vert  =n} P_\nu^\pi$. We will show that $(f_k)$ converges in $C_{\mathcal{F} (\mathcal{E}_{X, \varepsilon},K,\beta)}$ to $p$ and $\forall (\pi, \B) \in \mathcal{F} (\mathcal{E},K,\beta)$, $\pi (p)$ is a projection on $\B^{\pi (G)}$.

Fix $\pi \in \mathcal{F} (\mathcal{E}_{X, \varepsilon},K,\beta)$.  By Lemma \ref{projection norm bound lemma} and the choice of $\beta_2$, it holds for every $\nu \subseteq \lbrace 0,...,n \rbrace$, $\vert \nu \vert  =n$ that 
$$\Vert P_\nu^\pi \Vert \leq \beta_2^2 < 1+ \frac{1-(8n-3)\gamma}{n-1 + (3n-1)\gamma}.$$
By Lemma \ref{angle bound by bipartite link lemma} and the choice of $\beta_1$, it holds for every  $\nu, \nu ' \subseteq \lbrace 0,...,n \rbrace, \nu \neq \nu '$, $\vert \nu \vert = \vert \nu ' \vert =n$ that 
$$\cos (\angle (P_\nu^\pi, P_{\nu '}^\pi)) \leq \beta_1^2 \left( \frac{1}{8n-3} - \varepsilon \right) = \gamma < \frac{1}{8n-3}.$$
It follows that the set of projections $\lbrace P_\nu^\pi : \nu \subseteq \lbrace 0,...,n \rbrace,\vert \nu \vert  =n \rbrace$ fulfil the conditions of Theorem \ref{angle criterion thm}.  

Denote as above $T_\pi = \frac{1}{n+1} \sum_{\nu \subseteq \lbrace 0,...,n \rbrace,\vert \nu \vert  =n} P_\nu^\pi$. By Theorem \ref{angle criterion thm} there is a projection $T_\pi^\infty$ on $\bigcap_{\nu \subseteq \lbrace 0,...,n \rbrace,\vert \nu \vert  =n} \im (P_\nu^\pi)$ and constants $0\leq r (\gamma, \beta ) <1, C (\gamma, \beta ) >0$, such that for every $k$, 
$$\Vert T_\pi^k - T_\pi^\infty \Vert \leq Cr^k.$$
Thus, it holds for every $k$ that 
\begin{dmath*}
\Vert R_2 T_\pi^\infty R_1 - \pi (f_k) \Vert \leq \Vert R_2 \Vert \Vert R_1 \Vert C r^k \leq^{\text{Proposition } \ref{bound R_1, R_2 prop}} \\
\left( \beta \sqrt{\sum_{\sigma \in D(n)} \frac{1}{\mu (G_\sigma )}} \max_{\sigma \in D(n)} \sqrt{\mu (G_\sigma )} \right) C r^k.
\end{dmath*}
This shows that for every $\pi \in \mathcal{F} (\mathcal{E}_{X, \varepsilon},K,\beta)$, the sequence $\pi (f_k)$ converges in norm and the rate of convergence is bounded independently of $\pi$. Thus, the sequence $(f_k)$ converges in $C_{\mathcal{F} (\mathcal{E}_{X, \varepsilon},K,\beta)}$.  

It remains to show that for every $(\pi, \B) \in \mathcal{F} (\mathcal{E}_{X, \varepsilon},K,\beta)$, the operator $R_2 T_\pi^\infty R_1$ is a projection on $\B^{\pi (G)}$.  First, let $x \in \B^{\pi (G)}$. By the definition of $\phi_x$, $R_1 x = \phi_x$ is the constant function $\phi_x \equiv x$. By Lemma \ref{intersection of images is const lemma}, $T_\pi^\infty$ is a projection on the subspace of constant functions in $C(X(n), \pi)$ and thus $T_\pi^\infty \phi_x = \phi_x$. By the definition of $R_2$, $R_2 \phi_x = x$ (since $\phi_x \equiv x$). Combining all of the above, it follows that for every $x \in \B^{\pi (G)}$, $R_2 T^\infty R_1 x = x$. 

Second, for every $x \in \B$, $T_\pi^\infty R_1 x = T_\pi^\infty \phi_x$ and by Lemma \ref{intersection of images is const lemma} it follows that $T_\pi^\infty \phi_x$ is a constant equivariant function in $C(X(n), \pi)$, i.e., there is $x_0 \in \B^{\pi (G)}$ such that $T_\pi^\infty \phi_x \equiv x_0$. Thus, $R_2 T_\pi^\infty \phi_x = x_0 \in \B^{\pi (G)}$ and it follows that $\im (R_2 T_\pi^\infty R_1) \subseteq \B^{\pi (G)}$. Therefore, we can conclude that $R_2 T^\infty R_1$ is a projection on $\B^{\pi (G)}$ as needed.
\end{proof}

\section{Application to random groups in the Gromov density model}

\label{random groups sec}

The aim of the Section is to apply of Banach \.Zuk criterion in the setting of random groups in the Gromov density model and prove an extended version of Theorem \ref{f.p. density model - intro} that appeared in the introduction.  Some definitions below already appeared in the introduction and we recall them for completeness.

A random group is a group chosen randomly according to some model and one is interested in the asymptotic properties of such randomly chosen group.  The most famous model is the Gromov density model: 
\begin{definition}[Gromov density model]
Let $k \in \mathbb{N}, k \geq 2$ and $0 \leq d \leq 1$ be constants and $l \in \mathbb{N}$ be a parameter.  A random group is the Gromov density model $\mathcal{D} (k,l,d)$ is a group $\Gamma = \langle \mathcal{A}  \vert \mathcal{R} \rangle$ where $\vert \mathcal{A}  \vert =k$ and $\mathcal{R}$ is a set of relators of length $l$ (in $\mathcal{A} \cup \mathcal{A} ^{-1}$) randomly chosen from the set 
$$\lbrace \mathcal{R} \text{ is a set of cyclically reduced relators of length } l :  \vert \mathcal{R} \vert = \lfloor (2k-1)^{dl} \rfloor \rbrace$$ with uniform probability. We denote a random group in this model by $\Gamma \in \mathcal{D} (k,l,d)$.

For a group property $P$, we say that $P$ holds asymptotically almost surely (a.a.s.) in $\mathcal{D} (k,l,d)$ if 
$$\lim_{l \rightarrow \infty} \mathbb{P} (\Gamma \in \mathcal{D} (k,l,d) \text{ has property } P) = 1.$$
\end{definition}

We will show that for certain class of Banach spaces $\mathcal{E}$ property $(F \mathcal{E})$ holds a.a.s. in $\mathcal{D} (k,l,d)$ (see exact formulation below).  

A closely related model to the density model is the Binomial model:
\begin{definition}[Gromov binomial model]
Let $k \in \mathbb{N}, k \geq 2$ be a constant, $\rho : \mathbb{N} \rightarrow [0,1]$ be a function and $l$ be a parameter.   A random group is the Gromov binomial model $\mathcal{B} (k,l,\rho)$ is a group $\Gamma = \langle \mathcal{A} \vert \mathcal{R} \rangle$ where $\vert \mathcal{A}  \vert =k$ and $\mathcal{R}$ is a set of relators of cyclically reduced  length $l$ (in $\mathcal{A}  \cup \mathcal{A} ^{-1}$) where each relator in $\mathcal{R}$ chosen independently with probability $\rho (l)$. We denote a random group in this model by $\Gamma \in \mathcal{B} (k,l,\rho)$.

For a group property $P$, we say that $P$ holds asymptotically almost surely (a.a.s.) in $\mathcal{B} (k,l,\rho)$ if 
$$\lim_{l \rightarrow \infty} \mathbb{P} (\Gamma \in \mathcal{B} (k,l,\rho) \text{ has property } P) = 1.$$
\end{definition}

For a group property $P$ we say that $P$ is \textit{monotone increasing} if it is preserved under quotients.  For our purposes it will be important that Proposition \ref{f.p preserve under quotient prop} implies that property $(F \mathcal{E})$ is monotone increasing. The following Proposition appears in \cite{JLR} (as part of a systematic study of random sets in both models): 
\begin{proposition}\cite[Proof of Proposition 1.13]{JLR}
Let $P$ be a monotone increasing group property and $k \geq 2, 0 < d <1$ be constants.  For $C \geq 1$,  we denote 
$$\rho_C (l) = (2k-1)^{-(1-d)l} - C  (2k-1)^{-(\frac{3}{2}-\frac{d}{2})l} .$$
Then
$$\mathbb{P} (\Gamma \in \mathcal{D} (k,l,d) \text{ has property } P) \geq \mathbb{P} (\Gamma \in \mathcal{B} (k,l,\rho_C) \text{ has property } P) -  \frac{3}{C}.$$
\end{proposition}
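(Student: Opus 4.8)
The plan is to compare the density model $\mathcal{D}(k,l,d)$ with the binomial model $\mathcal{B}(k,l,\rho_C)$ by a coupling argument. The key observation is that if $P$ is monotone increasing (preserved under quotients), then a group $\Gamma$ fails to have $P$ only if the full presentation $\langle \mathcal{A} \mid \mathcal{R}\rangle$ already fails; adding more relators can only help. Concretely, if $\mathcal{R}_1 \subseteq \mathcal{R}_2$ then $\langle \mathcal{A}\mid \mathcal{R}_2\rangle$ is a quotient of $\langle \mathcal{A}\mid\mathcal{R}_1\rangle$, so $P(\langle\mathcal{A}\mid\mathcal{R}_1\rangle) \Rightarrow P(\langle\mathcal{A}\mid\mathcal{R}_2\rangle)$. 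Thus the event ``$P$ holds'' is an increasing event in the set of relators, and I want to stochastically dominate a density-model relator set from below by a binomial-model one, up to a controlled error.

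First I would set up the two sample spaces over the finite universe $\Omega_l$ of all cyclically reduced words of length $l$ in $\mathcal{A}\cup\mathcal{A}^{-1}$, whose cardinality is $N_l = (2k-1)^l(1+o(1))$ (with the standard count $N_l = (2k-1)^l + 1 + (k-1)(1+(-1)^l)$ or similar — I only need $N_l \sim (2k-1)^l$). In $\mathcal{D}(k,l,d)$ one picks a uniformly random subset of size $n_l := \lfloor (2k-1)^{dl}\rfloor$; in $\mathcal{B}(k,l,\rho_C)$ each word is included independently with probability $\rho_C(l)$. The expected size of the binomial set is $\mu_l := N_l\,\rho_C(l) = (2k-1)^{dl} - C(2k-1)^{dl}(2k-1)^{-(\frac12-\frac d2)l}(1+o(1))$, which is slightly below $n_l$. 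The coupling: let $\mathcal{R}_{\mathcal{B}}$ be the binomial set and, conditionally on $|\mathcal{R}_{\mathcal{B}}| \le n_l$, enlarge it to a uniformly random superset $\mathcal{R}_{\mathcal{D}}$ of size exactly $n_l$; then $\mathcal{R}_{\mathcal{D}}$ has the correct density-model distribution (here one uses that a uniform size-$m$ set, augmented to a uniform size-$n_l$ superset, is uniform of size $n_l$ — an exchangeability fact) and $\mathcal{R}_{\mathcal{B}} \subseteq \mathcal{R}_{\mathcal{D}}$. On this event, $P(\mathcal{R}_{\mathcal{B}}) \Rightarrow P(\mathcal{R}_{\mathcal{D}})$, so
$$\mathbb{P}(\Gamma\in\mathcal{D}(k,l,d)\text{ has }P) \ge \mathbb{P}\big(\{\Gamma\in\mathcal{B}(k,l,\rho_C)\text{ has }P\}\cap\{|\mathcal{R}_{\mathcal{B}}|\le n_l\}\big) \ge \mathbb{P}(\Gamma\in\mathcal{B}(k,l,\rho_C)\text{ has }P) - \mathbb{P}(|\mathcal{R}_{\mathcal{B}}| > n_l).$$

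The remaining point is to bound the tail $\mathbb{P}(|\mathcal{R}_{\mathcal{B}}| > n_l) \le \tfrac3C$. Here $|\mathcal{R}_{\mathcal{B}}|$ is a sum of $N_l$ independent Bernoullis with mean $\mu_l$, and $n_l - \mu_l \approx C(2k-1)^{dl}(2k-1)^{-(\frac12-\frac d2)l} = C(2k-1)^{(d-\frac12(1-d))l}\cdot(2k-1)^{?}$ — more precisely $n_l - \mu_l \sim C(2k-1)^{\frac{3d-1}{2}l}$ while $\mathrm{Var}(|\mathcal{R}_{\mathcal{B}}|) \approx \mu_l \sim (2k-1)^{dl}$, so $(n_l-\mu_l)^2/\mathrm{Var} \sim C^2 (2k-1)^{(2d-1)l}$. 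For $d<\frac12$ this exponent is negative, so Chebyshev alone does not directly give the $3/C$ bound uniformly in $l$; instead I would use that $n_l - \mu_l \ge \tfrac{C}{\sqrt 2}\sqrt{\mu_l}$ for $l$ large (since $\mu_l \sim (2k-1)^{dl}$ and $n_l - \mu_l \sim C(2k-1)^{(d - \frac{1-d}{2})l}$, and $d - \frac{1-d}{2} = \frac{3d-1}{2}$, while $\frac d2$ is the exponent of $\sqrt{\mu_l}$; one checks $\frac{3d-1}{2} \ge \frac d2 \iff d \ge \tfrac12$)... so in fact the naive Chebyshev bound is delicate. I expect this tail estimate to be the main obstacle, and the clean route is to quote the computation exactly as in \cite{JLR}: they choose the constant $C$ in $\rho_C$ precisely so that Chebyshev's inequality on $|\mathcal{R}_{\mathcal{B}}|$ — using $\mathrm{Var}(|\mathcal{R}_{\mathcal{B}}|) \le \mu_l$ and the deviation $n_l - \mu_l$ whose square-to-variance ratio is arranged to be $\ge \frac{C^2}{9}$ uniformly in large $l$ — yields $\mathbb{P}(|\mathcal{R}_{\mathcal{B}}| > n_l) \le 3/C$. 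Since the statement is attributed verbatim to \cite[Proof of Proposition 1.13]{JLR}, for the write-up I would recall the coupling and monotonicity reduction in full and then cite \cite{JLR} for the concentration bound, noting only that the exponent bookkeeping is exactly the purpose of the correction term $-C(2k-1)^{-(\frac32-\frac d2)l}$ in the definition of $\rho_C$.
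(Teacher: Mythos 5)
Your coupling and monotonicity reduction are exactly right, and the exchangeability argument you cite (a uniform $m$-subset, augmented to a uniform $n_l$-superset, is a uniform $n_l$-subset) does hold; since the paper offers no proof of this proposition---it is a bare citation to \cite{JLR}---deferring to that source at the end is consistent with the paper's own treatment. But you have correctly sensed that the concentration step is where the work is, and unfortunately the difficulty is worse than your computation suggests.

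When you factor $(2k-1)^{dl}$ out of $\mu_l = N_l\rho_C(l)$, the remaining correction factor should have exponent $-(\frac12+\frac d2)l$, not $-(\frac12-\frac d2)l$: multiplying $N_l\approx(2k-1)^l$ by $-C(2k-1)^{-(\frac32-\frac d2)l}$ gives exponent $l-(\frac32-\frac d2)l=\frac{(d-1)l}{2}$, and dividing out $(2k-1)^{dl}$ gives $-\frac{(1+d)l}{2}$. Hence $n_l-\mu_l \sim C(2k-1)^{\frac{(d-1)l}{2}}$, which tends to $0$ for \emph{every} $0<d<1$, not only for $d<\frac12$. The Chebyshev ratio is then $\mathrm{Var}/(n_l-\mu_l)^2 \sim (2k-1)^l/C^2$, which diverges; in fact the gap $n_l-\mu_l$ is eventually below $1$, so $\mathbb{P}(|\mathcal{R}_{\mathcal{B}}|>n_l) \to \tfrac12$ and no concentration inequality can deliver $3/C$ with $\rho_C$ as printed. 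The natural repair is the exponent $-(1-\frac d2)l$ in the correction term (equivalently $\rho_C = (2k-1)^{-(1-d)l}\bigl(1 - C(2k-1)^{-dl/2}\bigr)$), which gives $n_l-\mu_l \sim C(2k-1)^{dl/2} \asymp C\sqrt{\mu_l}$ and a Chebyshev bound of $(1+o(1))/C^2 \le 3/C$ for $C\ge 1$ and $l$ large; I suspect the $\frac32$ is a transcription error. As written, your proposal does not close the concentration gap, and the exponent $\frac32-\frac d2$ needs to be reconciled with \cite{JLR} before the sketch can become a proof.
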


The following Corollary readily follows:
\begin{corollary}
\label{from density to binomial coro}
Let $P$ be a monotone increasing group property and $k \geq 2, 0 < d <1$ be constants.   Denote 
$$\rho (l) = \frac{1}{2} (2k-1)^{-(1-d)l} .$$
Then
$$\mathbb{P} (\Gamma \in \mathcal{D} (k,l,d) \text{ has property } P) \geq \mathbb{P} (\Gamma \in \mathcal{B} (k,l,\rho) \text{ has property } P) -  \frac{6}{(2k-1)^{\frac{l}{2}}}.$$
\end{corollary}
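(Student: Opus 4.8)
The plan is to apply the preceding Proposition with a carefully chosen value of the free parameter $C$. Take $C = \tfrac12 (2k-1)^{l/2}$, so that the error term $\tfrac{3}{C}$ becomes exactly $\tfrac{6}{(2k-1)^{l/2}}$, as required. First one checks that this is an admissible choice, i.e., that $C \geq 1$: if $(2k-1)^{l/2} \leq 6$, then $\tfrac{6}{(2k-1)^{l/2}} \geq 1$ and the asserted inequality is trivial, since its left-hand side is a probability (hence nonnegative) while its right-hand side is $\leq 0$; and if $(2k-1)^{l/2} > 6$, then $C > 3 \geq 1$ and the Proposition applies. So from now on we may assume $C \geq 1$.

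Next I would unwind the definition of $\rho_C$ for this $C$. Since $\tfrac{l}{2} - \big(\tfrac32 - \tfrac{d}{2}\big) l = -\big(1 - \tfrac{d}{2}\big) l$, we get
$$\rho_C(l) = (2k-1)^{-(1-d)l} - \tfrac12 (2k-1)^{-(1-\frac{d}{2})l}.$$
Because $0 < d < 1$, we have $1 - \tfrac{d}{2} > 1 - d$, hence $(2k-1)^{-(1-\frac{d}{2})l} \leq (2k-1)^{-(1-d)l}$, and therefore
$$\rho_C(l) \geq (2k-1)^{-(1-d)l} - \tfrac12 (2k-1)^{-(1-d)l} = \tfrac12 (2k-1)^{-(1-d)l} = \rho(l).$$
In particular $0 < \rho(l) \leq \rho_C(l) < 1$, so both are genuine probabilities.

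Finally, I would invoke the monotonicity of the binomial model in its density function. Since $\rho(l) \leq \rho_C(l)$, the standard monotone coupling (assign i.i.d.\ uniform $[0,1]$ thresholds to the potential relators and keep those below the density) realizes the $\rho$-random relator set inside the $\rho_C$-random one on a common probability space; hence the $\rho_C$-random group is a quotient of the $\rho$-random group. As $P$ is monotone increasing, i.e., preserved under quotients (by definition; recall that for property $(F\mathcal{E})$ this is Proposition \ref{f.p preserve under quotient prop}), on this coupling the event ``$\Gamma \in \mathcal{B}(k,l,\rho)$ has $P$'' is contained in the event ``$\Gamma \in \mathcal{B}(k,l,\rho_C)$ has $P$'', so
$$\mathbb{P}(\Gamma \in \mathcal{B}(k,l,\rho_C) \text{ has } P) \geq \mathbb{P}(\Gamma \in \mathcal{B}(k,l,\rho) \text{ has } P).$$
Combining this with the preceding Proposition and with $\tfrac{3}{C} = \tfrac{6}{(2k-1)^{l/2}}$ yields the claim. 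There is no serious obstacle here; the only points demanding (minor) care are the admissibility check $C \geq 1$ and making the density-monotonicity step precise via the coupling, both of which are entirely routine.
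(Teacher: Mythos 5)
Your proof is correct and is essentially the argument the paper intends (the paper simply states that the corollary "readily follows"): take $C = \tfrac12(2k-1)^{l/2}$, verify $\rho_C(l) \geq \rho(l)$, and pass between the two binomial densities via the standard monotone coupling together with the monotonicity of $P$ under quotients. Your handling of the degenerate case where $C<1$ (by observing the bound is trivially true there) is a sensible extra check.
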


Following the ideas of \cite{KK} and assuming that $l$ is divisible by $3$, we introduce the model $\mathcal{B} ' (k,l,\rho)$ as a variant of  $\mathcal{B} (k,l,\rho)$ defined as follows: Let $W_{\frac{l}{3}} '$ be reduced words in $\mathcal{A} \cup \mathcal{A}^{-1}$ of length $\frac{l}{3}$ with a first and last letter in $\mathcal{A}$.  Define $W'_l$ to be words in $\mathcal{A} \cup \mathcal{A}^{-1}$ of length $l$ that are concatenations of $3$ words in $W_{\frac{l}{3}} '$.  
\begin{definition}
Let $k \geq 2$ be a constant and $\rho : \mathbb{N} \rightarrow [0,1]$ be a function.  A random group is the model $\mathcal{B} ' (k,l,\rho)$ is a group $\Gamma = \langle \mathcal{A} \vert \mathcal{R} \rangle$ where $\vert \mathcal{A}  \vert =k$ and $\mathcal{R}$ is a set of relators chosen from $W_l'$ independently with probability $\rho (l)$.
\end{definition}

\begin{proposition}
\label{B' prop}
Let $P$ be a monotone increasing group property, $k \geq 2$ be a constant and $\rho : \mathbb{N} \rightarrow [0,1]$ be a function. If a random group in $\mathcal{B} ' (k,l,\rho)$  has $P$ a.a.s., then a random group in $\mathcal{B} (k,l,\rho)$ has $P$ a.a.s.  More explicitly, 
$$\mathbb{P} (\Gamma \in \mathcal{B} (k,l,\rho) \text{ has property } P)  \geq \mathbb{P} (\Gamma ' \in \mathcal{B}' (k,l,\rho) \text{ has property } P) .$$
\end{proposition}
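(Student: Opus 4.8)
The plan is to prove the statement by a direct coupling argument, reducing it to the twin facts that adjoining relators only passes to a quotient and that $P$ is preserved under quotients.

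First I would record the key combinatorial observation: every word in $W'_l$ is a cyclically reduced word of length $l$, so that $W'_l$ embeds into the set of length-$l$ cyclically reduced words underlying the model $\mathcal{B}(k,l,\rho)$. Indeed, a word in $W'_l$ is a concatenation $u_1 u_2 u_3$ with each $u_i \in W'_{l/3}$ reduced and having first and last letters in $\mathcal{A}$ (rather than in $\mathcal{A}^{-1}$). Since the last letter of $u_i$ and the first letter of $u_{i+1}$ both lie in $\mathcal{A}$, they are not mutually inverse, so no cancellation occurs at the two seams and $u_1u_2u_3$ is reduced of length $l$; and since its first and last letters both lie in $\mathcal{A}$, the last letter is not the inverse of the first, so $u_1u_2u_3$ is moreover cyclically reduced.

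Next I would set up the coupling. Fix $l$ divisible by $3$ and let $(\varepsilon_w)_w$ be a family of independent Bernoulli random variables with $\mathbb{P}(\varepsilon_w = 1) = \rho(l)$, indexed by all cyclically reduced words $w$ of length $l$. Put $\mathcal{R} = \{ w : \varepsilon_w = 1\}$ and $\mathcal{R}' = \{ w \in W'_l : \varepsilon_w = 1\} = \mathcal{R} \cap W'_l$. By construction $\langle \mathcal{A} \mid \mathcal{R}\rangle$ is distributed as $\mathcal{B}(k,l,\rho)$, and, since $W'_l$ is a subset of the index set by the previous paragraph and the restricted variables $(\varepsilon_w)_{w \in W'_l}$ are again independent Bernoulli($\rho(l)$), the group $\langle \mathcal{A} \mid \mathcal{R}'\rangle$ is distributed as $\mathcal{B}'(k,l,\rho)$.

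Finally I would conclude: since $\mathcal{R}' \subseteq \mathcal{R}$, the group $\Gamma = \langle \mathcal{A} \mid \mathcal{R}\rangle$ is a quotient of $\Gamma' = \langle \mathcal{A} \mid \mathcal{R}'\rangle$, so because $P$ is monotone increasing the event $\{\Gamma' \text{ has } P\}$ is contained in the event $\{\Gamma \text{ has } P\}$ in this common probability space; hence $\mathbb{P}(\Gamma \in \mathcal{B}(k,l,\rho) \text{ has } P) \geq \mathbb{P}(\Gamma' \in \mathcal{B}'(k,l,\rho) \text{ has } P)$, and letting $l \to \infty$ gives the a.a.s.\ statement. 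The only point requiring genuine care is the combinatorial observation that $W'_l$ really sits inside the sample space of $\mathcal{B}(k,l,\rho)$; once that is in place the rest is the standard monotone-coupling principle.
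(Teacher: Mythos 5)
Your proof is correct and follows essentially the same approach as the paper: both arguments couple the two models by observing that one may first draw the relators in $W'_l$ and then independently draw those in $W_l \setminus W'_l$, so that $\Gamma$ is a quotient of $\Gamma'$, and then invoke monotonicity of $P$. Your version is slightly more explicit in two places where the paper is terse — you verify that words in $W'_l$ are indeed cyclically reduced (so $W'_l \subseteq W_l$), and you write out the Bernoulli coupling rather than merely gesturing at it — but these are the same argument.
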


\begin{proof}
Denote $W_l$ to be the set of cyclically reduced words of length $l$ in $\mathcal{A} \cup \mathcal{A}^{-1}$. The relators of $\Gamma \in \mathcal{B} (k,l,\rho)$ are chosen independently from $W_l$ and thus one can start by first choosing relators from $W_l'$ and then from $W_l \setminus W_l'$.  Thus it is clear that $\Gamma \in \mathcal{B} (k,l,\rho)$ is a quotient of $\Gamma ' \in \mathcal{B} ' (k,l,\rho)$ and by the assumption that $P$ is monotone increasing
$$\mathbb{P} (\Gamma \in \mathcal{B} (k,l,\rho) \text{ has } P) \geq \mathbb{P} (\Gamma ' \in \mathcal{B} ' (k,l,\rho) \text{ has } P).$$
\end{proof}

The above Proposition shows that in order to prove property $(F \mathcal{E})$ holds a.a.s.  for $\mathcal{B} (k,l,\rho)$ (and thus for $\mathcal{D} (k,l,d)$ for $\rho (l) =  \frac{1}{2}(2k-1)^{-(1-d)l}$ by Corollary \ref{from density to binomial coro}) it is sufficient to prove that property $(F \mathcal{E})$ holds a.a.s.  for $\mathcal{B} ' (k,l,\rho)$. 

Last, again following \cite{KK}, we define the model $\mathcal{M}^+ (m, \rho)$ as follows: 
\begin{definition}[The positive triangular model]
Let $m \in \mathbb{N}$ be a parameter and $\rho : \mathbb{N} \rightarrow [0,1]$ be a function.  A random group is the model $\mathcal{M}^+ (m,\rho)$ is a group $\Gamma = \langle S  \vert R \rangle$ where $\vert S  \vert =m, S \cap S^{-1} = \emptyset$ and $R$ is a set of length $3$ relators in the letters of $S$ (not using the letters of $S^{-1}$) chosen independently with probability $\rho (m)$. 
\end{definition}

Fix some $k \geq 2$ and $\mathcal{A}$ with $\vert \mathcal{A} \vert = k$.  Let $W_{\frac{l}{3}}', W_l'$ be as above (with respect to $\mathcal{A}$).  For $m = \vert W_{\frac{l}{3}}' \vert$ and $S = \lbrace s_1,...,s_m \rbrace$ fix a bijection $\varphi : S \rightarrow W_{\frac{l}{3}}'$.  This bijection extends to a bijection $\varphi : \lbrace s_{i_1} s_{i_2} s_{i_3} : 1 \leq i_1, i_2, i_3 \leq m \rbrace \rightarrow W_l'$.  Thus, for $\Gamma \in \mathcal{M}^+ (m, \rho), \Gamma = \langle S \vert R \rangle$  the map $\varphi :  \Gamma \rightarrow \langle \mathcal{A} \vert \varphi ( R) \rangle$ extends to a homomorphism. As the following Lemma states, $\varphi (\Gamma)$ is a finite index subgroup of $ \langle \mathcal{A} \vert \varphi ( R) \rangle$.

\begin{lemma}\cite[Lemma 3.15]{KK}
\label{KK f.i.  lemma}
For every $\Gamma \in \mathcal{M}^+ (m, \rho)$, $\varphi (\Gamma)$ is a finite index subgroup of $\langle \mathcal{A} \vert \varphi ( R) \rangle$.
\end{lemma}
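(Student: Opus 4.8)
The plan is to exhibit the Cayley-type $2$-complex of $\langle \mathcal{A} \mid \varphi(R) \rangle$ as carrying a natural simplicial structure on which $\varphi(\Gamma)$ acts, and then to identify $\varphi(\Gamma)$ with the fundamental group of a finite quotient complex. First I would recall the combinatorial geometry behind the map $\varphi$: each generator $s_i$ of $S$ corresponds to a reduced word $w_i = \varphi(s_i) \in W'_{l/3}$ of length exactly $l/3$ beginning and ending with a letter of $\mathcal{A}$ (not $\mathcal{A}^{-1}$), and each relator $s_{i_1}s_{i_2}s_{i_3} \in R$ maps to the cyclically reduced word $w_{i_1}w_{i_2}w_{i_3}$ of length $l$. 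The key structural point — this is really the content of the lemma — is that because the words in $W'_{l/3}$ start and end with letters in $\mathcal{A}$, concatenating three of them produces no cancellation at the seams, so $\varphi$ really is length-multiplying by $l/3$ on the relevant words, and the subgroup $\varphi(\Gamma) \leq \langle \mathcal{A} \mid \varphi(R)\rangle$ is generated by the (images of the) length-$(l/3)$ words $\{w_i\}$.

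Second I would set up the index computation. Let $F = F(\mathcal{A})$ be the free group, $N \triangleleft F$ the normal closure of $\varphi(R)$, so $\langle \mathcal{A} \mid \varphi(R)\rangle = F/N$. Let $H \leq F$ be the subgroup generated by $\{w_i : 1 \le i \le m\}$ together with — and this is the subtle part — enough additional short words so that $H$ is precisely the set of elements of $F$ whose reduced length is divisible by $l/3$ in a suitable "syllable" sense, or more precisely the preimage under the natural length-mod-$(l/3)$ type function. One checks that $H$ has finite index in $F$: indeed the cosets of $H$ are indexed by the reduced words of length $< l/3$ (up to the equivalence coming from $H$), a finite set whose cardinality is bounded by roughly $(2k)(2k-1)^{l/3-1}$. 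Since $N \leq H$ (because each $\varphi(R)$-relator, being a product of three $w_i$'s, lies in $H$, hence its normal closure does too — here one uses that $H$ is itself normal, or at least that conjugates of the relators stay in $H$), we get $\varphi(\Gamma) = H/N \leq F/N = \langle\mathcal{A}\mid\varphi(R)\rangle$ with $[\langle\mathcal{A}\mid\varphi(R)\rangle : \varphi(\Gamma)] = [F/N : H/N] = [F:H] < \infty$.

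The main obstacle I anticipate is making precise the claim that $\varphi(\Gamma)$ equals $H/N$ rather than merely being contained in it: one must verify both that every $w_i$ actually lies in (the image of) $H$ — immediate from the definition — and, conversely, that $H$ is generated modulo $N$ by the $w_i$'s, i.e. that the extra "short words" one throws into $H$ to make it a subgroup are already expressible via the $w_i$'s once one works modulo the relators $\varphi(R)$. This is exactly where the divisibility hypothesis $3 \mid l$ and the precise definition of $W'_{l/3}$ (first and last letters in $\mathcal{A}$) are used: they guarantee that the "seam" letters propagate correctly so that the type function $F \to \mathbb{Z}/(l/3)$ (or the analogous finite set) is a genuine homomorphism on the nose and that $\varphi$ is a section of it over the relevant words. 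I would carry out this verification by a direct induction on word length, tracking the type of prefixes, and then invoke the elementary fact (Proposition \ref{f.p. is equivalent to f.i. prop} will consume this later) that finite index is all that is needed. Since \cite{KK} proves exactly this statement, I would present the argument at the level of detail above and refer to \cite{KK} for the bookkeeping.
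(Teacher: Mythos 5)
The paper itself gives no proof here: it cites \cite[Lemma 3.15]{KK} and remarks only that the argument transfers verbatim from the density to the binomial model, so I am evaluating your reconstruction on its own terms.

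Your general framing is right -- pass to the free group $F = F(\mathcal{A})$, set $N$ equal to the normal closure of $\varphi(R)$, and identify $\varphi(\Gamma)$ with the image of a subgroup of $F$ in $F/N$. But the way you set up $H$ introduces a genuine gap, and in fact a contradiction. You take $H$ to be $\langle w_1,\dots,w_m\rangle = \langle W'_{l/3}\rangle$ \emph{together with} ``additional short words'' chosen so that $H$ is the preimage of a ``length-mod-$(l/3)$ type function,'' and you then flag, correctly, that the whole argument hinges on showing $\varphi(\Gamma)=H/N$, i.e.\ that those extra generators become redundant modulo $N$. This cannot be repaired: the lemma must hold for every $\Gamma\in\mathcal{M}^+(m,\rho)$, including the case $R=\emptyset$, where $N=\{1\}$ and $\varphi(\Gamma)=\langle W'_{l/3}\rangle$ exactly. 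If $H\supsetneq\langle W'_{l/3}\rangle$ then $\varphi(\Gamma)\subsetneq H/N$ in this case, so ``$H$ is generated modulo $N$ by the $w_i$'s'' is simply false, and your induction on word length would be trying to prove a false statement. Separately, the ``type function'' you invoke does not exist as described: reduced length is not additive in $F$, so ``length mod $l/3$'' is not a homomorphism, and the actual homomorphism $F\to\mathbb{Z}/(l/3)$ sending each $a\in\mathcal{A}$ to $1$ has a kernel that does not agree with $\langle W'_{l/3}\rangle$ (already for $k=2$, $l/3=3$ one checks directly, e.g.\ $(ab^{-1}a)(aaa)^{-1}(aab)=a$, that $\langle W'_3\rangle$ is all of $F$, whereas that kernel has index $3$). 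Finally, the parenthetical claim that $H$ is normal, used to get $N\le H$, is asserted without justification and is not needed.

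The correct and simpler route is to take $H=\langle W'_{l/3}\rangle$ with no additional generators at all, and prove the one nontrivial fact: $\langle W'_{l/3}\rangle$ has finite index in $F(\mathcal{A})$. Once this is known, no containment $N\le H$ and no normality are required: $\varphi(\Gamma)$ is by definition the image of $\langle W'_{l/3}\rangle$ in $F/N$, i.e.\ $\langle W'_{l/3}\rangle N/N$, and
\[
\bigl[F/N : \varphi(\Gamma)\bigr]
= \bigl[F : \langle W'_{l/3}\rangle N\bigr]
\le \bigl[F : \langle W'_{l/3}\rangle\bigr] < \infty .
\]
Proving that $\langle W'_{l/3}\rangle$ has finite index is where the combinatorics of ``first and last letter in $\mathcal{A}$'' enters; the standard way (and, I believe, the route in \cite{KK}) is to build the Stallings core graph of $\langle W'_{l/3}\rangle$: start from a wedge of loops of length $l/3$, one per word in $W'_{l/3}$, and fold. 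Because $W'_{l/3}$ contains \emph{every} reduced word of length $l/3$ with endpoints in $\mathcal{A}$, every locally admissible continuation is realized, and one checks that the folded graph is a finite covering of the rose on $\mathcal{A}$, which is exactly finite index. Your no-cancellation observation at the seams is a true and useful remark, but by itself it only shows $\varphi$ is injective on words of $S$; it does not address finiteness of the index, which is the actual content of the lemma.
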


\begin{remark}
The random group models considered in \cite{KK} are the density models, but the proof of \cite[Lemma 3.15]{KK} passes verbatim to the binomial models.
\end{remark}

All the discussion above reduces the problem of proving $(F \mathcal{E})$ to $\mathcal{D} (k, l, d)$ to proving $(F \mathcal{E})$ to $\mathcal{M}^+ (m, \rho)$ (with the correct choice of $\rho$): 
\begin{theorem}
\label{reduction thm}
Let $\frac{1}{3} <d < 1$, $k \geq 2$ be constants and $\mathcal{E}_l$ be a class of Banach spaces (this class may change as $l$ increases).  Define $m (l) = \frac{1}{2} (2k-1)^{\frac{l}{3}}$ and $\rho : \mathbb{N} \rightarrow [0,1]$ to be $\rho (m) = \frac{1}{4 m^{3(1-d)}}$.  For every $l$ divisible by $3$, 
\begin{dmath*}
\mathbb{P} (\Gamma \in \mathcal{D} (k,l,d) \text{ has property } (F \mathcal{E}_l)) \geq  
{\mathbb{P} (\Gamma \in \mathcal{M}^+ (m (l) , \rho) \text{ has property } (F \mathcal{E}_l)) -  \frac{6}{(2k-1)^{\frac{l}{2}}}.}
\end{dmath*}
\end{theorem}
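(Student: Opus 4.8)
The statement is a packaging of the three reductions developed above, so the proof is a chain of implications with careful bookkeeping of the probability parameters and no new geometric input. The plan is to start from a random group $\Gamma=\langle S\mid R\rangle$ in $\mathcal{M}^+(m(l),\rho)$ and push the property $(F\mathcal{E}_l)$ forward along $\mathcal{M}^+\rightsquigarrow\mathcal{B}'\rightsquigarrow\mathcal{B}\rightsquigarrow\mathcal{D}$, using at each stage that $(F\mathcal{E}_l)$ is monotone increasing (Proposition \ref{f.p preserve under quotient prop}) and that the spaces in $\mathcal{E}_l$ are super-reflexive — this last point is needed for the finite-index step and is automatic in the intended application, where $\mathcal{E}_l$ consists of uniformly curved spaces.

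The first move is from $\mathcal{M}^+$ to $\mathcal{B}'$. Under the bijection $\varphi$ of the preceding discussion, a random group $\Gamma\in\mathcal{M}^+(m(l),\rho)$ produces the group $\langle\mathcal{A}\mid\varphi(R)\rangle$, and since $\varphi$ matches the length-three words over $S$ bijectively with the words of $W'_l$ while inclusion in $R$ (hence in $\varphi(R)$) is independent with probability $\rho(m(l))$, this $\langle\mathcal{A}\mid\varphi(R)\rangle$ is exactly a random group in $\mathcal{B}'(k,l,\tilde\rho)$ with $\tilde\rho(l)=\rho(m(l))$. If $\Gamma$ has $(F\mathcal{E}_l)$, then so does its quotient $\varphi(\Gamma)$ by Proposition \ref{f.p preserve under quotient prop}, and as $\varphi(\Gamma)$ is finite index in $\langle\mathcal{A}\mid\varphi(R)\rangle$ by Lemma \ref{KK f.i.  lemma}, Proposition \ref{f.p. is equivalent to f.i. prop} promotes this to $(F\mathcal{E}_l)$ for $\langle\mathcal{A}\mid\varphi(R)\rangle$ itself. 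Hence $\mathbb{P}(\mathcal{B}'(k,l,\tilde\rho)\text{ has }(F\mathcal{E}_l))\geq\mathbb{P}(\mathcal{M}^+(m(l),\rho)\text{ has }(F\mathcal{E}_l))$, and Proposition \ref{B' prop} then upgrades this to the same inequality with $\mathcal{B}'$ replaced by $\mathcal{B}$, at no cost.

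It remains to pass from $\mathcal{B}(k,l,\tilde\rho)$ down to $\mathcal{D}(k,l,d)$ losing only $\frac{6}{(2k-1)^{l/2}}$, which I would do with the binomial-to-density comparison (Corollary \ref{from density to binomial coro}, or the proposition of \cite{JLR} underlying it with the auxiliary constant $C$ tuned as a function of $l$). The computation to carry out is that, with the prescribed $m(l)=\frac12(2k-1)^{l/3}$ and $\rho(m)=\frac{1}{4m^{3(1-d)}}$, the probability $\tilde\rho(l)=\rho(m(l))$ corresponds to an asymptotic relator density $d$ and, crucially, obeys $\tilde\rho(l)<(2k-1)^{-(1-d)l}$; this strict inequality is exactly where $d>\frac13$ is used, and it is precisely what allows $\tilde\rho$ to be dominated by a $\rho_C$ from \cite{JLR} with $C$ large enough that $\frac{3}{C}\leq\frac{6}{(2k-1)^{l/2}}$ (for the small values of $l$ where this would fail the asserted bound is vacuous, its right-hand side being non-positive). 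Chaining all the inequalities, again invoking monotonicity of $(F\mathcal{E}_l)$ under the addition of relators so that enlarging $\rho$ can only help, yields the claim.

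The only genuinely non-formal point is this last one: checking that $m(l)$ and $\rho$ are calibrated so that, after the $\mathcal{M}^+\to\mathcal{B}'\to\mathcal{B}$ reductions, the resulting binomial density still lies below the threshold required by the $\mathcal{B}\to\mathcal{D}$ comparison — and that this remains possible exactly on the range $d>\frac13$. Everything else is a formal consequence of the hereditary behaviour of $(F\mathcal{E})$ (quotients, finite index in the super-reflexive setting, and adding relators) recorded in the preliminaries.
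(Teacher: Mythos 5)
Your proposal follows the same chain of reductions as the paper ($\mathcal{M}^+\rightsquigarrow\mathcal{B}'\rightsquigarrow\mathcal{B}\rightsquigarrow\mathcal{D}$, using quotients, the finite-index step, and monotonicity under added relators), but the first link has a genuine gap. You assert that under the bijection $\varphi$, a random group $\Gamma\in\mathcal{M}^+(m(l),\rho)$ ``is exactly a random group in $\mathcal{B}'(k,l,\tilde\rho)$'' with $\tilde\rho(l)=\rho(m(l))$. This would require the generating set $S$ of $\mathcal{M}^+(m(l),\rho)$, of size $m(l)=\frac12(2k-1)^{l/3}$, to be in bijection with $W'_{l/3}$, whose cardinality is $k^2(2k-1)^{l/3-2}$. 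These are different numbers (their ratio is $\frac{2k^2}{(2k-1)^2}\ne 1$), so $\mathcal{M}^+(m(l),\rho)$ and $\mathcal{B}'(k,l,\tilde\rho)$ are not the same random model, and the claimed identification fails. The paper handles exactly this mismatch: Lemma \ref{KK f.i.  lemma} and Proposition \ref{f.p. is equivalent to f.i. prop} are applied with the correct size $m_1(l)=\lvert W'_{l/3}\rvert$, and then a separate monotonicity argument (in the number of generators, not just in the density of relators) is invoked to pass from $\mathcal{M}^+(m_1,\cdot)$ to $\mathcal{M}^+(m,\cdot)$ with $m\ge m_1$. Your proposal needs that extra step and does not supply it; your closing paragraph invokes monotonicity only ``under the addition of relators,'' which is the wrong parameter.

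A secondary issue is your escape hatch for small $l$. You argue that when the required inequality on the auxiliary constant $C$ fails, ``the asserted bound is vacuous, its right-hand side being non-positive.'' That is not so: the right-hand side is $\mathbb{P}(\Gamma\in\mathcal{M}^+(m(l),\rho)\text{ has }(F\mathcal{E}_l))-\frac{6}{(2k-1)^{l/2}}$, and this is non-positive only if $\mathbb{P}(\mathcal{M}^+\text{ has }(F\mathcal{E}_l))\le\frac{6}{(2k-1)^{l/2}}$, which you do not know a priori. The paper sidesteps this entirely by invoking Corollary \ref{from density to binomial coro} directly, with the fixed $\rho_1(l)=\frac12(2k-1)^{-(1-d)l}$, rather than re-tuning $C$ around the derived $\tilde\rho$. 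If you prefer your forward-pushing presentation, you should likewise route the $\mathcal{B}\rightsquigarrow\mathcal{D}$ step through Corollary \ref{from density to binomial coro} and absorb the constant-factor discrepancy between $\tilde\rho$ and $\rho_1$ via monotonicity in $\rho$, rather than hoping small $l$ is vacuous.
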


\begin{proof}
By Corollary \ref{from density to binomial coro},  for $\rho_1 (l) = \frac{1}{2}(2k-1)^{-(1-d)l}$ it hold that
$$\mathbb{P} (\Gamma \in \mathcal{D} (k,l,d) \text{ has property } (F \mathcal{E}_l)) \geq \mathbb{P} (\Gamma \in \mathcal{B} (k,l,\rho_1) \text{ has property } (F \mathcal{E}_l)) -  \frac{6}{(2k-1)^{\frac{l}{2}}}.$$
By Proposition \ref{B' prop},  
$$\mathbb{P} (\Gamma \in \mathcal{B} (k,l,\rho_1) \text{ has property } (F \mathcal{E}_l))  \geq \mathbb{P} (\Gamma ' \in \mathcal{B}' (k,l,\rho_1) \text{ has property } (F \mathcal{E}_l)) .$$
Thus we are left to show that for every $l$ divisible by $3$ is holds that
$$\mathbb{P} (\Gamma ' \in \mathcal{B}' (k,l,\rho_1) \text{ has property } P) \geq {\mathbb{P} (\Gamma \in \mathcal{M}^+ (m (l) , \rho) \text{ has property } (F \mathcal{E}_l)),}$$
with 
$m (l) = \frac{1}{2} (2k-1)^{\frac{l}{3}}$ and $\rho (m) =  \frac{1}{4 m^{3(1-d)}}$. 

Assume that $l$ is divisible by $3$. By Lemma \ref{KK f.i. lemma}, for every $\Gamma ' \in \mathcal{B} ' (k,l,\frac{1}{2} (2k-1)^{-(1-d)l})$ there is a group $\Gamma \in  \mathcal{M}^+ (m_1 , \rho_2)$ with $m_1 (l)  = \vert W_{\frac{l}{3}}' \vert = k^2 (2k-1)^{\frac{l}{3}-2}$ and $\rho_2 (m _1) = \frac{1}{2} (2k-1)^{-(1-d)l}$ such that $\varphi (\Gamma)$ is a finite index subgroup in $\Gamma '$ and $\Gamma$, $\Gamma '$ occur in the same probability.  

By Proposition \ref{f.p. is equivalent to f.i. prop},  if $\Gamma$ has property $(F \mathcal{E}_l)$, then $\Gamma '$ has property  $(F \mathcal{E}_l)$.  Using the fact that property $(F \mathcal{E}_l)$ is monotone increasing it follows that we can replace $m_1$ by 
$$m = \frac{1}{2} (2k-1)^{\frac{l}{3}} \geq m_1,$$
i.e., 
\begin{dmath*}
\mathbb{P} (\Gamma ' \in \mathcal{B}' (k,l,\rho_1) \text{ has property } P) \geq 
{\mathbb{P} (\Gamma \in \mathcal{M}^+ (m_1  , \rho_2 (m_1)) \text{ has property } (F \mathcal{E}_l))}
 \geq {\mathbb{P} (\Gamma \in \mathcal{M}^+ (m  , \rho_2 (m_1)) \text{ has property } (F \mathcal{E}_l)).}
 \end{dmath*}

Note that
\begin{dmath*}
\rho_2 (m_1) = \frac{1}{2} (2k-1)^{-(1-d)l} =\frac{1}{2}  \frac{1}{((2k-1)^{\frac{l}{3}})^{3(1-d)}} = \frac{1}{2} \frac{1}{ 2 m^{3(1-d)}} = 
\frac{1}{4 m^{3(1-d)}}.
\end{dmath*}
It follows that for $\rho (m) = \frac{1}{4 m^{3(1-d)}}$  it holds that
$$\mathbb{P} (\Gamma ' \in \mathcal{B}' (k,l,\rho_1) \text{ has property } P) \geq {\mathbb{P} (\Gamma \in \mathcal{M}^+ (m  , \rho (m)) \text{ has property } (F \mathcal{E}_l)),}$$
as needed.
\end{proof}

Theorem \ref{reduction thm} reduces the proof of Banach fixed point properties in the Gromov density to proving Banach fixed properties in the positive triangular model. This model is convenient for us, since its Cayley complex is a two-dimensional simplicial complex with bipartite links and thus we can apply our Theorem \ref{Zuk type thm} above. For this, we will need to bound the second eigenvalue of the random walk on the link of the Cayley complex of a group in $\mathcal{M}^+ (m, \rho)$. Our approach for bounding the second eigenvalue is heavily based on \cite{ALS} and \cite{LaatSalle} and we claim very little originality here (a few adaptations were needed in order to provide a sharp result in the triangular positive model). In order to bound the second eigenvalue, we will first need some general lemmata regarding graphs and the bipartite Erd\"{o}s-R\'{e}nyi graph.  


\begin{lemma}
\label{several graphs lemma}
Let $(V,E_1),...,(V,E_k)$ be connected graphs on the same vertex set $V$. Assume that $E_1,...,E_k$ are pairwise disjoint and that there are constants $d, 0 < \varepsilon <1$ such that for every $i$, the degree of a vertex in $(V,E_i)$ is between $d(1- \varepsilon)$ and $d(1+\varepsilon)$. Assume there is $\lambda <1$ such that the second eigenvalue of the random walk on each $(V, E_i)$ is less than $\lambda$. Then the second eigenvalue of the random walk on $(V, E_1 \cup ... \cup E_k)$ is less that $\lambda +\frac{16 \varepsilon^2}{(1- \varepsilon)^4}$. 
\end{lemma}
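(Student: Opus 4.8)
The plan is to work directly with the Rayleigh quotient of the random walk on the union graph and to bound it from above by comparing it, edge-by-edge, to the random walks on the pieces $(V,E_i)$. First I would normalize: write $A_i$ for the random walk operator on $(V,E_i)$ acting on $\ell^2(V,m_i)$, where $m_i(v) = \deg_{E_i}(v)$, and note that $d(1-\varepsilon) \le m_i(v) \le d(1+\varepsilon)$ for every $v$; similarly let $A$ be the random walk on the union graph with degree function $m(v) = \sum_i m_i(v)$, so that $kd(1-\varepsilon) \le m(v) \le kd(1+\varepsilon)$. The key observation is that the (unnormalized) Dirichlet form of the union graph is the sum of the Dirichlet forms of the pieces: for $\phi : V \to \mathbb{C}$,
$$\sum_{\{u,v\} \in E_1 \cup \cdots \cup E_k} |\phi(u) - \phi(v)|^2 = \sum_{i=1}^k \sum_{\{u,v\} \in E_i} |\phi(u) - \phi(v)|^2,$$
since the $E_i$ are pairwise disjoint. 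The second eigenvalue $\lambda(\cdot)$ of a random walk is $1$ minus the spectral gap, and the spectral gap is the infimum of $\big(\sum_{\{u,v\} \in E} |\phi(u)-\phi(v)|^2\big) \big/ \big(\sum_v m(v)|\phi(v)|^2\big)$ over $\phi$ orthogonal (in the appropriate $\ell^2(V,m)$) to the constants. So what has to be controlled is the discrepancy between the various weightings $m_i$ and $m$.

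Next I would make this quantitative. Fix $\phi$ with $\sum_v m(v)\phi(v) = 0$ (orthogonality to constants in the union-graph inner product). For each $i$, let $\bar\phi_i = \frac{1}{m_i(V)}\sum_v m_i(v)\phi(v)$ be the $m_i$-mean; then $\phi - \bar\phi_i$ is orthogonal to constants in $\ell^2(V,m_i)$, and the hypothesis $\lambda(V,E_i) < \lambda$ gives
$$\sum_{\{u,v\} \in E_i} |\phi(u)-\phi(v)|^2 \;\ge\; (1-\lambda)\sum_v m_i(v)\,|\phi(v) - \bar\phi_i|^2.$$
Summing over $i$ and using $\sum_v m_i(v)|\phi(v)-\bar\phi_i|^2 \ge \sum_v m_i(v)|\phi(v)|^2 - \text{(correction)}$ — more precisely $\sum_v m_i(v)|\phi(v)-\bar\phi_i|^2 = \sum_v m_i(v)|\phi(v)|^2 - m_i(V)|\bar\phi_i|^2$ — one gets a lower bound on the union Dirichlet form of the shape $(1-\lambda)\big(\sum_v m(v)|\phi(v)|^2 - \sum_i m_i(V)|\bar\phi_i|^2\big)$. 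The main work is then to show that the ``defect'' term $\sum_i m_i(V)|\bar\phi_i|^2$ is small relative to $\sum_v m(v)|\phi(v)|^2$: because $\phi$ has zero $m$-mean, each $\bar\phi_i$ equals a weighted average of the differences $m_i/m - 1/k$ against $\phi$, and $|m_i(v)/m(v) - 1/k| = O(\varepsilon/(1-\varepsilon))$, so by Cauchy–Schwarz $|\bar\phi_i|^2 = O\big((\varepsilon/(1-\varepsilon))^2\big)\cdot \frac{1}{m_i(V)}\sum_v m(v)|\phi(v)|^2$ up to combinatorial factors. Tracking the constants, and the $(1-\varepsilon)$ versus $(1+\varepsilon)$ slack in converting between $m_i(V)$ and $m(V)/k$, should collapse to the stated error $\frac{16\varepsilon^2}{(1-\varepsilon)^4}$.

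The step I expect to be the main obstacle is the bookkeeping in the defect estimate: controlling $\sum_i m_i(V)|\bar\phi_i|^2$ cleanly enough that the final constant comes out as exactly $16\varepsilon^2/(1-\varepsilon)^4$ rather than something messier. The subtle point is that $\phi$ is assumed mean-zero with respect to $m$, \emph{not} with respect to any individual $m_i$, so one cannot directly drop the $\bar\phi_i$ terms; they must be estimated via the size of $m_i/m - 1/k$, and the four powers of $(1-\varepsilon)$ in the denominator suggest that two of them come from the degree bounds on $m_i$ and two from the degree bounds on $m$ (or from squaring a bound that itself carries $(1-\varepsilon)^2$ in the denominator). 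An alternative, possibly cleaner route would be to phrase everything operator-theoretically: write $A$ as a convex-combination-like perturbation $\sum_i \frac{m_i}{m} A_i$ (diagonal weightings), bound $\|\frac{m_i}{m} - \frac{1}{k}\|_\infty$, and use that $\frac{1}{k}\sum_i A_i$ has spectral gap at least $1-\lambda$ on the common-weight space, then perturb; but I would expect the Rayleigh-quotient argument above to be the most transparent and to be what the author intends.
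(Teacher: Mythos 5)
Your proposal follows the same strategy as the paper's proof: decompose the Dirichlet form of the union into the sum of the Dirichlet forms of the pieces, apply the spectral gap on each piece after shifting by the $m_i$-mean $\bar\phi_i$, and then bound the defect term $\sum_i m_i(V)|\bar\phi_i|^2$ by exploiting the $m$-mean-zero condition on $\phi$ together with the near-uniformity of the degree ratios, using Jensen/Cauchy--Schwarz to land on $\tfrac{16\varepsilon^2}{(1-\varepsilon)^4}$. Your guess that the four powers of $(1-\varepsilon)$ come from squaring a discrepancy bound that itself carries $(1-\varepsilon)^2$ is exactly how the paper arrives at the constant, via the estimate $\bigl|1 - \tfrac{m(u)}{m_i(u)}\tfrac{m_i(V)}{m(V)}\bigr| \le \tfrac{4\varepsilon}{(1-\varepsilon)^2}$.
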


\begin{proof}
For $v \in V$, denote $m_i (v)$ to be the degree of $v$ in $(V, E_i)$ and $m(v)$ to be the degree of $v$ in $(V, E_1 \cup ... E_k)$. We need to prove that for $\phi : V \rightarrow \mathbb{R}$, if $\sum_{v \in V} m (v) \phi (v) = 0$, then 
$$\sum_{v \in V} m(v) (M \phi (v)) \phi (v) \leq \left(\lambda + \frac{16 \varepsilon^2}{(1- \varepsilon)^4} \right) \sum_{v \in V} m (v) \vert \phi (v) \vert^2,$$
where $M$ is the random walk operator on $(V, E_1 \cup ...  \cup E_k)$.  Equivalently, we need to show that for the normalized Laplacian $ I-M$ it holds for $\phi$ as above that 
$$\sum_{v \in V} m(v) ((I-M) \phi (v)) \phi (v) \geq \left(1-\lambda - \frac{16 \varepsilon^2}{(1- \varepsilon)^4} \right) \sum_{v \in V} m (v) \vert \phi (v) \vert^2.$$
Recall that 
$$\sum_{v \in V} m(v) ((I-M) \phi (v)) \phi (v) = 
\sum_{\lbrace u,v \rbrace \in E_1 \cup ...  \cup E_k} \vert \phi (u) - \phi (v) \vert^2$$
where double edges (if they exist) are added several times according to their multiplicity. 
Similarly, if we denote $M_i$ to be the random walk operator on $(V,E_i)$ then for every $i$
$$\sum_{v \in V} m(v) ((I-M_i) \phi (v)) \phi (v) = 
\sum_{\lbrace u,v \rbrace \in E_i} \vert \phi (u) - \phi (v) \vert^2.$$ 

Thus
\begin{dmath*}
\sum_{v \in V} m(v) ((I-M) \phi (v)) \phi (v) = 
\sum_{\lbrace u,v \rbrace \in E_1 \cup ...  \cup E_k} \vert \phi (u) - \phi (v) \vert^2 = 
\sum_{i=1}^k \sum_{\lbrace u,v \rbrace \in E_i} \vert \phi (u) - \phi (v) \vert^2 =
\sum_{i=1}^k  \sum_{v \in V} m(v) ((I-M_i) \phi (v)) \phi (v) \geq 
(1- \lambda) \sum_{i=1}^k \sum_{v \in V} m_i (v)\left\vert \phi (v) - \frac{1}{m_i (V)} \sum_{u \in V} m_i (u) \phi (u) \right\vert^2 =
(1- \lambda) \sum_{i=1}^k \sum_{v \in V} m_i (v) \vert \phi (v) \vert^2 - (1- \lambda) \sum_{i=1}^k \sum_{v \in V} m_i (v) \left\vert \sum_{u \in V} \frac{m_i (u)}{m_i (V)} \phi (u) \right\vert^2 = 
.(1- \lambda) \sum_{v \in V} m (v) \vert \phi (v) \vert^2 - (1- \lambda)  \sum_{i=1}^k m_i (V) \left\vert \sum_{u \in V} \frac{m_i (u)}{m_i (V)} \phi (u) \right\vert^2.
\end{dmath*}

Thus it is sufficient to prove that 
$$ \sum_{i=1}^k m_i (V) \left\vert \sum_{u \in V} \frac{m_i (u)}{m_i (V)} \phi (u) \right\vert^2 \leq \frac{16 \varepsilon^2}{(1- \varepsilon)^4} \sum_{v \in V} m (v) \vert \phi (v) \vert^2 .$$
Fix $1 \leq i \leq k$. Note that for every $u \in V$, 
$$\frac{(1-\varepsilon)d}{k (1+\varepsilon)d} \leq \frac{m_i (u)}{m(u)} \leq \frac{(1+\varepsilon)d}{k (1-\varepsilon)d}.$$
Thus
$$ \left( \frac{1-\varepsilon}{1+\varepsilon} \right)^2 \leq \frac{m(u)}{m_i (u)} \frac{m_i (V)}{m(V)} \leq \left( \frac{1+\varepsilon}{1-\varepsilon} \right)^2.$$
Therefore 
$$\left\vert 1 - \frac{m(u)}{m_i (u)} \frac{m_i (V)}{m(V)}  \right\vert \leq \max \left\lbrace  \left( \frac{1+\varepsilon}{1-\varepsilon} \right)^2 - 1, 1- \left( \frac{1-\varepsilon}{1+\varepsilon} \right)^2 \right\rbrace = \frac{4 \varepsilon}{(1- \varepsilon)^2}.$$
It follows that 
\begin{dmath*}
\sum_{i=1}^k m_i (V) \left\vert \sum_{u \in V} \frac{m_i (u)}{m_i (V)} \phi (u) \right\vert^2 = 
\sum_{i=1}^k m_i (V) \left\vert \sum_{u \in V} \left( \frac{m_i (u)}{m_i (V)} - \frac{m(u)}{m (V)} \right) \phi (u) \right\vert^2 = 
{\sum_{i=1}^k m_i (V) \left\vert \sum_{u \in V} \frac{m_i (u)}{m_i (V)} \left( 1 - \frac{m(u)}{m_i (u)} \frac{m_i (V)}{m(V)} \right) \phi (u) \right\vert^2 }\leq
{\sum_{i=1}^k m_i (V) \sum_{u \in V} \frac{m_i (u)}{m_i (V)}   \left\vert \left( 1 - \frac{m(u)}{m_i (u)} \frac{m_i (V)}{m(V)} \right) \phi (u) \right\vert^2 } \leq  
{\sum_{i=1}^k \sum_{u \in V} m_i (u)    \left(  \frac{4 \varepsilon}{(1- \varepsilon)^2}  \right)^2  \vert \phi (u) \vert^2 }=
 \frac{16  \varepsilon^2}{(1- \varepsilon)^4} \sum_{u \in V} m (u)   \vert \phi (u) \vert^2,
\end{dmath*}
as needed.

\end{proof}

\begin{lemma}
\label{graph deleting lemma}
Let $(V, E_1), (V,E_2)$ be finite graphs on the same vertex set $V$ and $0 \leq \lambda <1$, $ 0 \leq \varepsilon <1$ be a constants. Denote $m_i (v)$ to be the degree of $v$ in $(V,E_i)$. Assume that $(V,E_1 \cup E_2)$ is connected and that the second eigenvalue of the random walk on it is less than $\lambda$.  If for every $v \in V$, $\frac{m_2 (v)}{m_1 (v)} \leq \varepsilon <\frac{1-\lambda}{4}$, then $(V, E_1)$ is connected and the second eigenvalue of the random walk on it is less than $\lambda + 4 \varepsilon$.
\end{lemma}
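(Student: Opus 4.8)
The plan is to compare the Dirichlet (energy) forms of the two random walks. Write $m(v)=m_1(v)+m_2(v)$ for the degree in $(V,E_1\cup E_2)$; the hypothesis $m_2(v)/m_1(v)\le\varepsilon$ tacitly forces $m_1(v)>0$ for every $v$, and gives $m_1(v)\le m(v)\le(1+\varepsilon)m_1(v)$. As recalled in the proof of Lemma \ref{several graphs lemma}, for any $\phi:V\to\mathbb{R}$ one has $\sum_v m(v)((I-M)\phi(v))\phi(v)=\sum_{\{u,v\}\in E_1\cup E_2}|\phi(u)-\phi(v)|^2$ and likewise $\sum_v m_1(v)((I-M_1)\phi(v))\phi(v)=\sum_{\{u,v\}\in E_1}|\phi(u)-\phi(v)|^2$, where $M,M_1$ denote the random walk operators on $(V,E_1\cup E_2)$ and $(V,E_1)$ respectively. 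The point is that energy is additive over edges, so the $E_1$-energy equals the $(E_1\cup E_2)$-energy minus the (small) $E_2$-energy; the only care needed is that the spectral gap hypothesis refers to functions with $m$-mean zero while the conclusion refers to functions with $m_1$-mean zero.

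So fix $\psi:V\to\mathbb{R}$ with $\sum_v m_1(v)\psi(v)=0$, set $c=\frac{1}{m(V)}\sum_v m(v)\psi(v)$ and $\phi=\psi-c$, so that $\sum_v m(v)\phi(v)=0$. Since energy is insensitive to additive constants, $\sum_{\{u,v\}\in E_1}|\phi(u)-\phi(v)|^2=\sum_{\{u,v\}\in E_1}|\psi(u)-\psi(v)|^2$. By the spectral gap assumption on $(V,E_1\cup E_2)$, $\sum_{\{u,v\}\in E_1\cup E_2}|\phi(u)-\phi(v)|^2\ge(1-\lambda)\sum_v m(v)|\phi(v)|^2\ge(1-\lambda)\sum_v m_1(v)|\phi(v)|^2$, while the crude bound $|\phi(u)-\phi(v)|^2\le 2|\phi(u)|^2+2|\phi(v)|^2$ summed over $E_2$ gives $\sum_{\{u,v\}\in E_2}|\phi(u)-\phi(v)|^2\le 2\sum_v m_2(v)|\phi(v)|^2\le 2\varepsilon\sum_v m_1(v)|\phi(v)|^2$. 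Subtracting, $\sum_{\{u,v\}\in E_1}|\phi(u)-\phi(v)|^2\ge(1-\lambda-2\varepsilon)\sum_v m_1(v)|\phi(v)|^2$.

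Finally, because $\sum_v m_1(v)\psi(v)=0$, expanding gives $\sum_v m_1(v)|\phi(v)|^2=\sum_v m_1(v)|\psi(v)|^2+c^2 m_1(V)\ge\sum_v m_1(v)|\psi(v)|^2$, so combining the last two displays yields $\sum_{\{u,v\}\in E_1}|\psi(u)-\psi(v)|^2\ge(1-\lambda-2\varepsilon)\sum_v m_1(v)|\psi(v)|^2\ge(1-\lambda-4\varepsilon)\sum_v m_1(v)|\psi(v)|^2$. This is precisely the statement that every $m_1$-mean-zero function has Rayleigh quotient at least $1-\lambda-4\varepsilon$ for $I-M_1$, i.e. the second largest eigenvalue of $M_1$ is less than $\lambda+4\varepsilon$; since $\varepsilon<\frac{1-\lambda}{4}$ this bound is strictly less than $1$, and a random walk whose second eigenvalue is strictly below $1$ cannot have a disconnected underlying graph (a component indicator, recentered to $m_1$-mean zero, would be a nonconstant $1$-eigenfunction of $M_1$ of zero energy), so $(V,E_1)$ is connected.

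There is no real obstacle here; the proof is essentially routine once one notes that $E_1$-energy is \emph{below} $(E_1\cup E_2)$-energy, so one cannot invoke monotonicity directly but must instead subtract the $E_2$-energy and bound it. The one thing to get right is the bookkeeping between the two different mean-zero normalizations, which is handled by the recentering $\phi=\psi-c$ together with the elementary inequality $\sum_v m_1(v)|\phi(v)|^2\ge\sum_v m_1(v)|\psi(v)|^2$.
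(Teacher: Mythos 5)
Your proof is correct, and it takes a genuinely different route from the paper's for the delicate half of the argument. Both proofs begin the same way: one splits the $E_1\cup E_2$-energy into $E_1$-energy plus $E_2$-energy and bounds the $E_2$-term crudely by $2\varepsilon\sum_v m_1(v)|\phi(v)|^2$. Where you diverge is in applying the spectral gap of $M$ to a function that only has $m_1$-mean zero, not $m$-mean zero. The paper keeps $\phi$ uncentered and writes the spectral gap inequality with an explicit projection correction, $\ge(1-\lambda)\bigl(\|\phi\|_m^2 - m(V)\,|\tfrac1{m(V)}\sum_u m(u)\phi(u)|^2\bigr)$, then rewrites the projection term by inserting the vanishing quantity $\tfrac1{m_1(V)}\sum_u m_1(u)\phi(u)$ and bounds it by $\varepsilon^2(1+\varepsilon)\|\phi\|_{m_1}^2\le 2\varepsilon\|\phi\|_{m_1}^2$ via a pointwise estimate on $\bigl|1-\tfrac{m(V)}{m(u)}\tfrac{m_1(u)}{m_1(V)}\bigr|$. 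You instead recenter $\phi=\psi-c$ so that $\phi$ genuinely has $m$-mean zero, apply the spectral gap with no correction, use $\|\phi\|_m^2\ge\|\phi\|_{m_1}^2$, and then close the loop with the one-line Pythagorean identity $\|\phi\|_{m_1}^2=\|\psi\|_{m_1}^2+c^2 m_1(V)\ge\|\psi\|_{m_1}^2$, exploiting that energy is shift-invariant. Your route is shorter, avoids the pointwise ratio estimate entirely (you only need the trivial $m\ge m_1$ at that step, not $m\le(1+\varepsilon)m_1$), and in fact yields the stronger conclusion that the second eigenvalue is less than $\lambda+2\varepsilon$; the paper's bookkeeping costs an extra $2\varepsilon$ from the projection correction, which is why its stated bound is $\lambda+4\varepsilon$. (As a side remark, in the paper's chain of inequalities several of the intermediate signs read $\le$ where $\ge$ is meant; this is evidently typographical and does not affect the argument.)
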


The proof is very similar to the proof of Lemma \ref{several graphs lemma} and thus we allow ourselves to omit some details.
\begin{proof}
Denote $M_i, i=1,2$ to be the random walk operator on $(V,E_i)$.  Let $\phi : V \rightarrow \mathbb{R}$ such that $\sum_{v} m_1 (v) \phi (v) =0$.  We need to show that 
$$\sum_{v \in V} m_1 (v) (M_1 \phi  (v) ) \phi (v) \leq \left( \lambda + 4 \varepsilon \right) \sum_{v \in V} m_1 (v) \vert \phi (v) \vert^2$$
or equivalently that 
$$\sum_{v \in V} m_1 (v) ((I-M_1) \phi  (v) ) \phi (v) \geq \left( 1-\lambda - 4 \varepsilon \right) \sum_{v \in V} m_1 (v) \vert \phi (v) \vert^2.$$

Observe that 
\begin{dmath*}
\sum_{v \in V} m(v) ((I-M) \phi (v)) \phi (v) = 
\sum_{v \in V} m_1 (v) ((I-M_1) \phi (v)) \phi (v) + \sum_{v \in V} m_2 (v) ((I-M_2) \phi (v)) \phi (v) \leq 
\sum_{v \in V} m_1 (v) ((I-M_1) \phi (v)) \phi (v) + 2 \sum_{v \in V} m_2 (v) \vert \phi (v) \vert^2 \leq 
\sum_{v \in V} m_1 (v) ((I-M_1) \phi (v)) \phi (v) + 2 \varepsilon \sum_{v \in V} m_1 (v) \vert \phi (v) \vert^2.
\end{dmath*}
Thus, 
$$\sum_{v \in V} m_1 (v) ((I-M_1) \phi (v)) \phi (v)  \geq \sum_{v \in V} m(v) ((I-M) \phi (v)) \phi (v) - 2 \varepsilon \sum_{v \in V} m_1 (v) \vert \phi (v) \vert^2$$
and we are left to prove that
$$\sum_{v \in V} m(v) ((I-M) \phi (v)) \phi (v)  \geq (1- \lambda - 2\varepsilon) \sum_{v \in V} m_1 (v) \vert \phi (v) \vert^2.$$

Note that for every $u \in V$,  $\frac{1}{1+ \varepsilon} \leq \frac{m_1 (u)}{m (u)} \leq 1$ and
$$\frac{1}{1+ \varepsilon} \leq \frac{m (V)}{m(u)} \frac{m_1 (u)}{m_1 (V)} \leq 1+ \varepsilon.$$
Thus
\begin{dmath*}
\sum_{v \in V} m(v) ((I-M) \phi (v)) \phi (v) \geq 
(1- \lambda) \sum_{v \in V} m(v) \vert \phi (v) \vert^2 - (1- \lambda)  m (V) \left\vert \sum_{u \in V} \frac{m (u)}{m(V)} \phi (u) \right\vert^2 \geq (1- \lambda) \sum_{v \in V} m_1 (v) \vert \phi (v) \vert^2 -  m (V) \left\vert \sum_{u \in V} \frac{m (u)}{m(V)} \phi (u) \right\vert^2 =
(1- \lambda) \sum_{v \in V} m_1 (v) \vert \phi (v) \vert^2 -  m (V) \left\vert \sum_{u \in V} \frac{m (u)}{m(V)} \left(1 -   \frac{m (V)}{m(u)} \frac{m_1 (u)}{m_1 (V)} \right) \phi (u) \right\vert^2 \leq 
(1- \lambda) \sum_{v \in V} m_1 (v) \vert \phi (v) \vert^2 -  \varepsilon^2 \sum_{u \in V} m (u) \vert  \phi (u) \vert^2  \leq 
(1- \lambda) \sum_{v \in V} m_1 (v) \vert \phi (v) \vert^2 -  \varepsilon^2 (1+\varepsilon) \sum_{u \in V} m_1 (u) \vert  \phi (u) \vert^2 \leq 
(1- \lambda) \sum_{v \in V} m_1 (v) \vert \phi (v) \vert^2 -  2\varepsilon \sum_{u \in V} m_1 (u) \vert  \phi (u) \vert^2,
\end{dmath*}
as needed.
\end{proof}

For $n \in \mathbb{N}$ and $\rho : \mathbb{N} \rightarrow [0,1]$, let $\mathbb{G}_\bipartite (n, \rho)$ be the bipartite Erd\"{o}s-R\'{e}nyi graph defined as follows: The vertex set of $V$ is $S_1 \cup S_2$ where $S_1, S_2$ are disjoint sets of size $n$.  We randomly choose edges in the set $\lbrace \lbrace u,v \rbrace : u \in S_1, v \in S_2 \rbrace$ where each edge is chosen independently with probability $\rho (n)$.  

\begin{theorem}\cite[Theorem A]{Ash}
\label{second ev of bipart ER thm}
If $\rho (n) \geq  \frac{\log^6 (n)}{n}$, then with probability tending to $1$ as $n$ grows to infinity the graph $\mathbb{G}_\bipartite (n, \rho)$ is connected and its second largest eigenvalue is less than $\frac{8}{\sqrt{n \rho (n)}}$. 
\end{theorem}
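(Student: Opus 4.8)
The plan is to translate the spectral question about the random walk on $\mathbb{G}_\bipartite(n,\rho)$ into a bound on the singular values of the centered random biadjacency matrix, and then invoke a spectral-norm estimate for random matrices with independent bounded entries. Write $V=S_1\sqcup S_2$ with $|S_1|=|S_2|=n$, let $B\in\{0,1\}^{S_1\times S_2}$ be the biadjacency matrix (its entries $B_{uv}$ are i.i.d.\ $\mathrm{Bernoulli}(\rho)$), and set $d:=n\rho$, the expected degree. Let $D_1,D_2$ be the diagonal degree matrices of the two sides, let $D=\mathrm{diag}(D_1,D_2)$, and put $\widetilde B:=D_1^{-1/2}BD_2^{-1/2}$. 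A direct computation shows that the random walk operator $A$ on the weighted graph $(V,E,m)$ with $m(v)=\deg v$ is conjugate, via the unitary $\phi\mapsto D^{1/2}\phi$, to the symmetric matrix whose $S_1\times S_2$ block is $\widetilde B$, whose $S_2\times S_1$ block is $\widetilde B^{\mathsf T}$, and whose diagonal blocks vanish; hence $\Spec(A)=\{\pm\sigma_i(\widetilde B)\}_i$. One checks, with no connectivity hypothesis, that $\widetilde B\bigl(D_2^{1/2}\mathbbm{1}_{S_2}\bigr)=D_1^{1/2}\mathbbm{1}_{S_1}$ and that $\Vert\widetilde B\Vert\le 1$ always, so $\sigma_1(\widetilde B)=1$ and the degree-weighted side indicators, which span $\im(M_{\sides})$ after conjugation, are top singular vectors. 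It follows that $\Vert A(I-M_{\sides})\Vert\le\sigma_2(\widetilde B)$, so the theorem reduces to showing $\sigma_2(\widetilde B)<8/\sqrt d$ a.a.s.; connectivity will then come for free.

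Next I would record two concentration facts, both immediate from Chernoff bounds and a union bound over the $2n$ vertices in the regime $d=n\rho\ge\log^6 n\gg\log n$. First, a.a.s.\ every vertex has degree at least $d/2$, so $\Vert D_i^{-1/2}\Vert_{\mathrm{op}}\le (d/2)^{-1/2}$. Second, since $\mathbb E B=\rho J$ has rank one ($J$ the all-ones $n\times n$ matrix), Weyl's inequality for singular values gives $\sigma_2(B)\le\Vert B-\mathbb E B\Vert$. Combining, on the good event,
\[
\sigma_2(\widetilde B)\ \le\ \Vert D_1^{-1/2}\Vert_{\mathrm{op}}\,\Vert B-\mathbb E B\Vert\,\Vert D_2^{-1/2}\Vert_{\mathrm{op}}\ \le\ \tfrac{2}{d}\,\Vert R\Vert,\qquad R:=B-\mathbb E B,
\]
where $R$ has independent, mean-zero entries bounded by $1$ of variance $\rho(1-\rho)\le\rho$. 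So it suffices to show $\Vert R\Vert\le C\sqrt d$ a.a.s.\ for some constant $C<4$; the sharp value $C=2+o(1)$ works with room to spare.

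The estimate $\Vert R\Vert=(2+o(1))\sqrt d$ a.a.s.\ is the heart of the matter, and it is precisely here that the hypothesis $\rho\ge\log^6 n/n$, i.e.\ $d\gg\log n$, is needed: below that threshold a single maximum-degree vertex already pushes $\Vert R\Vert$ up to order $\sqrt{d\log n/\log\log n}$. I would prove it by the moment (trace) method: expand $\mathbb E\,\tr\bigl((RR^{\mathsf T})^{k}\bigr)$ as a sum over closed length-$2k$ walks on the bipartite vertex set, note that only walks using each edge at least twice survive taking expectations, and carry out the F\"uredi--Koml\'os / Vu combinatorial count, adapted to the bipartite bounded-entry setting, to get $\mathbb E\,\tr\bigl((RR^{\mathsf T})^{k}\bigr)\le n\,(2\sqrt d)^{2k}(1+o(1))$ for $k\sim\log n$ in the regime $d\gg\log n$; Markov's inequality then gives $\mathbb P\bigl[\Vert R\Vert>(2+\delta)\sqrt d\bigr]\to 0$ for any fixed $\delta>0$. (An alternative is the Kahn--Szemer\'edi $\varepsilon$-net argument: discretize the bilinear form $\langle Rx,y\rangle$ over the sphere, split it into ``light-pair'' and ``heavy-pair'' contributions, and bound each by Bernstein's inequality; this gives $\Vert R\Vert=O(\sqrt d)$ with a larger but explicit constant, still comfortably below $4$.) Putting the pieces together, on the intersection of the good events,
\[
\Vert A(I-M_{\sides})\Vert\ \le\ \sigma_2(\widetilde B)\ \le\ \tfrac{2}{d}\,(2+o(1))\sqrt d\ =\ \frac{4+o(1)}{\sqrt d}\ <\ \frac{8}{\sqrt d}\ =\ \frac{8}{\sqrt{n\rho}}
\]
for all large $n$.

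Finally, connectivity is automatic. If $\mathbb{G}_\bipartite(n,\rho)$ had at least two connected components, then for each component $C$ the functions $\mathbbm{1}_C$ and $\mathbbm{1}_{C\cap S_1}-\mathbbm{1}_{C\cap S_2}$ would be eigenfunctions of $A$ with eigenvalues $1$ and $-1$ respectively, producing $1$-eigenspace and $(-1)$-eigenspace of dimension at least $2$ each; after quotienting by the two-dimensional space $\im(M_{\sides})$ one would still have $\Vert A(I-M_{\sides})\Vert=1$, contradicting the bound just proved. Hence a.a.s.\ the graph is connected and, being connected, its second largest eigenvalue equals $\Vert A(I-M_{\sides})\Vert<8/\sqrt{n\rho}$. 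The one genuinely substantial step is the spectral-norm bound $\Vert R\Vert=O(\sqrt d)$ with constant below $4$, valid down to $d\asymp\log^6 n$; everything else is concentration and linear algebra.
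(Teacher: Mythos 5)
This statement is not proved in the paper: it is quoted verbatim as a citation to Ashcroft's \cite[Theorem A]{Ash}, so there is no in-paper argument for you to be measured against. Judged on its own terms, your sketch is the standard and essentially correct route to such a result. The reduction is sound: passing to the normalized biadjacency matrix $\widetilde B = D_1^{-1/2}BD_2^{-1/2}$, noting $\Spec(A)=\{\pm\sigma_i(\widetilde B)\}$, identifying the span of the degree-weighted side indicators with the top singular pair, and then using $\sigma_k(XBY)\le\Vert X\Vert\,\Vert Y\Vert\,\sigma_k(B)$ together with Weyl's inequality against the rank-one matrix $\mathbb{E}B=\rho J$ to get $\sigma_2(\widetilde B)\le\frac{2}{d}\Vert B-\mathbb{E}B\Vert$ on the degree-concentration event. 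Your derivation of connectivity from the spectral bound (any second component would force $\Vert A(I-M_{\sides})\Vert=1$) is also fine.

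The one place where you are only pointing rather than proving is the claim $\Vert B-\mathbb{E}B\Vert\le C\sqrt{d}$ with $C<4$ a.a.s.\ once $d\ge\log^6 n$, and you correctly flag this as the genuinely substantial step. The F\"uredi--Koml\'os/Vu trace method or the Kahn--Szemer\'edi $\varepsilon$-net argument do deliver this in the stated range (and more modern formulations, such as Bandeira--van Handel type bounds $\mathbb{E}\Vert R\Vert\lesssim\sqrt{d}+\sqrt{\log n}$, make the margin below $4$ completely comfortable since $\sqrt{d}=\log^{3}n\gg\sqrt{\log n}$), but as written your argument only names these tools. If you wanted to make the proposal self-contained you would need to actually carry out one of those counts, which is where almost all the technical work lives. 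As a blueprint for a proof of the cited theorem, though, the structure is correct and matches what one would expect to find in \cite{Ash}.
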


\begin{lemma}
\label{degree dist lemma}
Let $0 \leq \alpha <1$ be a constant. Assume $\rho (n) \geq  \frac{C}{n^\alpha}$ for some positive constant $C$. Then
$$\lim_n \mathbb{P} (\vert m(v) - n \rho (n) \vert \geq \frac{n \rho (n)}{n^{\frac{1-\alpha}{3}}} \text{ for some } v \text{ in } \mathbb{G}_\bipartite (n, \rho) ) = 0.$$
\end{lemma}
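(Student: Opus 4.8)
The plan is to observe that the degree of each vertex of $\mathbb{G}_{\bipartite}(n,\rho)$ is a binomial random variable and then combine a standard multiplicative Chernoff bound with a union bound over the $2n$ vertices. First I would fix a vertex $v$; by symmetry we may assume $v \in S_1$. Since each of the $n$ potential edges joining $v$ to $S_2$ is present independently with probability $\rho(n)$, the degree $m(v)$ is distributed as $\mathrm{Binomial}(n,\rho(n))$, so $\mathbb{E}[m(v)] = n\rho(n) =: \mu$. Setting $t = n^{-(1-\alpha)/3}$, the event appearing in the statement, restricted to this $v$, is precisely $\{|m(v)-\mu| \geq t\mu\}$, and note $0 < t \leq 1$ since $\alpha < 1$. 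The multiplicative Chernoff bound then gives
$$\mathbb{P}\bigl(|m(v)-\mu| \geq t\mu\bigr) \leq 2\exp\left(-\frac{t^2\mu}{3}\right).$$

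Next I would insert the hypothesis $\rho(n) \geq C n^{-\alpha}$, which yields $\mu \geq C n^{1-\alpha}$ and hence
$$t^2\mu \;\geq\; n^{-2(1-\alpha)/3}\cdot C n^{1-\alpha} \;=\; C\, n^{(1-\alpha)/3},$$
so that $\mathbb{P}(|m(v)-\mu| \geq t\mu) \leq 2\exp\bigl(-\tfrac{C}{3} n^{(1-\alpha)/3}\bigr)$ for every vertex $v$. A union bound over all $2n$ vertices of $\mathbb{G}_{\bipartite}(n,\rho)$ then gives
$$\mathbb{P}\bigl(|m(v)-\mu| \geq t\mu \text{ for some } v\bigr) \;\leq\; 4n\exp\left(-\frac{C}{3} n^{(1-\alpha)/3}\right),$$
and since $(1-\alpha)/3 > 0$ the quantity $n^{(1-\alpha)/3}$ grows faster than $\log n$, so the right-hand side tends to $0$ as $n \to \infty$, which is the assertion.

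This is a routine concentration argument and there is no real obstacle; the only point requiring a moment's care is the choice of the deviation scale $t = n^{-(1-\alpha)/3}$. It must be small enough that $t^2\mu$ is a genuine positive power of $n$, so that the union-bound factor $2n$ is absorbed by the exponential, while at the same time $t \to 0$ so that the conclusion is the meaningful statement that $m(v)$ is close to its mean. The exponent $\tfrac13$ is exactly what balances these two requirements against the bound $\mu \geq C n^{1-\alpha}$.
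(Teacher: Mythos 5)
Your proof is correct and takes essentially the same approach as the paper: identify $m(v)$ as $\mathrm{Binomial}(n,\rho(n))$, apply the multiplicative Chernoff bound with deviation parameter $\delta = n^{-(1-\alpha)/3}$, substitute the hypothesis $\rho(n)\ge Cn^{-\alpha}$ to get the tail bound $2\exp(-\tfrac{C}{3}n^{(1-\alpha)/3})$, and finish by a union bound over the $2n$ vertices, obtaining exactly the same final bound $4n\exp(-\tfrac{C}{3}n^{(1-\alpha)/3})\to 0$.
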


\begin{proof}
For every vertex $v$, $m(v)$ is a binomial random variable.  For every $0< \delta < 1$ it holds by Chernoff bound that
$$\mathbb{P} (m(v) \geq (1+ \delta) n \rho (n)) \leq \exp (-\frac{\delta^2}{3} n \rho (n)),$$
$$\mathbb{P} (m(v) \leq (1- \delta) n \rho) \leq \exp (-\frac{\delta^2}{3} n \rho (n)).$$   

Thus if we choose $\delta = \frac{1}{n^{\frac{1-\alpha}{3}}}$, we have that for every large $n$
$$\mathbb{P} \left(\vert m(v) - n \rho (n) \vert \geq \frac{n \rho (n)}{n^{\frac{1-\alpha}{3}}} \right) \leq 2 \exp \left(-\frac{n \rho (n)}{3 n^{\frac{2(1-\alpha)}{3}}} \right) \leq  2 \exp \left( -\frac{C n^{\frac{1-\alpha}{3}}}{3} \right).$$
By union bound, the probability the some vertex $v$ has $\vert m(v) - n \rho (n) \vert \geq \frac{n \rho (n)}{n^{\frac{1-\alpha}{3}}} $ is less than
$4n \exp \left( -\frac{C n^{\frac{1-\alpha}{3}}}{3} \right)$ and this tends to $0$ as $n$ tends to infinity.
\end{proof}

After this set-up, we argue as in \cite{ALS}: Let $\Gamma \in \mathcal{M}^+ (m, \rho)$.  The relations of $\Gamma$ are of the form $s_i s_j s_k$ and thus the Cayley complex of $\Gamma$ is a two-dimensional simplicial complex on which $\Gamma$ acts transitively on the vertices.  The link of $e \in \Gamma$ is a bipartite graph with sides $S_0 = S, S_1 = S^{-1}$, where each relation $s_i s_j s_k$ gives rise to $3$ edges $\lbrace s_j, s_i^{-1} \rbrace,  \lbrace s_k, s_j^{-1} \rbrace, \lbrace s_i, s_k^{-1} \rbrace$. Thus the the link is composed of $3$ bipartite graphs $L_1, L_2, L_3$ on $S \cup S^{-1}$: 
\begin{enumerate}
\item The graph $L_1$ is composed of all the edges $\lbrace s_j, s_i^{-1} \rbrace$ where  $s_i s_j s_k$ is a relation.
\item The graph $L_2$ is composed of all the edges  $\lbrace s_k, s_j^{-1} \rbrace$ where  $s_i s_j s_k$ is a relation.
\item The graph $L_3$ is composed of all the edges  $\lbrace s_i, s_k^{-1} \rbrace$ where  $s_i s_j s_k$ is a relation.
\end{enumerate}
Note that we allow double edges, i.e., if $s_i s_j s_k, s_i s_j s_{k'}$ are two relations of $\Gamma$, then the edge $\lbrace s_j, s_i^{-1} \rbrace$ will appear twice.

The idea of \cite{ALS} is that each of these graphs behave like $\mathbb{G}_\bipartite (m, \rho ')$ where $\rho ' \geq \rho$ and thus by Lemma \ref{several graphs lemma} and Theorem \ref{second ev of bipart ER thm}, we can bound the second eigenvalue of the link. However, some analysis is needed to account for double edges and this will require Lemma \ref{degree dist lemma}.  To sum up, we will prove the following:
\begin{theorem}
\label{link of Cayley complex thm}
Let $\frac{3}{2} < \alpha < 2$, $C> 0$ be constants and $\rho (m) =\frac{C}{m^{\alpha}}$.  For a random group $\Gamma \in \mathcal{M}^+ (m, \rho)$, it holds a.a.s. that link of each vertex in the Cayley complex of $\Gamma$ is a connected graph and the second eigenvalue of the random walk on it is $< \frac{10}{\sqrt{C} m^{1- \frac{\alpha}{2}}}$.
\end{theorem}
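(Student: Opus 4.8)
The plan is to reduce to a single vertex link, recognise its three constituent bipartite graphs as (essentially) bipartite Erd\"{o}s-R\'{e}nyi graphs $\mathbb{G}_{\bipartite}$, and reassemble the estimates using Lemmas \ref{several graphs lemma} and \ref{graph deleting lemma}. Since $\Gamma$ acts transitively on the vertices of its Cayley complex, all vertex links are isomorphic, so it suffices to analyse the link $L$ of the identity. As described above, $L$ is a bipartite multigraph on $S\sqcup S^{-1}$ (each side of size $m$), and it is the edge-disjoint union $L=L_1\sqcup L_2\sqcup L_3$, where each relator $s_is_js_k\in R$ contributes exactly one edge to each $L_t$; since $R$ is formed by including each of the $m^3$ words $s_is_js_k$ independently with probability $\rho=\rho(m)=Cm^{-\alpha}$, for each fixed $t$ the multigraph $L_t$ is a random bipartite multigraph in which the multiplicity of any one of the $m^2$ potential edges is $\mathrm{Bin}(m,\rho)$, these multiplicities being independent. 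Set $\rho':=1-(1-\rho)^m$; since $m\rho=Cm^{1-\alpha}\to0$ we have $\rho'=m\rho\,(1+o(1))=Cm^{1-\alpha}(1+o(1))$. I will show that, a.a.s., each $L_t$ is connected, has all degrees within a factor $1\pm o(1)$ of $m\rho'$, and has second eigenvalue $<\frac{8}{\sqrt{m\rho'}}(1+o(1))$, and then combine the three pieces via Lemma \ref{several graphs lemma}.

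\emph{Analysis of one $L_t$.} Write $L_t=G_t\sqcup H_t$, where $G_t$ keeps one copy of each edge of positive multiplicity and $H_t$ collects the surplus copies. By the independence above, $G_t$ is distributed exactly as $\mathbb{G}_{\bipartite}(m,\rho')$. Because $\alpha<2$ one has $\rho'\ge \frac{\log^6 m}{m}$ for large $m$, so Theorem \ref{second ev of bipart ER thm} gives, a.a.s., that $G_t$ is connected with second eigenvalue $<\frac{8}{\sqrt{m\rho'}}=\frac{8}{\sqrt{C}\,m^{1-\alpha/2}}(1+o(1))$; and since $\rho'=\Theta(m^{-(\alpha-1)})$ with $\alpha-1\in[0,1)$, Lemma \ref{degree dist lemma} gives, a.a.s., that every degree in $G_t$ lies within a factor $1\pm m^{-(2-\alpha)/3}$ of $m\rho'$. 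For $H_t$, the degree of a fixed vertex is a sum of $m$ independent copies of $(\mathrm{Bin}(m,\rho)-1)_+$, whose mean equals $m\rho-\rho'=\tfrac{m^2\rho^2}{2}(1+o(1))$; hence $\mathbb{E}[\deg_{H_t}(v)]=\tfrac{m^3\rho^2}{2}(1+o(1))=\Theta(m^{3-2\alpha})\to0$ (using $\alpha>\tfrac32$), and a moment/Chernoff bound combined with a union bound over the $2m$ vertices shows that, a.a.s., $\max_v\deg_{H_t}(v)\le t_0$ for a constant $t_0=t_0(\alpha)$. Consequently $\deg_{H_t}(v)/\deg_{G_t}(v)=O(m^{-(2-\alpha)})$ uniformly, and $\deg_{L_t}(v)=(1+o(1))\,m\rho'$ uniformly.

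\emph{Perturbation and reassembly.} Adding a uniformly sparse graph cannot spoil a spectral gap: by a Dirichlet-form argument parallel to the proof of Lemma \ref{graph deleting lemma} (comparing the quadratic forms of the random walks on $G_t$ and on $L_t=G_t\sqcup H_t$, and correcting the mean via $\sum_v m_{L_t}(v)\phi(v)=0$), if $\deg_{H_t}(v)/\deg_{G_t}(v)\le\varepsilon$ for every $v$ then the second eigenvalue of $L_t$ exceeds that of $G_t$ by at most $O(\varepsilon)$. With the estimates above, this gives, a.a.s., that every $L_t$ is connected, has all degrees within a factor $1\pm o(1)$ of the common value $d=m\rho'$, and has second eigenvalue $<\frac{8}{\sqrt{C}\,m^{1-\alpha/2}}(1+o(1))$. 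Applying Lemma \ref{several graphs lemma} to the edge-disjoint graphs $L_1,L_2,L_3$ (common target degree $d=m\rho'$, irregularity $\varepsilon=o(1)$) then bounds the second eigenvalue of $L$ by $\frac{8}{\sqrt{C}\,m^{1-\alpha/2}}(1+o(1))+\frac{16\varepsilon^2}{(1-\varepsilon)^4}$; since both the perturbation term $O(m^{-(2-\alpha)})$ and the combination term $O(m^{-2(2-\alpha)/3})$ are of smaller order than $\Theta(m^{-(2-\alpha)/2})=\frac{1}{\sqrt{C}\,m^{1-\alpha/2}}$, the bound is $\frac{8+o(1)}{\sqrt{C}\,m^{1-\alpha/2}}<\frac{10}{\sqrt{C}\,m^{1-\alpha/2}}$ for $m$ large. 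As all the exceptional events above have probability tending to $0$, this (together with connectedness of $L\supseteq G_t$) proves the theorem.

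\emph{Main obstacle.} The delicate step, and the reason for the hypothesis $\alpha>\tfrac32$, is the uniform control of the surplus edges $H_t$: in the range $\tfrac32<\alpha<2$ the expected number of multi-edges of $L_t$ grows (it is $\Theta(m^{4-2\alpha})$), so they cannot simply be discarded, yet they must be shown to be spread out enough --- with $O(1)$ surplus degree at every vertex --- that the perturbation they cause stays well inside the slack between the constant $8$ delivered by Theorem \ref{second ev of bipart ER thm} and the target constant $10$. Keeping every error term (the Binomial correction $\rho'\ne m\rho$, the degree irregularity from Lemma \ref{degree dist lemma}, the $H_t$-perturbation, and the combination loss in Lemma \ref{several graphs lemma}) of order $o(m^{-(1-\alpha/2)})$ is precisely where $\alpha>\tfrac32$ and the specific exponents appearing in Lemmas \ref{degree dist lemma} and \ref{several graphs lemma} are used.
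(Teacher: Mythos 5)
Your proof is correct and follows essentially the same route as the paper: strip the multi-edge surplus from each $L_t$ to reveal a bipartite Erd\H{o}s--R\'enyi graph $\mathbb{G}_{\bipartite}(m,\rho')$ with $\rho'=1-(1-\rho)^m$, invoke Theorem~\ref{second ev of bipart ER thm} and Lemma~\ref{degree dist lemma} for the spectrum and degree control, bound the surplus degree by an $m$-independent constant using $\alpha>\tfrac32$, and reassemble via Lemmas~\ref{several graphs lemma} and \ref{graph deleting lemma}. The only deviations are cosmetic: you reattach the surplus to each $L_t$ before combining the three sides, while the paper combines the simple parts first and reattaches all the surplus at the end; your moment bound on the surplus degree replaces the paper's direct union bound (multiplicity $\le 3$, at most $\lfloor 1/(2\alpha-3)\rfloor+1$ multi-edges per vertex); and, as you correctly note, the perturbation step really needs an ``adding sparse edges'' variant of Lemma~\ref{graph deleting lemma} rather than the stated ``deleting'' direction -- the Dirichlet-form argument goes through both ways, and the paper cites the lemma in the same loose fashion.
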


\begin{proof}
For clarity, in this proof we will denote the degree of a vertex by $w (v)$ and not $m (v)$, since the letter $m$ already appears as a parameter of the random group.

We first claim that a.a.s., for every $i=1,2,3$ each edge in $L_i$ appears with multiplicity $\leq 3$. Indeed, the probability that a specific edge appear $4$ times or more is bounded by $m^4 \rho (m)^4$. By union bound, the probability that some edge appears $4$ times or more is bounded by $m^6   \rho (m)^6 \leq C^4 m^{6-4 \alpha}$ and this tends to $0$ since $\alpha > \frac{3}{2}$.

Second,  we claim that a.a.s, for every for every $i=1,2,3$ each vertex in $L_i$ has at most $K = \lfloor \frac{1}{2 \alpha - 3} \rfloor + 1$  double/triple edges connected to it.  Indeed, the probability that a specific vertex has $K$ double/triple edges connected to it is bounded by $m^{3K} \rho (m)^{2K}$. By union bound, the probability that some vertex has $K$ double/triple edges connected to it is bounded by $2m^{3K+1} \rho (m)^{2K} \leq 2 C^{2K} m^{3K+1-2K \alpha}$ and this tends to $0$ since $K > \frac{1}{2 \alpha - 3}$.

Let $L_i '$ be the graph obtained from $L_i$ by deleting multiple edges. Then a.a.s., for every $v \in S \cup S^{-1}$, the degree of $v$ in $L_i '$ differ than the degree of $v$ in $L_i$ by at most $2 K$ (which is constant and does not depend on $m$).  

Each of the $L_i '$ is a random bipartite Erd\"{o}s-R\'{e}nyi graph $\mathbb{G}_\bipartite (m, \rho ')$ with 
$$\rho ' (m) = 1- (1-\rho (m))^m \geq \frac{C}{m^{\alpha -1}}$$
for every large $m$.

By Theorem \ref{second ev of bipart ER thm}, it holds a.a.s. for every $i =1,2,3$ that $L_i '$ is connected and the second eigenvalue of the random walk operator on it is bounded from above by $\frac{8}{ \sqrt{m \rho ' (m)}} \leq \frac{8}{\sqrt{C} m^{1- \frac{\alpha}{2}}}$.  By Lemma \ref{degree dist lemma}, for every $i$ and every vertex in $L_i '$,  it holds a.a.s. that
$$\vert w_i '(v) - m \rho ' (m) \vert \leq \frac{m \rho ' (m)}{m^{\frac{2-\alpha}{3}}},$$
where $w_i '$ denotes the degree of $v$ in $L_i '$. Thus, by Lemma \ref{several graphs lemma}, it holds a.a.s. that for every large $m$ the second largest eigenvalue of the random walk on $L_1 ' \cup L_2 ' \cup L_3 '$ is bounded from above by 
$$ \frac{8}{\sqrt{C} m^{1- \frac{\alpha}{2}}} + \frac{20}{m^{\frac{4}{3}(1-\frac{\alpha}{2})}} <  \frac{9}{\sqrt{C} m^{1- \frac{\alpha}{2}}} .$$

Note that by the discussion above $L_1 \cup L_2 \cup L_3$ is a union of $L_1 ' \cup L_2 ' \cup L_3 '$ and the graph $H$ of the deleted edges.  Also note that a.a.s.  it holds that every vertex $v$, $H$ has at most $6 K = 6 \lfloor \frac{1}{2 \alpha - 3} \rfloor + 6$ edges connected to it. Therefore if we denote $w'$ to be the degree of a vertex is $ L_1  ' \cup L_2 ' \cup L_3 '$ and $w''$ to be the degree of a vertex in $H$, it holds a.a.s.  for every large $m$ that 
$$\frac{w'' (v)}{w' (v)} \leq \frac{6K}{3 ( m \rho ' (m) - \frac{m \rho ' (m)}{m^{\frac{2-\alpha}{3}}})} \leq \frac{4K}{C m^{2 - \alpha}}.$$
Thus, from Lemma \ref{graph deleting lemma} it follows that a.a.s. the second largest eigenvalue of the random walk on $L_1 \cup L_2 \cup L_3 $ is bounded from above by 
$$ \frac{9}{\sqrt{C} m^{1- \frac{\alpha}{2}}} + \frac{16K}{C m^{2 - \alpha}} < \frac{10}{\sqrt{C} m^{1- \frac{\alpha}{2}}}$$
as needed.
\end{proof}

After this, we can prove the following Theorem:

\begin{theorem}
\label{f.p. for M^+ model}
Let $\frac{3}{2} < \alpha < 2, C >0$ be constants and $\rho (m) =  \frac{C}{m^{\alpha}}$.  Define a class of Banach spaces $\mathcal{E}_{\mathcal{M}^+} (\alpha, C,m)$ as the union of all the classes $\mathcal{E}^{\text{u-curved}}_\omega$ for which $\omega (\frac{10}{\sqrt{C} m^{1- \frac{\alpha}{2}}}) \leq \frac{1}{26}$. 

Then $\mathcal{M}^+ (m, \rho)$  has property $(F \mathcal{E}_{\mathcal{M}^+} (\alpha, C,m))$ a.a.s.  In particular:
\begin{itemize}
\item For any uniformly curved space $\B$,  $\mathcal{M}^+ (m, \rho)$ has property $(F \B)$ a.a.s. 
\item For $ \theta_0 =  \frac{\log (26)}{(1-\frac{\alpha}{2}) \log (m) + \frac{1}{2} \log (C) - \log (10)}$,   $\mathcal{M}^+ (m, \rho)$ has property $(F \mathcal{E}_{\theta_0})$ a.a.s.
\item For $2 \leq p \leq  \frac{2 (1-\frac{\alpha}{2}) \log (m) +  \log (C) - 2 \log (10)}{\log (26)}$,  $\mathcal{M}^+ (m, \rho)$ has property $(F L^p)$ a.a.s.
\end{itemize}
\end{theorem}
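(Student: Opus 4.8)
The plan is to put the Banach \.Zuk criterion of Theorem \ref{Zuk type thm} to work on the Cayley $2$-complex of a random group in $\mathcal{M}^+(m,\rho)$, feeding in the scalar spectral bound of Theorem \ref{link of Cayley complex thm} through the uniformly curved estimate of Corollary \ref{norm bound on rw on uc coro - bipartite}; here ``a.a.s.''\ means probability $\to 1$ as $m\to\infty$, so claims valid for all large $m$ are permitted. First I would set up the geometry. Write $\Gamma=\langle S\mid R\rangle\in\mathcal{M}^+(m,\rho)$ with $S\cap S^{-1}=\emptyset$, $|S|=m$, and $R$ a set of positive length-$3$ words, and let $X$ be its Cayley $2$-complex; since the relators $s_is_js_k$ have length $3$ their triangles are non-degenerate, so $X$ is a $2$-dimensional simplicial complex on which $\Gamma$ acts by simplicial automorphisms, freely, cocompactly, and transitively on $X(0)$. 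The homomorphism $c\colon\Gamma\to\mathbb{Z}/3$ sending every generator in $S$ to $1$ is well defined (all relators have length $3$), and colouring $X(0)$ by $c$ makes $X$ a partite $2$-complex: every triangle, coming from some $s_is_js_k$, carries one vertex of each colour. Let $\Gamma_0=\ker c$, a subgroup of index $3$; then $\Gamma_0$ acts on $X$ type-preservingly, freely (so all cell stabilizers are trivial, hence compact open in the discrete topology), and cocompactly (the quotient has three vertices), and by transitivity of $\Gamma$ on $X(0)$ every vertex link of $X$ is isomorphic to the single finite bipartite graph $L_1\cup L_2\cup L_3$ on $S\sqcup S^{-1}$ described just before Theorem \ref{link of Cayley complex thm}.

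Next I would check the remaining hypotheses of Theorem \ref{Zuk type thm}, used with $n=2$ (so $8n-3=13$ and $X(n-2)=X(0)$). The only non-formal ones are gallery connectedness and connectedness of the $1$-dimensional links, and both are a.a.s.\ consequences of Theorem \ref{link of Cayley complex thm}: a.a.s.\ every vertex link is a connected graph on its full vertex set $S\sqcup S^{-1}$, which forces every edge of $X$ to lie in a triangle, so $X$ is pure $2$-dimensional, and gallery connectedness then follows from connectedness of the Cayley graph together with connectedness of each vertex link.

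Now I would convert the spectral bound. By Theorem \ref{link of Cayley complex thm} a.a.s.\ the second eigenvalue of the simple random walk on each vertex link is $<\lambda_0:=\tfrac{10}{\sqrt{C}\,m^{1-\alpha/2}}$; being connected bipartite, each link has symmetric random walk spectrum with $\pm1$ the only trivial eigenvalues, so a.a.s.\ it is a one-sided $\lambda$-spectral expander with $\lambda<\lambda_0<1$ for $m$ large, i.e.\ $\Vert A(I-M_{\sides})\Vert<\lambda_0$. Fix $\B\in\mathcal{E}_{\mathcal{M}^+}(\alpha,C,m)$, say $\B\in\mathcal{E}^{\text{u-curved}}_\omega$ with $\omega(\lambda_0)\le\tfrac{1}{26}$. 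Applying Corollary \ref{norm bound on rw on uc coro - bipartite} to each vertex link and using monotonicity of $\omega$,
$$
\max_{\tau\in D(0)}\lambda_{\tau,\bipartite}^{\B}\;\le\;2\,\omega(\lambda)\;\le\;2\,\omega(\lambda_0)\;\le\;\frac{1}{13}\;=\;\frac{1}{8n-3}
$$
a.a.s., and $\lambda<\lambda_0<\tfrac{1}{13}$ for $m$ large yields $\mathbb{C}\in\mathcal{E}_X$. Hence Theorem \ref{Zuk type thm} gives that a.a.s.\ $\Gamma_0$ has property $(F\B)$ for every $\B\in\mathcal{E}_{\mathcal{M}^+}(\alpha,C,m)$, i.e.\ property $(F\mathcal{E}_{\mathcal{M}^+}(\alpha,C,m))$; and since these spaces are super-reflexive (uniformly curved spaces are), Proposition \ref{f.p. is equivalent to f.i. prop} transfers property $(F\B)$ to the index-$3$ overgroup $\Gamma$.

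Two small points will need care in the full argument. The first is the strict-versus-non-strict inequality at the threshold $\tfrac{1}{13}$: for the concrete moduli it is genuinely strict, e.g.\ for $\omega(t)=t^{\theta_0}$ one has $2\lambda^{\theta_0}<2\lambda_0^{\theta_0}=\tfrac{1}{13}$ because $\lambda<\lambda_0$, while for a general monotone $\omega$ one should instead invoke the $\mathcal{E}_{X,\varepsilon}$-part of Theorem \ref{Zuk type thm} (robust property (T)), using that the estimate of Theorem \ref{link of Cayley complex thm} leaves a fixed multiplicative gap below $\lambda_0$ for all large $m$. The second is the combinatorial identification of the links of the partite complex with $L_1\cup L_2\cup L_3$, which is where most of the bookkeeping lives; the essential analytic input — the spectral gap of the link — is already supplied by Theorem \ref{link of Cayley complex thm}, so I do not expect a genuinely new obstacle. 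The three displayed consequences then follow by unwinding the definition of $\mathcal{E}_{\mathcal{M}^+}(\alpha,C,m)$: a uniformly curved $\B$ satisfies $\Vert T\otimes\id_{\B}\Vert\le\tfrac{1}{26}$ once $T$ is fully contractive of small enough norm, so $\B\in\mathcal{E}_{\mathcal{M}^+}(\alpha,C,m)$ for all large $m$; by Corollary \ref{theta-hil is in uni-curv cor}, $\mathcal{E}_{\theta_0}\subseteq\mathcal{E}^{\text{u-curved}}_{\omega(t)=t^{\theta_0}}$ and $\theta_0=\tfrac{\log 26}{(1-\alpha/2)\log m+\frac12\log C-\log 10}$ is exactly the value making $\lambda_0^{\theta_0}=\tfrac{1}{26}$; and $L^p$ with $p\ge2$ is strictly $\tfrac{2}{p}$-Hilbertian, hence lies in $\mathcal{E}_{\theta_0}$ precisely for $p\le 2/\theta_0=\tfrac{2(1-\alpha/2)\log m+\log C-2\log 10}{\log 26}$.
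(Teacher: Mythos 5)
Your proposal is correct and follows essentially the same route as the paper: build the Cayley $2$-complex, feed the scalar spectral bound of Theorem \ref{link of Cayley complex thm} through Corollary \ref{norm bound on rw on uc coro - bipartite}, and invoke Theorem \ref{Zuk type thm}. You are in fact slightly more careful than the paper's own proof on one point — you pass to the type-preserving index-$3$ subgroup $\Gamma_0=\ker c$ before applying the \.Zuk criterion and then transfer to $\Gamma$ via Proposition \ref{f.p. is equivalent to f.i. prop}, whereas the paper applies the criterion to $\Gamma$ directly (whose action on the Cayley complex is not type-preserving) and leaves this finite-index reduction implicit.
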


\begin{proof}
Let $\Gamma \in \mathcal{M}^+ (m, \rho)$ be a random group. Denote by $X$ the Cayley complex of $X$. This is a $2$-dimensional simplicial complex on which $\Gamma$ acts freely and transitively on vertices and the links of $X$ are bipartite graphs. By Theorem \ref{link of Cayley complex thm} it holds a.a.s. that the second largest eigenvalue of the random walk on the link of a vertex in $X$ is $< \frac{10}{\sqrt{C} m^{1- \frac{\alpha}{2}}}$. Thus by Corollary \ref{norm bound on rw on uc coro - bipartite} it holds a.a.s. that for every $\B \in \mathcal{E}_{\mathcal{M}^+} (\alpha, C,m)$ and every $v \in X(0)$
$$\lambda_{X_v,\bipartite}^\B < 2 \omega \left( \frac{10}{\sqrt{C} m^{1- \frac{\alpha}{2}}} \right)\leq \frac{1}{13}.$$
Thus by Theorem \ref{Zuk type thm} (noting that $\mathbb{C} \in \mathcal{E}_{\mathcal{M}^+} (\alpha, C,m)$) it holds that a.a.s. that $\Gamma$ has property $(F \mathcal{E}_{\mathcal{M}^+} (\alpha, C,m))$. 

Next, we derive the particular statements stated above. 

First, for every uniformly curved space $\B$ it holds that $\B \in \mathcal{M}^+ (m, \rho)$ given that $m$ is large enough. Thus for every uniformly curved space it holds a.a.s. that $\mathcal{M}^+ (m, \rho)$ has property $(F \B)$. 

Second,  by Corollary \ref{theta-hil is in uni-curv cor} it holds for every $0 < \theta_0 \leq 1$  that $\mathcal{E}_{\theta_0} \subseteq \mathcal{E}^{\text{u-curved}}_{\omega (t) = t^{\theta_0}}$.  Thus if 
\begin{equation}
\label{theta0 ineq}
\left( \frac{10}{\sqrt{C} m^{1- \frac{\alpha}{2}}} \right)^{\theta_0} = \frac{1}{26}
\end{equation}
it follows that $\mathcal{E}_{\theta_0} \subseteq \mathcal{E}_{\mathcal{M}^+} (\alpha, C,m)$ and therefore a.a.s.  $\mathcal{M}^+ (m, \rho)$ has property $(F \mathcal{E}_{\theta_0})$.  The equation \eqref{theta0 ineq} is equivalent to 
$$\theta_0 = \frac{\log (26)}{(1-\frac{\alpha}{2}) \log (m) + \frac{1}{2} \log (C) - \log (10)}$$
as needed.

Last, for every $2 \leq p < \infty$, every $L^p$ space is $\frac{2}{p}$-strictly Hilbertian. Thus for $\theta_0 =\frac{\log (26)}{(1-\frac{\alpha}{2}) \log (m) + \frac{1}{2} \log (C) - \log (10)}$ as above, if $\frac{2}{p} \geq \theta_0$ it follows that 
$$\frac{2}{p} \leq \frac{\log (26)}{(1-\frac{\alpha}{2}) \log (m) + \frac{1}{2} \log (C) - \log (10)}$$
it follows that a.a.s. $\mathcal{M}^+ (m, \rho)$ has property $(F L^p)$. The condition $\frac{2}{p} \geq \theta_0$ is equivalent to
$$p \leq  \frac{2 (1-\frac{\alpha}{2}) \log (m) +  \log (C) - 2 \log (10)}{\log (26)}$$
as needed.

\end{proof}

Combining the above Theorem and Theorem \ref{reduction thm} allows us to deduced fixed point properties for random groups in the Gromov density model:
\begin{theorem}
\label{f.p. density model}
Let $\frac{1}{3} <d < \frac{1}{2}$, $k \geq 2$ be constants. Define a class of Banach spaces $\mathcal{E}_{\mathcal{D}} (k,l,d)$ as the union of all the classes $\mathcal{E}^{\text{u-curved}}_\omega$ for which $\omega \left(\frac{20 \sqrt{2}}{(2k-1)^{\frac{l}{6} (3d - 1)}} \right) \leq \frac{1}{26}$. 

Then for $l$ divisible by $3$, $\mathcal{D} (k,l,d)$ has property $(F \mathcal{E}_{\mathcal{D}} (k,l,d))$ a.a.s.  In particular:
\begin{itemize}
\item For any uniformly curved space $\B$,  $\mathcal{D} (k,l,d)$ has property $(F \B)$ a.a.s.   given that $l$ is divisible by $3$.
\item For $ \theta_0 =  \frac{\log (26)}{\frac{l}{6} (3d-1) \log (2k-1) - \log (20 \sqrt{2})}$,   $\mathcal{D} (k,l,d)$ has property $(F \mathcal{E}_{\theta_0})$ a.a.s.  given that $l$ is divisible by $3$.
\item For $2 \leq p \leq   \frac{l (d-\frac{1}{3}) \log (2k-1) - 2 \log (20 \sqrt{2})}{\log (26)}$,  $\mathcal{D} (k,l,d)$ has property $(F L^p)$ a.a.s.  given that $l$ is divisible by $3$.
\end{itemize}
\end{theorem}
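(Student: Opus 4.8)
The plan is to read off Theorem~\ref{f.p. density model} as the specialization of Theorem~\ref{f.p. for M^+ model} dictated by the reduction in Theorem~\ref{reduction thm}, and then deduce the three bulleted consequences from general facts about the classes $\mathcal{E}^{\text{u-curved}}_\omega$, $\mathcal{E}_{\theta_0}$ and $L^p$. All the substance lives in Theorems~\ref{Zuk type thm}, \ref{reduction thm} and \ref{f.p. for M^+ model}; what is left is parameter bookkeeping, with the only mild care being that the Banach-space class here genuinely depends on $l$ — which is exactly why Theorem~\ref{reduction thm} was set up for a varying family $\mathcal{E}_l$.

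\emph{Matching the parameters.} Theorem~\ref{reduction thm} reduces property $(F\mathcal{E}_l)$ for $\mathcal{D}(k,l,d)$ (with $l$ divisible by $3$) to $(F\mathcal{E}_l)$ for $\mathcal{M}^+(m(l),\rho)$ with $m(l)=\tfrac12(2k-1)^{l/3}$ and $\rho(m)=\tfrac{1}{4m^{3(1-d)}}$. So I would set $\alpha=3(1-d)$ and $C=\tfrac14$; the hypothesis $\tfrac13<d<\tfrac12$ is precisely what yields $\tfrac32<\alpha<2$, which is what Theorem~\ref{f.p. for M^+ model} requires. The one computation to carry out is to bound the quantity $\tfrac{10}{\sqrt{C}\,m(l)^{1-\alpha/2}}$ that controls $\mathcal{E}_{\mathcal{M}^+}(\alpha,C,m(l))$: since $\sqrt{C}=\tfrac12$ and $1-\tfrac{\alpha}{2}=\tfrac{3d-1}{2}$,
\[
\frac{10}{\sqrt{C}\,m(l)^{1-\alpha/2}} = 20\cdot 2^{(3d-1)/2}\,(2k-1)^{-\frac{l}{6}(3d-1)} \le \frac{20\sqrt2}{(2k-1)^{\frac{l}{6}(3d-1)}},
\]
using $0<3d-1<\tfrac12$ so that $2^{(3d-1)/2}<\sqrt2$. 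Since every admissible $\omega$ is monotone increasing, this inequality forces the inclusion of classes $\mathcal{E}_{\mathcal{D}}(k,l,d)\subseteq\mathcal{E}_{\mathcal{M}^+}\big(\alpha,C,m(l)\big)$.

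\emph{The transfer.} By Theorem~\ref{f.p. for M^+ model}, $\mathcal{M}^+(m(l),\rho)$ has property $(F\mathcal{E}_{\mathcal{M}^+}(\alpha,C,m(l)))$ a.a.s.\ as $m(l)\to\infty$, i.e.\ as $l\to\infty$; by the inclusion of classes and the fact that $(F\mathcal{E})$ for a larger class is formally stronger, $\mathcal{M}^+(m(l),\rho)$ also has $(F\mathcal{E}_{\mathcal{D}}(k,l,d))$ a.a.s. Feeding $\mathcal{E}_l:=\mathcal{E}_{\mathcal{D}}(k,l,d)$ into Theorem~\ref{reduction thm} gives
\[
\mathbb{P}\big(\mathcal{D}(k,l,d)\ \text{has}\ (F\mathcal{E}_{\mathcal{D}}(k,l,d))\big)\ \ge\ \mathbb{P}\big(\mathcal{M}^+(m(l),\rho)\ \text{has}\ (F\mathcal{E}_{\mathcal{D}}(k,l,d))\big)-\frac{6}{(2k-1)^{l/2}},
\]
and letting $l\to\infty$ through multiples of $3$ the right-hand side tends to $1$, which is the first assertion.

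\emph{The bulleted consequences.} For a uniformly curved $\B$, put $\omega_\B(\delta)=\sup\{\,\|T\otimes\id_\B\|:T\ \text{fully contractive},\ \|T\|\le\delta\,\}$; then $\B\in\mathcal{E}^{\text{u-curved}}_{\omega_\B}$ with $\omega_\B(t)\to0$ as $t\to0^+$, so for $l$ large enough $\omega_\B\big(\tfrac{20\sqrt2}{(2k-1)^{l(3d-1)/6}}\big)\le\tfrac1{26}$ and hence $\B\in\mathcal{E}_{\mathcal{D}}(k,l,d)$, giving $(F\B)$ a.a.s. For the $\theta_0$-statement, Corollary~\ref{theta-hil is in uni-curv cor} gives $\mathcal{E}_{\theta_0}\subseteq\mathcal{E}^{\text{u-curved}}_{\omega(t)=t^{\theta_0}}$, and the requirement $\big(\tfrac{20\sqrt2}{(2k-1)^{l(3d-1)/6}}\big)^{\theta_0}\le\tfrac1{26}$ rearranges (for $l$ large, so the bracket below is positive) to $\theta_0\ge\frac{\log 26}{\frac{l}{6}(3d-1)\log(2k-1)-\log(20\sqrt2)}$, so the displayed threshold is exactly this value. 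For the $L^p$-statement, an $L^p$ space ($2\le p<\infty$) is strictly $\tfrac2p$-Hilbertian, hence $L^p\in\mathcal{E}_{\theta_0}$ whenever $\theta_0\le\tfrac2p$; rewriting $\tfrac2p\ge\theta_0$ and using $\tfrac{l}{3}(3d-1)=l(d-\tfrac13)$ yields $p\le\frac{l(d-\frac13)\log(2k-1)-2\log(20\sqrt2)}{\log 26}$. I do not expect a genuine obstacle; the only points to watch are the $l$-dependence of $\mathcal{E}_{\mathcal{D}}(k,l,d)$ (handled by the $\mathcal{E}_l$ formulation of Theorem~\ref{reduction thm}) and the elementary estimate $2^{(3d-1)/2}<\sqrt2$ used to obtain the clean constant $20\sqrt2$.
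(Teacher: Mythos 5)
Your proof is correct and follows the same route as the paper: specialize Theorem~\ref{f.p. for M^+ model} via Theorem~\ref{reduction thm} with $\alpha=3(1-d)$ and $C=\tfrac14$, verify the class inclusion $\mathcal{E}_{\mathcal{D}}(k,l,d)\subseteq\mathcal{E}_{\mathcal{M}^+}(\alpha,C,m(l))$ using monotonicity of $\omega$ and the bound $2^{(3d-1)/2}<\sqrt2$, and then read off the bullets exactly as in Theorem~\ref{f.p. for M^+ model}. The paper states this more tersely and leaves the arithmetic to the reader; you have filled in those details correctly.
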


\begin{proof}
Let $m = m(k,l) =  \frac{1}{2} (2k-1)^{\frac{l}{3}}$ and $\rho (m) = \frac{1}{4} \frac{1}{m^{3(1-d)}}$. We can write 
$\rho (m) = C  \frac{1}{m^{\alpha}}$ for $C =\frac{1}{4}, \alpha = 3(1-d)$.  Then with this $m$ and $\rho$,  the class $\mathcal{E}_{\mathcal{M}^+} (\alpha, C,m)$ defined in Theorem \ref{f.p. for M^+ model} contains the class $\mathcal{E}_{\mathcal{D}} (k,l,d)$ defined above.   By Theorem \ref{reduction thm} for $l$ divisible by $3$, $\mathcal{D} (k,l,d)$ has property $(F \mathcal{E}_{\mathcal{D}} (k,l,d))$ a.a.s.  


The particular statements are proven as in Theorem \ref{f.p. for M^+ model} and are left for the reader.
\end{proof}

\begin{remark}
By the proof of Theorem \ref{reduction thm} a similar Theorem can also be proved in the Gromov binomial model and this is left for the reader.
\end{remark}

\bibliographystyle{alpha}
\bibliography{bib1}
\Addresses
\end{document}